\documentclass[12pt,english]{oxarticle}
\usepackage{graphicx, float, array, xspace, amscd, amsmath, hyperref, amsfonts, amsthm, amssymb, latexsym, bbm, kbordermatrix, booktabs,mathrsfs,mathtools}
\usepackage[table,svgnames]{xcolor}
\usepackage[sort&compress, comma, square, numbers]{natbib}
\usepackage{tikz-cd}
\usetikzlibrary{cd}


\newtheorem{theorem}{Theorem}[section]
\newtheorem{lemma}[theorem]{Lemma}
\newtheorem{conjecture}[theorem]{Conjecture}
\newtheorem{remark}[theorem]{Remark}
\newtheorem{definition}[theorem]{Definition}
\newtheorem{proposition}[theorem]{Proposition}

\newtheorem{corollary}[theorem]{Corollary}

\DeclareFontFamily{OT1}{pzc}{}
\DeclareFontShape{OT1}{pzc}{m}{it}{<-> s * [1.200] pzcmi7t}{}
\DeclareMathAlphabet{\mathpzc}{OT1}{pzc}{m}{it}


\DeclareFontFamily{U}{wncy}{}
\DeclareFontShape{U}{wncy}{m}{n}{<->wncyr10}{}
\DeclareSymbolFont{mcy}{U}{wncy}{m}{n}
\DeclareMathSymbol{\sha}{\mathord}{mcy}{"58}

\def\Q{\mathbb{Q}}
\def\DM{ \textbf{DM}_{\text{gm}}(\mathbb{Q},\mathbb{Q}) }
\def\DTM{ \textbf{DTM}_{\Q} }

\def\MHS{ \textbf{MHS}_{\Q}}
\def\MTM{ \textbf{TM}_{\Q}}
\def\Re{\mathfrak{R}}

\usepackage{authblk}
\title{Mirror symmetry, mixed motives, and $\zeta(3)$}
\author[1]{Minhyong Kim}
\author[2]{Wenzhe Yang}
\affil[1]{Mathematical Institute, University of Oxford}
\affil[2]{Stanford Institute for Theoretical Physics}

\hfuzz=1pt
\vfuzz=2pt



\numberwithin{equation}{section}
\proofmodefalse

\setcounter{section}{-1}

\begin{document}
%
%
%
%
%
%
%
%
%
%
%
\maketitle
 		
 \begin{abstract}		 	
In this paper, we present an application of mirror symmetry to arithmetic geometry. The main result is the computation of the period of a mixed Hodge structure, which lends evidence to its expected motivic origin.
 More precisely, given a mirror pair $(M,W)$ of Calabi-Yau threefolds, the prepotential of the complexified K\"ahler moduli space of $M$ admits an expansion with a constant term that is frequently of the form $$-3\, \chi (M) \,\zeta(3)/(2 \pi i)^3+r,$$
where $r \in \mathbb{Q}$ and $\chi(M)$ is the Euler characteristic of $M$. We focus on the mirror pairs for which the deformation space of the mirror threefold $W$ forms part of a one-parameter algebraic family $W_{\varphi}$ defined over $\mathbb{Q}$ and the large complex structure limit is a rational point. Assuming a version of the mirror conjecture, we  compute the limit mixed Hodge structure on $H^3(W_{\varphi})$ at the large complex structure limit. It turns out to have a direct summand expressible as an extension of $\mathbb{Q}(-3)$ by $\mathbb{Q}(0)$ whose isomorphism class can be computed in terms of the prepotential of $M$, and hence, involves $\zeta(3)$. By way of  Ayoub's works on the motivic nearby cycle functor, this reveals in precise form a connection between mirror symmetry and a  variant of the Hodge conjecture for mixed Tate motives.
 		
\end{abstract}

\section{Introduction}
Over 50 years of research on motives has continued to enrich virtually all areas of number theory and algebraic geometry with a wide-ranging supply of unifying themes as well as deep formulas. Nevertheless, Grothendieck's original vision, supplemented by Beilinson and Deligne \cite{BeilinsonHeight, Deligne-Soule}, whereby an abelian category of the motives over $\Q$  is supposed to provide the building blocks for  arithmetic varieties, has yet to be realised. So far the best approximation to this idea is the construction of a triangulated category $\textbf{DM}_{\text{gm}}(\mathbb{Q},\mathbb{Q})$ of the mixed motives \cite{LevineMM,VoevodskyMM} that has nearly all the properties conjectured for the derived category of the desired abelian category.  That is,  $\textbf{DM}_{\text{gm}}(\mathbb{Q},\mathbb{Q})$ is expected to carry a natural motivic $t$-structure, whose heart should be Grothendieck's abelian category, however the construction of such a motivic $t$-structure appears inaccessible for the time being. As far as an abelian category is concerned, the best constructed is that of the mixed Tate motives $\MTM$, the one whose semi-simplification consists only of the simplest possible pure motives $\Q(n),\,n \in \mathbb{Z}$. That is to say, the full triangulated subcategory $\DTM$ of $\DM$ generated by $\Q(n),\,n\in \mathbb{Z} $ {\em does} have a motivic $t$-structure whose heart is by definition the abelian category $\MTM$. However in fact, even the way in which the mixed Tate motives sit inside $\textbf{DM}_{\text{gm}}(\mathbb{Q},\mathbb{Q})$ remains rather mysterious. 

Here is a natural conjecture in this regard, which could be regarded as a generalised Hodge conjecture concerning the Hodge realisation functor
$$\Re: \DM \rightarrow D^b(\MHS)$$
from $\textbf{DM}_{\text{gm}}(\mathbb{Q},\mathbb{Q})$ to the derived category of mixed Hodge structures.

\textbf{Conjecture GH} \textit{Suppose a mixed Hodge-Tate structure $\mathbf{P}$ occurs as a direct summand of $H^q(\Re(\mathcal{N}))$ for some $q \in \mathbb{Z}$ and $\mathcal N \in \DM$, then $$\mathbf{P}\simeq \Re(\mathcal{M})$$ where $\mathcal{M}$ is a mixed Tate motive.
}

Here, a mixed Hodge-Tate structure is a mixed Hodge structure whose semi-simplification is a direct sum of $\mathbb{Q}(n)$ for various $n \in \mathbb{Z}$. Of course this conjecture is likely to be inaccessible in general using current day technologies. Nevertheless we might be concerned   with the possibility of testing it numerically in some sense. In this paper, we will focus on the following period-theoretic version conjecture:

\textbf{Conjecture GHP}  \textit{Suppose a mixed Hodge-Tate structure $\mathbf{P}$ occurs as a direct summand of $H^q(\Re(\mathcal{N}))$ for some $q \in \mathbb{Z}$ and $\mathcal N \in \DM$.
Suppose further that $\mathbf{P}$ is an extension of $\mathbb{Q}(0)$ by $\mathbb{Q}(n), n\geq 3$, then its image in $$ Ext^1_{\MHS}(\Q(0), \Q(n)) \simeq \mathbb{C}/(2 \pi i)^n\Q $$ is the coset of a rational multiple of $\zeta(n)$.
}

As we will explain later, Conjecture \textbf{GHP} follows from Conjecture {\bf GH} and the known computations of the periods of mixed Tate motives \cite{DeligneFund}. The main purpose of this paper is to construct mixed motives in the theory of the mirror symmetry of the Calabi-Yau threefolds, and show they satisfy the predictions of Conjecture \textbf{GHP}, establishing thereby a highly interesting connection between the theory of mixed motives and mirror symmetry.

Roughly speaking, mirror symmetry is a conjecture that predicts the existence of mirror pairs $(M, W)$ of Calabi-Yau threefolds such that the complexified K\"ahler moduli space of $M$ is isomorphic to an open subset of the complex moduli space of $W$. Furthermore, this open subset is given as a neighbourhood of a special boundary point of the complex moduli space of $W$ known as the large complex structure limit \cite{MarkGross,CoxKatz}. The complexified K\"ahler moduli space of $M$, denoted by $\mathscr{M}_K(M)$, is essentially the space whose points represent the K\"ahler structures on $M$, while the complex moduli space of $W$, denoted by $\mathscr{M}_C(W)$, is the space whose points represent the complex structures of $W$ \cite{MarkGross,CoxKatz}. The isomorphism between $\mathscr{M}_K(M)$ and a neighborhood of the large complex structure limit in $\mathscr{M}_C(W)$ is called the mirror map, which is constructed by the identifications of certain functions on $\mathscr{M}_K(M)$ with those on $\mathscr{M}_C(W)$ \cite{MarkGross,CoxKatz,PhilipXenia1}. Mirror symmetry is a very powerful tool in the study of algebraic geometry, as it allows one to transfer the questions that are very difficult on one side to considerably easier questions on the other side. Let's now give a brief description of $\mathscr{M}_K(M)$ based on \cite{MarkGross}. Recall that the Hodge diamond of $M$ is of the form
\begin{center} 
\begin{tabular}{ c c c c c c c }
 &  &  & 1 &  &  &  \\ 
 &  & 0&   & 0&  &  \\   
 & 0&  & \,\,\,\,$h^{11}$ &  & 0&   \\  
1&  & $h^{21}$&  &$h^{21}$ & & 1 \\ 
 & 0&  & \,\,\,\,$h^{11}$ &  & 0&   \\ 
 &  & 0&   & 0&  &  \\   
 &  &  & 1 &  &  &  \\
\end{tabular}
\end{center}
where $h^{11}:=\text{dim}_{\mathbb{C}}\,H^{1,1}(M)$ and $h^{21}:=\text{dim}_{\mathbb{C}}\,H^{2,1}(M)$. Notice that both $H^1(M,\mathbb{R})$ and $H^5(M,\mathbb{R})$ are zero. Define $H^{1,1}(M,\mathbb{R})$ by
\begin{equation}
H^{1,1}(M,\mathbb{R}):=H^2(M,\mathbb{R}) \cap H^{1,1}(M,\mathbb{C}),
\end{equation}
i.e. it consists of the elements of $H^2(M,\mathbb{R})$ that can be represented by real closed forms of the type $(1,1)$. The K\"ahler cone of $M$ is defined by 
\begin{equation}
\mathcal{K}_{M}:= \{ \omega \in H^{1,1}(M,\mathbb{R})\,|\, \,\omega \,\text{ can be represented by a K\"ahler form of }\,M \},
\end{equation}
which is an open subset of $H^{1,1}(M,\mathbb{R})$ \cite{MarkGross}. The complexified K\"ahler moduli space of $M$ is defined by \cite{MarkGross}
\begin{equation}
\mathscr{M}_K(M):=(H^2(M,\mathbb{R})+i\, \mathcal{K}_{M})/H^2(M,\mathbb{Z}).
\end{equation}
In this paper we will only consider the one-parameter mirror pairs $(M,W)$ of Calabi-Yau threefolds, i.e.
\begin{equation}
\text{dim}_{\mathbb{C}}\,H^{1,1}(M)=\text{dim}_{\mathbb{C}}\,H^{2,1}(W)=1,
\end{equation}
in which case the K\"ahler cone $\mathcal{K}_{M}$ of $M$ is the open ray $ \mathbb{R}_{> 0}$. Hence $\mathscr{M}_K(M)$ has a very simple description \cite{MarkGross}
\begin{equation}
\mathscr{M}_K(M)=(\mathbb{R}+i\, \mathbb{R}_{> 0})/\mathbb{Z}=\mathbb{H}/\mathbb{Z},
\end{equation}
where $\mathbb{H}$ is the upper half plane of $\mathbb{C}$. Now let $e$ be a basis of $H^2(M,\mathbb{Z})$ (modulo torsion) that lies in the K\"ahler cone $\mathcal{K}_{M}$ \cite{MarkGross}, then every point of $\mathscr{M}_K(M)$ is represented by $t\, e, \,t \in \mathbb{H}$, while $e\,t$ is equivalent to $e\,(t+1)$ under the quotient by $\mathbb{Z}$. Conventionally $t$ is called the flat coordinate of $\mathscr{M}_K(M)$ by physicists \cite{MarkGross, PhilipXenia1}. In mirror symmetry, the prepotential $\mathcal{F}$ on the K\"ahler side admits an expansion near $i\, \infty$ that is of the form \cite{PhilipXenia,PhilipXenia1}
\begin{equation} \label{eq:Prepotential}
\mathcal{F}=-\frac{1}{6}\, Y_{111}\, t^3 -\frac{1}{2}\, Y_{011}\,t^2-\frac{1}{2}\,Y_{001}\, t-\frac{1}{6}\,Y_{000}+\mathcal{F}^{\text{np}},
\end{equation}
where $\mathcal{F}^{\text{np}}$ is the non-perturbative instanton correction. Moreover, $\mathcal{F}^{\text{np}}$ is invariant under the translation $t \rightarrow t+1$ and it is exponentially small when $t \rightarrow i\, \infty$, i.e. it admits a series expansion in $\exp 2\pi i\,t$ of the form \cite{PhilipXenia1}
\begin{equation}
\mathcal{F}^{\text{np}}=\sum_{n=1}^{\infty} a_n \exp 2 \pi i \,nt.
\end{equation}
The coefficient $Y_{111}$ in \ref{eq:Prepotential} is the topological intersection number given by \cite{PhilipXenia, PhilipXenia1}
\begin{equation}
Y_{111}=\int _{M} e\wedge e \wedge e,
\end{equation}
which is a positive integer. By the Lefschetz (1,1)-theorem, $e$ can be represented by a divisor of $M$ and $Y_{111}$ is the triple intersection number of this divisor with itself. The coefficients $Y_{011}$ and $Y_{001}$ will be shown to be rational numbers by mirror symmetry. The coefficient $Y_{000}$ is certainly the most mysterious one and in all examples of mirror pairs where it has been computed, it is always of the form \cite{PhilipXenia}
\begin{equation} \label{eq:PhysicistsY000}
Y_{000}=-3\, \chi (M)\, \frac{\zeta(3)}{(2 \pi i)^3} +r,~ r\in \mathbb{Q},
\end{equation}
where $\chi(M)$ is the Euler characteristic of $M$. It is speculated that equation \ref{eq:PhysicistsY000} is valid in general for an arbitrary mirror pair, but currently there is no proof available. The occurrence of $\zeta(3)$ in $Y_{000}$ is highly interesting, and one might ask if it has an arithmetic origin when combined with the geometry of the mirror threefold $W$.

In order to study this question, we will compute the limit mixed Hodge structure on $H^3(W_{\varphi},\mathbb{Q})$ at the large complex structure limit of $\varphi$ (the relation between $t$ and $\varphi$ is given by the {\em mirror map} defined in section 4). It is shown to split into a direct sum
\begin{equation}
\mathbb{Q}(-1) \oplus \mathbb{Q} (-2) \oplus \mathbf{M},
\end{equation}
where $\mathbf{M}$ is a two-dimensional mixed Hodge structure that forms an extension of $\mathbb{Q}(-3)$ by $\mathbb{Q}(0)$ (Theorem \ref{MainTheorem}). In fact, mirror symmetry plays an essential role in the computation of the limit mixed Hodge structure, and leads to the following result.

\begin{theorem} \label{eq:ThmPeriodofMdual}
Assuming the mirror symmetry conjecture(which will be stated in section 4.3), the dual of $\mathbf{M}$, denoted by $\mathbf{M}^{\vee}$, forms an extension of $\mathbb{Q}(0)$ by $\mathbb{Q}(3)$ whose class in 
$$\text{Ext}^1_{\textbf{MHS}_{\mathbb{Q}}}\left(\mathbb{Q}(0),\mathbb{Q}(3)\right) \simeq  \mathbb{C}/(2 \pi i)^3\,\mathbb{Q}$$
 is the coset of $-(2\pi i)^3 \, Y_{000}/(3\,Y_{111})$.
\end{theorem}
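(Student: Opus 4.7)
The plan is to read off the extension class of $\mathbf{M}^\vee$ from the limit period matrix of $H^3(W_\varphi)$ at the LCSL and to identify that matrix with the prepotential $\mathcal{F}$ via the mirror symmetry conjecture. First I would fix a rational symplectic basis $(A^0, A^1, B_0, B_1)$ of $H_3(W_\varphi, \mathbb{Q})$ adapted to the monodromy weight filtration of the limit MHS, with $A^0$ generating $W_0 \simeq \mathbb{Q}(0)$, $A^1$ lifting a generator of $W_2/W_0 \simeq \mathbb{Q}(-1)$, and $B_0, B_1$ their symplectic duals lying in $W_6$ and $W_4$ respectively. Under the decomposition of Theorem \ref{MainTheorem}, the summand $\mathbf{M}$ is built from $\{A^0, B_0\}$, and by Deligne's description the class of $\mathbf{M}^\vee$ in $\mathrm{Ext}^1(\mathbb{Q}(0), \mathbb{Q}(3)) \simeq \mathbb{C}/(2\pi i)^3\mathbb{Q}$ is the difference between a Hodge-filtration lift and a rational lift of the generator of $\mathrm{Gr}^W_0 \mathbf{M}^\vee$, computable as the reduction modulo $(2\pi i)^3\mathbb{Q}$ of the $B_0$-period of the normalised holomorphic three-form $\Omega_\varphi$.

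Next I would apply the mirror symmetry conjecture of Section 4.3. After normalising $\Omega_\varphi$ so that $\int_{A^0}\Omega_\varphi = 1$ and substituting the flat coordinate $t$ via the mirror map, the four periods become the canonical special-geometry vector $(1,\,t,\,\partial_t\mathcal{F},\,2\mathcal{F}-t\,\partial_t\mathcal{F})$. A direct calculation with \eqref{eq:Prepotential} gives
\[
2\mathcal{F}-t\,\partial_t\mathcal{F} \;=\; \tfrac{1}{6}Y_{111}\,t^3 \;-\; \tfrac{1}{2}Y_{001}\,t \;-\; \tfrac{1}{3}Y_{000} \;+\; O(e^{2\pi i t}),
\]
with $\partial_t\mathcal{F} = -\tfrac{1}{2}Y_{111}t^2 - Y_{011}t - \tfrac{1}{2}Y_{001} + O(e^{2\pi i t})$ and the other two periods simply $1$ and $t$.

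Passing to the limit MHS via Schmid's nilpotent orbit then replaces the polynomial dependence on $t$ by the action of the nilpotent monodromy $N$, whose $\mathfrak{sl}_2$-representation on $H^3(W_\varphi, \mathbb{Q})$ is governed by $N^3 B_0 = Y_{111}\cdot A^0$ (the triple intersection number of \eqref{eq:Prepotential} serving as the structure constant of $\mathfrak{sl}_2$ on $\mathbf{M}$). The polynomial pieces $\tfrac{1}{6}Y_{111}t^3$ and $-\tfrac{1}{2}Y_{001}t$ of the $B_0$-period are rational combinations of the other basis periods and are absorbed into the rational lift; the transcendental residue is $-\tfrac{1}{3}Y_{000}$ divided by the normalisation $Y_{111}$ needed to pass from $B_0$ to the natural $N^3$-preimage of $A^0$. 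Restoring the Tate-twist factor $(2\pi i)^3$ that identifies $\mathbb{Q}(3)_{\mathbb{C}}/\mathbb{Q}(3)$ with $\mathbb{C}/(2\pi i)^3\mathbb{Q}$ produces the coset $-(2\pi i)^3 Y_{000}/(3 Y_{111})$, as asserted.

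The main obstacle is consistent bookkeeping of rational structures and Tate-twist factors: one must match the rational basis of $H^3(W_\varphi, \mathbb{Q})$ with the integral K\"ahler-side basis of $M$ using the mirror symmetry conjecture, and keep accurate track of the $(2\pi i)$-factors arising from the change of variable $\varphi = e^{2\pi i t}$, the Tate twist convention $\mathbb{Q}(n) = (2\pi i)^n\mathbb{Q}$, and the normalisation of $\Omega_\varphi$. The reward is that the rational data $Y_{011}$, $Y_{001}$, and the rational part $r$ of $Y_{000}$ all fall into $(2\pi i)^3\mathbb{Q}$ in the final Ext-class, so only the $\zeta(3)$-contribution to $Y_{000}$ survives modulo rationals.
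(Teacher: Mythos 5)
Your proposal is correct and follows essentially the same route as the paper: both identify the normalised period vector with $(1,t,\partial_t\mathcal{F},2\mathcal{F}-t\,\partial_t\mathcal{F})$ via the mirror map, pass to the limit so that only the constant term $-\tfrac{1}{3}Y_{000}$ of the top period survives after the rational (weight-adapted, $Y_{111}$-normalised) change of basis absorbs the $Y_{001}$, $Y_{011}$ and polynomial-in-$t$ contributions, and then read off the class from Carlson's description of $\mathrm{Ext}^1_{\textbf{MHS}_{\mathbb{Q}}}(\mathbb{Q}(0),\mathbb{Q}(3))\simeq\mathbb{C}/(2\pi i)^3\mathbb{Q}$. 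The paper merely packages this period bookkeeping into the explicit matrices $S$, $S_1$, $S_2$ and the basis $\{x^j\}$ before dualising, so the two arguments coincide in substance.
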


Thus, the mixed Hodge structure arising from the B-model on $W$ is naturally expressed in terms of $A$-model invariants on $M$.
In a number of examples of one-parameter mirror pairs defined over $\Q$ where $Y_{000}$ has been computed \cite{PhilipXenia, PhilipXenia1}, it is  of the form \ref{eq:PhysicistsY000}. Therefore for these mirror pairs the class of $\mathbf{M}^{\vee}$ in $\mathbb{C}/(2 \pi i)^3\,\mathbb{Q}$ is the coset of a rational multiple of $\zeta(3)$. As explained in section 1, this is exactly what would be expected if  $\mathbf{M}^{\vee}$ were the Hodge realisation of a mixed Tate motive over $\mathbb{Q}$. The occurrence of zeta values as limit period integrals has been observed before in a number of contexts. However, the significance of this result is the splitting off of a precise two-step extension inside the limit Hodge structure. This relies on an expression for the monodromy filtration on the limit Hodge structure that makes use of mirror symmetry.

In fact, when the deformation space of $W$ forms part of a one-parameter algebraic family defined over $\mathbb{Q}$ and the large complex structure limit is a rational point (which is the case in the aforementioned examples),   Ayoub's work on the motivic nearby cycle functor produces a limit mixed motive at the large complex structure limit \cite{AyoubT1}. On the other hand, from the works of Steenbrink \cite{SteenbrinkLMHS} and Schmid \cite{Schmid} on the limit mixed Hodge structures, there exists a complex in $D^b(\MHS)$ whose cohomologies compute the limit mixed Hodge structures on $H^n(W,\mathbb{Q}),n \in \mathbb{Z}$. The limit mixed motive is supposed to be the motivic lift of this complex in $D^b(\MHS)$, i.e. the Hodge realisation of the limit mixed motive should be this complex.  Unfortunately the proof of this statement seems to be quite technical and not available yet in literature. Therefore in this paper it is stated as a separate conjecture (Conjecture \ref{limitmotiveconjecture}), due to Ayoub. The main conclusion of this paper is that assuming mirror symmetry and Conjecture \ref{limitmotiveconjecture}, the existing computations of $Y_{000}$ (\ref{eq:PhysicistsY000}) provide highly interesting affirmative examples  of Conjecture \textbf{GHP}. On the other hand, Conjecture \textbf{GHP} also sheds light on the motivic nature of the $\zeta(3)$ in $Y_{000}$, that is, from the converse point of view, if we assume \textbf{GHP} and \ref{limitmotiveconjecture} instead, the computations in this paper show that $Y_{000}$ must be of the form
\begin{equation}
Y_{000}=\frac{r_1}{(2\pi i)^3} \,\zeta(3)+\,r_2,~r_1,r_2 \in \mathbb{Q}.
\end{equation}
Thus what we have observed is an approximate equivalence between Conjecture \textbf{GHP} for  $\mathbf{M}^{\vee}$ arising from this family  and equation \ref{eq:PhysicistsY000} in mirror symmetry. 

The structure of this paper is as follows. Section 1 is a very brief introduction to Voevodsky's category of mixed motives and mixed Tate motives. Section 2 is a short overview of the Gauss-Manin connection and the construction of the limit mixed Hodge structures by Schmid and Steenbrink . Section 3 discusses the construction of the limit mixed motive by Ayoub's motivic nearby cycle functor. Section 4 is concerned with the computation of the limit mixed Hodge structure at the large complex structure limit. Section 5 studies the motivic nature of $\zeta(3)$ and its implications. The substance of the paper is in sections 4 and 5 and the reader familiar with the background  in algebraic and arithmetic geometry should have no trouble jumping directly to that portion following a brief look over the notation.  On the other hand, the first three sections and the appendices are included in order to give a brief exposition of the prerequisites, mostly for the benefit of a readership approaching the paper with a physics background.

\section{Mixed motives}

This section is a very brief introduction to Voevodsky's triangulated category of mixed motives and the abelian category of mixed Tate motives over $\mathbb{Q}$. Of course it is not meant to be complete, hence necessary references will be  provided for further reading. At the same time, Appendix \ref{MHSandExtensions} contains an introduction to mixed Hodge structures, while Appendix \ref{PureMotive} is an overview of pure motives, both of which are prerequisites for this section. Therefore  readers who are not familiar with this background are urged to consult the appendices and the references given there. 

\subsection{Voevodsky's mixed motives} \label{VoevodskyMM}

Let $k$ be a field that admits resolution of singularities and let $\Lambda$ be a commutative ring with unit, Voevodsky's category of mixed motives, denoted by $\textbf{DM}(k, \Lambda)$, is a rigid tensor triangulated category \cite{VoevodskyMM,VoevodskyCycles}. The ring $\Lambda$ is called the coefficient ring, while in this paper we are mostly concerned with the case where $\Lambda$ is $\mathbb{Q}$. We will list some properties of $\textbf{DM}(k, \Lambda)$ that we will use, a detailed discussion of which can be found in the references \cite{VoevodskyMM, VoevodskyCycles, AyoubFirst}.

\begin{enumerate}
	\item The category $\textbf{DM}(k, \Lambda)$ is a rigid tensor triangulated category that contains pure Tate motive $\Lambda(n),\,n\in \mathbb{Z}$. Moreover $\Lambda(0)$ is a unit object and $\Lambda(-1)$ is the dual of $\Lambda(1)$                                                                                                                                                                                                                                       
	\begin{equation}
	\Lambda(-1)= \text{Hom}(\Lambda(1),\Lambda(0)),
	\end{equation}
	where $ \text{Hom}$ is the internal Hom operator. The Tate motive $\Lambda(n)$ satisfies
	\begin{equation}
	\Lambda(n)=
	\begin{cases*}
	\Lambda(1)^{\otimes n} & if $n \geq 0$ \\
	\Lambda(-1)^{\otimes n} & if $n < 0$ \\
	\end{cases*}.
	\end{equation}
	For an object $\mathcal{N}$ of $\textbf{DM}(k, \Lambda)$, its Tate twist $\mathcal{N}(n)$ is defined as $\mathcal{N} \otimes \Lambda(n)$, while its dual $\mathcal{N}^{\vee}$ is defined as $\text{Hom}(\mathcal{N},\Lambda(0))$.

	\item There exists a contravariant functor from the category of non-singular projective varieties over $k$ to the category $\textbf{DM}(k, \Lambda)$
	\begin{equation}
	M_{gm}: \textbf{SmProj}/k^{\text{op}} \rightarrow \textbf{DM}(k, \Lambda),
	\end{equation} 
	which sends a non-singular projective variety $X$ to a constructible object of $\textbf{DM}(k, \Lambda)$ such that the product in $\textbf{SmProj}/k$ is sent to the tensor product in $\textbf{DM}(k, \Lambda)$
	\begin{equation}
	M_{gm}(X \times_k Y)=M_{gm}(X) \otimes M_{gm}(Y).
	\end{equation} 
	The definition of constructibility can be found in \cite{AyoubFirst}, and the full triangulated subcategory of $\textbf{DM}(k, \Lambda)$ consisting of constructible objects will be denoted by $\textbf{DM}_{\text{gm}}(k, \Lambda)$. It is the smallest full pseudoabelian triangulated subcategory of $\textbf{DM}(k, \Lambda)$ that contains the image of $M_{gm}$ and is also closed under Tate twists.

	\item When the field $k$ admits an embedding into $\mathbb{C}$, say $\sigma:k \rightarrow \mathbb{C}$, there exists a Hodge realisation functor
	\begin{equation}
	\mathfrak{R}_{\sigma}: \textbf{DM}_{\text{gm}}(k, \mathbb{Q}) \rightarrow D^b(\textbf{MHS}_{\mathbb{Q}})
	\end{equation}
such that $\mathfrak{R}_{\sigma}(M_{gm}(X))$ is a complex in $D^b(\textbf{MHS}_{\mathbb{Q}})$ whose cohomology computes the singular cohomology of $X(\mathbb{C})$ with the natural rational MHS \cite{HuberR1,HuberR2}. In this paper we are mostly concerned with the case where $k=\mathbb{Q}$, since there is only one embedding of $\mathbb{Q}$ into $\mathbb{C}$, let us denote the Hodge realisation functor by $\mathfrak{R}$ for simplicity.
	
	\item The composition of $\mathfrak{R}_{\sigma}$ with the forgetful functor from $D^b(\textbf{MHS}_{\mathbb{Q}})$ to the derived category of rational vector spaces $D^b(\textbf{Vec}_{\mathbb{Q}})$ is (up to an equivalence) the Betti realisation functor $\mathfrak{R}_{\text{Betti}}$ \cite{AyoubBetti}
	\begin{equation}
	\mathfrak{R}_{\text{Betti}}:\textbf{DM}_{\text{gm}}(k, \mathbb{Q})       \rightarrow D^b(\textbf{Vec}_{\mathbb{Q}}).
	\end{equation}   	
	
\end{enumerate}

\subsection{Mixed Tate motives}

We now briefly discuss the abelian category of mixed Tate motives $\textbf{TM}_{\mathbb{Q}}$, while the readers are referred to the paper \cite{LevineVC} for more details. Let $K_i(k)$ be the $i$-th algebraic $K$-group of $k$, then there exists a family of Adams operators $\{ \psi^l \}_{\,l \geq 1}$ that act on $K_i(k)$ as group homomorphisms \cite{Weibel}. These Adams operators induce linear maps on the rational vector space $K_i(k) \otimes_{\mathbb{Z}} \mathbb{Q}$, which induce a decomposition
\begin{equation} 
K_i(k) \otimes_{\mathbb{Z}} \mathbb{Q}= \oplus_{j \geq 0} \,K_i(k)^{(j)},
\end{equation}    
where the eigenspace $ K_i(k)^{(j)}$ is defined as
\begin{equation}
K_i(k)^{(j)}:= \{ x \in K_i(k) \otimes_{\mathbb{Z}} \mathbb{Q}:\psi^l(x)=l^j \,x, \,\forall \, l \geq 1 \}.
\end{equation}

The stronger version of Beilinson and Soul\'e's vanishing conjecture is stated as follows \cite{LevineVC}.

\textbf{Conjecture BS} \, \textit{$K_{2\,q-p}(k)^{(q)}=0$ if $p \leq 0$ and $q>0$}.
	 
Conjecture \textbf{BS} has been proved when $k$ is $\mathbb{Q}$ \cite{DeligneTate}. Let $\textbf{DTM}_{\mathbb{Q}}$ be the full triangulated subcategory of $\textbf{DM}_{\text{gm}}(\mathbb{Q},\mathbb{Q})$ generated by Tate objects $\mathbb{Q}(n),n \in \mathbb{Z}$, then from the paper \cite{LevineVC} there exists a motivic $t$-structure on $\textbf{DTM}_{\mathbb{Q}}$ whose heart is defined to be the abelian category of mixed Tate motives $\textbf{TM}_{\mathbb{Q}}$. Given two objects $A$ and $B$ of $\textbf{TM}_{\mathbb{Q}}$, an extension of $B$ by $A$ is a short exact sequence
\begin{equation} \label{eq:TMExtension}
\begin{tikzcd}
0 \arrow[r] &  A \arrow[r] & E \arrow[r] & B \arrow[r] & 0.
\end{tikzcd}
\end{equation}
Two extensions of $B$ by $A$ are said to be isomorphic if there exists a commutative diagram of the form
\begin{equation}
\begin{tikzcd}
0 \arrow[r]  &  A \arrow[r] \arrow[d,"\text{Id}"] & E \arrow[r] \arrow[d,"\simeq"] & B \arrow[r] \arrow[d,"\text{Id}"] & 0 \\
0 \arrow[r] & A \arrow[r] &  E' \arrow[r] & B \arrow[r] & 0
\end{tikzcd}.
\end{equation}
The extension \ref{eq:TMExtension} is said to split if it is isomorphic to the trivial extension
\begin{equation} \label{eq:TMTrivialExtension}
\begin{tikzcd}
0 \arrow[r] & A \arrow[r,"i"] & A \oplus B \arrow[r,"j"] & B \arrow[r]  &  0,
\end{tikzcd}
\end{equation}
where $i$ is the natural inclusion and $j$ is the natural projection. The set of the isomorphism classes of extensions of $B$ by $A$, denoted by $\text{Ext}^1_{\textbf{TM}_{\mathbb{Q}}}(B,A)$, has a group structure induced by Baer summation with zero element given by the trivial extension \ref{eq:TMTrivialExtension}. What is very important is that the extensions of $\mathbb{Q}(0)$ by $\mathbb{Q}(n),n \geq 3$ have an explicit description from
Corollary 4.3 of \cite{LevineVC}, which is stated as Lemma \ref{TMextensionslemma}.

\begin{lemma} \label{TMextensionslemma}
There exists an isomorphism $\tau_{n,1}$ of the form
\begin{equation}
\tau_{n,1}: \text{Ext}^1_{\textbf{TM}_{\mathbb{Q}}}\left(\mathbb{Q}(0),\mathbb{Q}(n)\right) \rightarrow K_{2n-1}(\mathbb{Q})^{(n)},\,n \geq 2.
\end{equation}
\end{lemma}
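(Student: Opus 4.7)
The plan is to identify the Ext group in the abelian category $\textbf{TM}_{\mathbb{Q}}$ with a piece of motivic cohomology of $\mathrm{Spec}(\mathbb{Q})$, and then invoke the Bloch--Beilinson isomorphism between motivic cohomology and a graded piece of rational $K$-theory. The whole argument is essentially a packaging of Levine's Corollary~4.3, together with the fact that Beilinson--Soul\'e vanishing is known over $\mathbb{Q}$ thanks to Borel.

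First, I would pass from the abelian heart to the ambient triangulated category. Since $\MTM$ is the heart of a bounded $t$-structure on $\DTM$, one has a canonical identification
\begin{equation*}
\mathrm{Ext}^{1}_{\MTM}\bigl(\mathbb{Q}(0),\mathbb{Q}(n)\bigr)\;\cong\;\mathrm{Hom}_{\DTM}\bigl(\mathbb{Q}(0),\mathbb{Q}(n)[1]\bigr),
\end{equation*}
obtained by rotating Yoneda 1-extensions into distinguished triangles. For this identification to be an isomorphism (and not merely a surjection), one needs the higher Ext$^{p}$ groups for $p\le 0$ between Tate objects to vanish; this is precisely what Beilinson--Soul\'e vanishing $K_{2n-p}(\mathbb{Q})^{(n)}=0$ for $p\le 0$, $n>0$ buys us, and it is known over $\mathbb{Q}$ by the combination of Borel's computations and Conjecture~\textbf{BS} (stated in the excerpt).

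Next, I would reinterpret the triangulated Hom group as motivic cohomology. By the very construction of $\DTM$ and the definition of motivic cohomology,
\begin{equation*}
\mathrm{Hom}_{\DTM}\bigl(\mathbb{Q}(0),\mathbb{Q}(n)[p]\bigr)\;=\;H^{p}_{\mathcal{M}}\bigl(\mathrm{Spec}(\mathbb{Q}),\mathbb{Q}(n)\bigr).
\end{equation*}
For $(p,n)=(1,n)$ this is the weight-$n$ motivic cohomology in degree~$1$ of the point.

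The final step is to invoke the Bloch--Beilinson comparison between motivic cohomology and Adams eigenspaces of algebraic $K$-theory, which gives
\begin{equation*}
H^{p}_{\mathcal{M}}\bigl(\mathrm{Spec}(\mathbb{Q}),\mathbb{Q}(n)\bigr)\;\cong\;K_{2n-p}(\mathbb{Q})^{(n)},
\end{equation*}
and specialise to $p=1$ to read off $K_{2n-1}(\mathbb{Q})^{(n)}$. Chaining the three isomorphisms defines the map $\tau_{n,1}$ and simultaneously verifies that it is an isomorphism. The main technical obstacle, and the step one should not take for granted, is the passage from the abelian $\mathrm{Ext}^{1}$ to a triangulated $\mathrm{Hom}$: this rests on the existence of the motivic $t$-structure on $\DTM$ constructed in Levine's paper, whose availability depends crucially on Beilinson--Soul\'e vanishing over $\mathbb{Q}$. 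Everything else is a formal chain of identifications, so in a fully written proof I would simply cite Levine's Corollary~4.3 for the whole assembly.
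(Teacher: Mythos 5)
Your proposal is correct and matches the paper, which proves this lemma simply by citing Levine's Corollary~4.3; your chain of identifications (abelian $\mathrm{Ext}^{1}$ in the heart $\to$ triangulated $\mathrm{Hom}$ into a shift $\to$ motivic cohomology $\to$ Adams eigenspace of $K$-theory) is exactly what that corollary packages. One minor imprecision: the comparison $\mathrm{Ext}^{1}_{\MTM}(A,B)\cong\mathrm{Hom}_{\DTM}(A,B[1])$ holds for the heart of any $t$-structure (hearts are extension-closed), so Beilinson--Soul\'e vanishing is needed to construct the $t$-structure itself and to control higher $\mathrm{Ext}$ groups, not to make the degree-one comparison an isomorphism --- but you correctly flag the $t$-structure's existence as the real input, so this does not affect the argument.
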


The rank of the algebraic $K$-group $K_{2n-1}(\mathbb{Q})$ is well-known \cite{Grayson}
\begin{equation}
\text{rank}\,K_{2n-1}(\mathbb{Q})=
\begin{cases*}
0          & if $n=2k, ~k\geq 1$  \\
1          & if  $n=2k+1,~k \geq 1$ 
\end{cases*},
\end{equation}
which shows
\begin{equation}
K_{2n-1}(\mathbb{Q}) \otimes_{\mathbb{Z}} \mathbb{Q}=
\begin{cases*}
0          & if $n=2\,k, \,k\geq 1$  \\
\mathbb{Q}          & if  $n=2\,k+1,\,k \geq 1$ 
\end{cases*}.
\end{equation}
Since $ K_{2n-1}(\mathbb{Q})^{(n)}$ is a linear subspace of $K_{2n-1}(\mathbb{Q}) \otimes_{\mathbb{Z}} \mathbb{Q}$, this immediately implies
\begin{equation}
 K_{2n-1}(\mathbb{Q})^{(n)}=0,\,\text{if}\,\,n=2\,k,\,k\geq 1.
\end{equation}
Similarly, when $n=2\,k+1,\,k \geq 1$, $K_{2n-1}(\mathbb{Q})^{(n)}$ is either 0 or $ \mathbb{Q}$. But from \cite{DeligneFund, DeligneTate, HainMatsumoto}, there exists a nontrivial extension of $\mathbb{Q}(0)$ by $\mathbb{Q}(n)$ when $n=2\,k+1,\,k \geq 1$, hence we deduce that
\begin{equation}
\text{Ext}^1_{\textbf{TM}_{\mathbb{Q}}}\left(\mathbb{Q}(0),\mathbb{Q}(n)\right) \simeq K_{2n-1}(\mathbb{Q})^{(n)}=\mathbb{Q}, \,\,\text{if}\,\,n=2\,k+1,\,k \geq 1.
\end{equation}

The restriction of the Hodge realisation functor $\mathfrak{R}$ to $\textbf{TM}_{\mathbb{Q}}$ is a functor whose image essentially lies in the full abelian subcategory $\textbf{MHT}_{\mathbb{Q}}$ of mixed Hodge-Tate structures
\begin{equation} \label{eq:HodgeRealTate}
\mathfrak{R}:\textbf{TM}_{\mathbb{Q}} \rightarrow \textbf{MHT}_{\mathbb{Q}},
\end{equation}
where $\textbf{MHT}_{\mathbb{Q}}$ consists of mixed Hodge structures whose semi-simplifications are direct sums of the Tate objects $\mathbb{Q}(n)$. From \cite{DeligneTate}, \ref{eq:HodgeRealTate} is exact and fully faithful, hence it induces an injective homomorphism from $\text{Ext}^1_{\textbf{TM}_{\mathbb{Q}}}\left(\mathbb{Q}(0),\mathbb{Q}(n)\right)$ to $\text{Ext}^1_{\textbf{MHT}_{\mathbb{Q}}}\left(\mathbb{Q}(0),\mathbb{Q}(n)\right)$. Moreover, the isomorphism \ref{eq:Isoq0q3} implies
\begin{equation} \label{eq:THexe3}
\text{Ext}^1_{\textbf{MHT}_{\mathbb{Q}}}\left(\mathbb{Q}(0),\mathbb{Q}(n)\right) = \text{Ext}^1_{\textbf{MHS}_{\mathbb{Q}}}\left(\mathbb{Q}(0),\mathbb{Q}(n)\right) \simeq  \mathbb{C}/(2 \pi i)^n\,\mathbb{Q}.
\end{equation}
For later convenience, we now summarise the results of this section in a lemma (which is of course well-known).

\begin{lemma} \label{zeta3lemma}
When $n \geq 3$, the image of $\text{Ext}^1_{\textbf{TM}_{\mathbb{Q}}}\left(\mathbb{Q}(0),\mathbb{Q}(n)\right)$ in $ \mathbb{C}/(2  \pi  i)^n\,\mathbb{Q}$ under the Hodge realisation functor is the subgroup of $ \mathbb{C}/(2  \pi  i)^n\,\mathbb{Q}$ consisting of elements that are the cosets of rational multiples of $\zeta(n)$.
\end{lemma}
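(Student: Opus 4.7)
The plan is to proceed by the parity of $n$, using the identifications of $\text{Ext}^1_{\textbf{TM}_{\mathbb{Q}}}(\mathbb{Q}(0), \mathbb{Q}(n))$ computed immediately before the statement. In both cases the work reduces to matching a single generator of an at-most-one-dimensional $\mathbb{Q}$-subspace of $\mathbb{C}/(2\pi i)^n\mathbb{Q}$.

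For even $n = 2k \geq 4$, the preceding calculation already gives $\text{Ext}^1_{\textbf{TM}_{\mathbb{Q}}}(\mathbb{Q}(0), \mathbb{Q}(n)) = 0$, so its image is the zero subgroup. On the other hand, Euler's identity $\zeta(2k) = -(2\pi i)^{2k} B_{2k}/(2\,(2k)!)$ places $\zeta(n)$ inside $(2\pi i)^n \mathbb{Q}$, so the subgroup of $\mathbb{C}/(2\pi i)^n \mathbb{Q}$ generated by cosets of rational multiples of $\zeta(n)$ also vanishes. Hence both sides are trivially $0$.

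For odd $n = 2k+1 \geq 3$, the preceding discussion gives $\text{Ext}^1_{\textbf{TM}_{\mathbb{Q}}}(\mathbb{Q}(0), \mathbb{Q}(n)) \simeq \mathbb{Q}$. Since the Hodge realization $\mathfrak{R}:\textbf{TM}_{\mathbb{Q}} \to \textbf{MHT}_{\mathbb{Q}}$ is exact and fully faithful by \cite{DeligneTate}, the induced map on $\text{Ext}^1$ is injective; composing with the isomorphism \ref{eq:THexe3}, the image is therefore a one-dimensional $\mathbb{Q}$-subspace of $\mathbb{C}/(2\pi i)^n \mathbb{Q}$. The core step I would carry out is to exhibit a single nontrivial extension of $\mathbb{Q}(0)$ by $\mathbb{Q}(n)$ in $\textbf{TM}_{\mathbb{Q}}$ whose Hodge realization has period a nonzero rational multiple of $\zeta(n)$; this is furnished by the polylogarithm mixed Tate motive arising from the motivic fundamental group of $\mathbb{P}^1 \setminus \{0,1,\infty\}$ via Deligne's framing construction, as in \cite{DeligneFund, HainMatsumoto}. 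Given such a generator, the one-dimensional image must coincide with the $\mathbb{Q}$-line of cosets of rational multiples of $\zeta(n)$; here one uses that $\zeta(n)$ is real and nonzero, hence cannot lie in $(2\pi i)^n\mathbb{Q} \subset i\mathbb{R}$ for odd $n$, so its coset is itself nonzero in $\mathbb{C}/(2\pi i)^n\mathbb{Q}$, and a one-dimensional $\mathbb{Q}$-subspace containing a nonzero rational multiple of $\zeta(n)$ must equal $\mathbb{Q}\,\zeta(n) \bmod (2\pi i)^n\mathbb{Q}$.

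The main obstacle, and indeed the only genuine input beyond the paper's earlier material, is the explicit identification of the period of the polylogarithm mixed Tate motive with a rational multiple of $\zeta(n)$. This is classical but nontrivial: it rests on Deligne's construction of the framed motivic polylogarithm and its comparison with the Beilinson--Deligne Hodge-theoretic polylogarithm extension. Granted this input from \cite{DeligneFund, DeligneTate, HainMatsumoto}, the lemma assembles into the short verification above.
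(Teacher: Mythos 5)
Your proposal is correct and follows essentially the same route as the paper: a parity split, with both sides vanishing for even $n$ by Euler's formula and the vanishing of $K_{2n-1}(\mathbb{Q})^{(n)}$, and for odd $n$ an appeal to the one-dimensionality of the Ext group together with the existence (from Deligne et al.) of a nontrivial mixed Tate extension whose Hodge realisation has period a nonzero rational multiple of $\zeta(n)$. Your added checks (injectivity of the realisation on $\mathrm{Ext}^1$ and the nonvanishing of the coset of $\zeta(n)$ for odd $n$) only make explicit what the paper's proof uses implicitly.
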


\begin{proof}

When $n=2\,k,\,k \geq 2$, $\zeta(n)$ is a rational multiple of $(2\pi i)^n$, therefore the coset of $\zeta(n)$ in $ \mathbb{C}/(2  \pi  i)^n\,\mathbb{Q}$ is 0, hence this lemma follows immediately from Lemma \ref{TMextensionslemma}.

When $n=2\,k+1,\,k \geq 1$, from \cite{DeligneFund, DeligneTate, HainMatsumoto}, there exists a mixed Tate motive that forms a nontrivial extension of $\mathbb{Q}(0)$ by $\mathbb{Q}(n)$, furthermore its Hodge realisation in $\text{Ext}^1_{\textbf{MHT}_{\mathbb{Q}}}\left(\mathbb{Q}(0),\mathbb{Q}(n) \right)$ is the coset of a nonzero rational multiple of $\zeta(n)$. Then this lemma is an immediate result of Lemma \ref{TMextensionslemma}.
\end{proof}

\begin{remark}
From the results in this section, Conjecture \textbf{GHP} follows immediately from Conjecture \textbf{GH}.
\end{remark}

\section{Limit mixed Hodge structure} \label{LimitMHS}

In this section, we will discuss Steenbrink's and Schmid's constructions of limit mixed Hodge structures \cite{SteenbrinkLMHS,Schmid}. Assume given a flat proper map defined over $\mathbb{Q}$
\begin{equation} \label{eq:FibrationSRational}
\pi_{\mathbb{Q}}: X \rightarrow S,
\end{equation}
where $X$ is a quasi-projective variety over $\mathbb{Q}$ and $S$ is a smooth curve over $\mathbb{Q}$. We further assume that the only singular fiber is over a rational point $0\in S$
\begin{equation}
Y:=\pi_{\mathbb{Q}}^{-1}(0).
\end{equation}
We will assume $Y$ is reduced with nonsingular components crossing normally. For simplicity we now define
\begin{equation}
X^*:=X - Y,~~S^*:=S - \{0\}.
\end{equation}
By abuse of notations, the restriction of $\pi_{\mathbb{Q}}$ to $X^*$ will also be denoted by $\pi_{\mathbb{Q}}$
\begin{equation} \label{eq:Piq}
\pi_{\mathbb{Q}}:X^* \rightarrow S^*,
\end{equation}
which is a smooth fibration between smooth varieties. After field extension from $\mathbb{Q}$ to $\mathbb{C}$, we obtain a smooth fibration $\pi_{\mathbb{C}}$ between smooth varieties over $\mathbb{C}$
\begin{equation}
\pi_{\mathbb{C}}:X^*_{\mathbb{C}} \rightarrow S^*_{\mathbb{C}},
\end{equation}
where $X^*_{\mathbb{C}}:=X^* \times_{\text{Spec}\, \mathbb{Q}} \text{Spec} \,\mathbb{C}$, etc. While the analytification of $\pi_{\mathbb{C}}$, denoted by $\pi_{\mathbb{C}}^{\text{an}}$, is a smooth fibration between complex manifolds \cite{Hartshorne}
\begin{equation}
\pi_{\mathbb{C}}^{\text{an}}:X_{\mathbb{C}}^{*,\text{an}} \rightarrow S_{\mathbb{C}}^{*,\text{an}}.
\end{equation}

Since 0 is a smooth point of $S$, the local ring $\mathscr{O}_{S,0}$ is a discrete valuation ring (DVR) \cite{Hartshorne}. Let $\varphi$ be a uniformizer of $\mathscr{O}_{S,0}$, then there exists an open affine neighborhood $U$ of $0$ such that $\varphi$ defines a regular function on $U_{\mathbb{C}}^{\text{an}}$. Replace $S$ by $U$ if necessary we will assume $S$ is affine and $\varphi$ defines a regular function on $S_{\mathbb{C}}^{\text{an}}$. For later convenience, choose a small neighborhood $\Delta$ of 0 in $S_{\mathbb{C}}^{\text{an}}$ of the form
\begin{equation}
\Delta=\{\varphi \in \mathbb{C}:|\varphi|< \epsilon , ~0 < \epsilon \leq 1 \}.
\end{equation}

\begin{remark} \label{FixedParameter}
In this paper, we fix the choices of a uniformizer $\varphi$, a universal cover $\widetilde{\Delta}^*$ of $\Delta^*$ and a  holomorphic logarithm $\log \, \varphi$ of $\varphi$ on $\widetilde{\Delta}^*$ that we refer to loosely as a `multi-valued' holomorphic function on $\Delta^*$.
\end{remark}
The restriction of $\pi_{\mathbb{C}}^{\text{an}}$ to $\mathcal{X}:=(\pi_{\mathbb{C}}^{\text{an}})^{-1}(\Delta)$ will be denoted by
\begin{equation} \label{eq:pidelta}
\pi_{\Delta}:\mathcal{X} \rightarrow \Delta,
\end{equation}
where the only singular fiber $\mathcal{X}_0:=\pi_{\Delta}^{-1}(0)$ is the analytification of $Y_{\mathbb{C}}$. While the restriction of $\pi_{\Delta}$ to $\mathcal{X}^*:=\mathcal{X} - \mathcal{X}_0$ will be denoted by $\pi_{\Delta^*}$
\begin{equation}
\pi_{\Delta^*}: \mathcal{X}^* \rightarrow \Delta^*.
\end{equation}

\subsection{The Gauss-Manin connection}

The relative de Rham cohomology sheaf of the fibration \ref{eq:Piq} is defined by \cite{AltmanKleiman}
\begin{equation}
\mathscr{V}_{\mathbb{Q}}:=\mathbb{R}^q \,\pi_{\mathbb{Q},*}(\Omega_{X^*/S^*}^*),
\end{equation}
where $\Omega_{X^*/S^*}^*$ is the complex of sheaves of relative differentials \cite{AltmanKleiman,KatzOda}
\begin{equation} \label{eq:rationalcomplex}
\Omega_{X^*/S^*}^*: 0 \rightarrow \mathcal{O}_{X^*/S^*} \xrightarrow{d} \Omega_{X^*/S^*}^1 \xrightarrow{d} \cdots \xrightarrow{d}  \Omega_{X^*/S^*}^n \rightarrow 0.
\end{equation}
Where $n$ is the  dimension of the fibers of \ref{eq:Piq}. Since $\pi_{\mathbb{Q}}$ is a smooth fibration between smooth varieties,  $\mathscr{V}_{\mathbb{Q}}$ is a locally free sheaf over $S^*$ \cite{Grothendieck1, KatzOda}. The fiber of  $\mathscr{V}_{\mathbb{Q}}$ over a closed point $\varphi \in S^*$ is the $q$-th algebraic de Rham cohomology $\mathbb{H}^q(X_{\varphi},\Omega_{X_{\varphi}}^*)$, where $X_{\varphi}$ is a variety defined over the residue field $\kappa(\varphi)$ of $\varphi$ \cite{KatzOda, SteenbrinkLMHS}. The complex $\Omega_{X^*/S^*}^*$ is filtered by the following complexes
\begin{equation} \label{eq:rationalfiltercomplex}
F^p\, \Omega_{X^*/S^*}^*: 0 \rightarrow \cdots \rightarrow 0 \rightarrow \Omega_{X^*/S^*}^p \xrightarrow{d}  \Omega_{X^*/S^*}^{p+1} \xrightarrow{d} \cdots \xrightarrow{d}  \Omega_{X^*/S^*}^n \rightarrow 0,
\end{equation}
which induces a subsheaf filtration of $\mathscr{V}_{\mathbb{Q}}$
\begin{equation}
\mathscr{F}^p_{\mathbb{Q}}:=\text{Im}\,\big(\mathbb{R}^q \,\pi_{\mathbb{Q},*}(F^p\,\Omega_{X^*/S^*}^*) \rightarrow \mathbb{R}^q \,\pi_{\mathbb{Q},*}(\Omega_{X^*/S^*}^*)\big).
\end{equation}
Notice that $\mathscr{F}^p_{\mathbb{Q}}$ is also locally free. The complexification of the complex \ref{eq:rationalcomplex} is
\begin{equation} \label{eq:complexcomplex}
\Omega_{X_{\mathbb{C}}^*/S_{\mathbb{C}}^*}^*: 0 \rightarrow \mathcal{O}_{X_{\mathbb{C}}^*/S_{\mathbb{C}}^*} \xrightarrow{d} \Omega_{X_{\mathbb{C}}^*/S_{\mathbb{C}}^*}^1 \xrightarrow{d} \cdots \xrightarrow{d}  \Omega_{X_{\mathbb{C}}^*/S_{\mathbb{C}}^*}^n \rightarrow 0,
\end{equation}
which is filtered by the complexification of the complex \ref{eq:rationalfiltercomplex}
\begin{equation} \label{eq:complexfilteredcomplex}
F^p\, \Omega_{X_{\mathbb{C}}^*/S_{\mathbb{C}}^*}^*: 0 \rightarrow \cdots \rightarrow 0 \rightarrow \Omega_{X_{\mathbb{C}}^*/S_{\mathbb{C}}^*}^p \xrightarrow{d}  \Omega_{X_{\mathbb{C}}^*/S_{\mathbb{C}}^*}^{p+1} \xrightarrow{d} \cdots \xrightarrow{d}  \Omega_{X_{\mathbb{C}}^*/S_{\mathbb{C}}^*}^n \rightarrow 0.
\end{equation}
The sheaf $\mathscr{V}_{\mathbb{C}}$ and its subsheaf $\mathscr{F}^p_{\mathbb{C}}$ are defined by
\begin{equation}
\mathscr{V}_{\mathbb{C}}:=\mathbb{R}^q \, \pi_{\mathbb{C},*}(\Omega_{X^*_{\mathbb{C}}/S^*_{\mathbb{C}}}^*),~\mathscr{F}^p_{\mathbb{C}}:=\mathbb{R}^q \, \pi_{\mathbb{C},*}(F^p\,\Omega_{X^*_{\mathbb{C}}/S^*_{\mathbb{C}}}^*),
\end{equation}
which are the complexifications of $\mathscr{V}_{\mathbb{Q}}$ and $\mathscr{F}^p_{\mathbb{Q}}$ respectively. The analytification of the complex \ref{eq:complexcomplex} is
\begin{equation} \label{eq:complexanalyticcomplex}
\Omega_{X_{\mathbb{C}}^{*,\text{an}}/S_{\mathbb{C}}^{*,\text{an}}}^*: 0 \rightarrow \mathcal{O}_{X_{\mathbb{C}}^{*,\text{an}}/S_{\mathbb{C}}^{*,\text{an}}} \xrightarrow{d} \Omega_{X_{\mathbb{C}}^{*,\text{an}}/S_{\mathbb{C}}^{*,\text{an}}}^1 \xrightarrow{d} \cdots \xrightarrow{d}  \Omega_{X_{\mathbb{C}}^{*,\text{an}}/S_{\mathbb{C}}^{*,\text{an}}}^n \rightarrow 0,
\end{equation}
which has a filtration given by the analytification of the complex \ref{eq:complexfilteredcomplex}
\begin{equation} \label{eq:complexanalyticfilteredcomplex}
F^p\, \Omega_{X_{\mathbb{C}}^{*,\text{an}}/S_{\mathbb{C}}^{*,\text{an}}}^*: 0 \rightarrow \cdots \rightarrow 0 \rightarrow \Omega_{X_{\mathbb{C}}^{*,\text{an}}/S_{\mathbb{C}}^{*,\text{an}}}^p \xrightarrow{d}  \Omega_{X_{\mathbb{C}}^{*,\text{an}}/S_{\mathbb{C}}^{*,\text{an}}}^{p+1} \xrightarrow{d} \cdots \xrightarrow{d}  \Omega_{X_{\mathbb{C}}^{*,\text{an}}/S_{\mathbb{C}}^{*,\text{an}}}^n \rightarrow 0.
\end{equation}
The sheaf $\mathscr{V}^{\text{an}}_{\mathbb{C}}$ and its subsheaf $\mathscr{F}^{p,\text{an}}_{\mathbb{C}}$ are defined by
\begin{equation}
\mathscr{V}^{\text{an}}_{\mathbb{C}} :=\mathbb{R}^q\, \pi_{\mathbb{C},*}^{\text{an}}\,(\Omega_{X_{\mathbb{C}}^{*,\text{an}} / S_{\mathbb{C}}^{*,\text{an}}}^*),~\mathscr{F}^{p,\text{an}}_{\mathbb{C}} :=\mathbb{R}^q\, \pi_{\mathbb{C},*}^{\text{an}}\,(F^p\,\Omega_{X_{\mathbb{C}}^{*,\text{an}} / S_{\mathbb{C}}^{*,\text{an}}}^*),
\end{equation}
which are the analytifications of $\mathscr{V}_{\mathbb{C}}$ and $\mathscr{F}^p_{\mathbb{C}}$ respectively. The Gauss-Manin connection $\nabla_{\mathbb{Q}}$ of $\mathscr{V}_{\mathbb{Q}}$ is an integrable algebraic connection \cite{KatzOda,Zein} 
\begin{equation}
\nabla_{\mathbb{Q}}:\mathscr{V}_{\mathbb{Q}} \rightarrow \Omega_{S^*/\mathbb{Q}}^1 \otimes_{\mathcal{O}_{S^*}} \mathscr{V}_{\mathbb{Q}}
\end{equation} 
that satisfies Griffiths transversality
\begin{equation} \label{eq:GriffithsTrans}
\nabla_{\mathbb{Q}}  (\mathscr{F}^p_{\mathbb{Q}} ) \subset \Omega_{S^*/{\mathbb{Q}}}^1 \otimes_{\mathcal{O}_{S^*}} \mathscr{F}^{p-1}_{\mathbb{Q}}.
\end{equation}
The complexification of $\nabla_{\mathbb{Q}}$, denoted by $\nabla_{\mathbb{C}}$, is the Gauss-Manin connection of $\mathscr{V}_{\mathbb{C}}$ and the analytification of $\nabla_{\mathbb{Q}}$, denoted by $\nabla_{\mathbb{C}}^{\text{an}}$, is the Gauss-Manin connection of $\mathscr{V}^{\text{an}}_{\mathbb{C}}$. On the other hand, over the punctured disc $\Delta^*$ there is a local system \cite{Zein}
\begin{equation} \label{eq:IntegralLocalSystem}
V_{\mathbb{Z}}:=\text{R}^q \,\pi_{\Delta^*,*} \mathbb{Z},
\end{equation}
whose fiber over a point $\varphi \in \Delta^*$ is the singular cohomology group $H^q(\mathcal{X}_{\varphi},\mathbb{Z})$ \cite{Schmid,Zein}. The dual of $V_{\mathbb{Z}}$, denoted by $V^{\vee}_{\mathbb{Z}}$, is a local system over $\Delta^*$ whose fiber over $\varphi \in \Delta^*$ is the singular homology group $H_q(\mathcal{X}_{\varphi},\mathbb{Z})$ (modulo torsion). Similarly let $V_{\mathbb{Q}}$ (resp. $V_{\mathbb{C}}$) be the local system whose fiber at $\varphi$ is $H^q(\mathcal{X}_{\varphi},\mathbb{Q})$ (resp. $H^q(\mathcal{X}_{\varphi},\mathbb{C})$)
\begin{equation}
V_{\mathbb{Q}}:=\text{R}^q \,\pi_{\Delta^*,*} \mathbb{Q},~V_{\mathbb{C}}:=\text{R}^q \,\pi_{\Delta^*,*} \mathbb{C},
\end{equation}
and they are also given by \cite{Zein}
\begin{equation}
V_{\mathbb{Q}}=V_{\mathbb{Z}} \otimes \mathbb{Q},~V_{\mathbb{C}}=V_{\mathbb{Z}} \otimes \mathbb{C}.
\end{equation}

\begin{remark}
In the following, torsion of the singular homology and cohomology groups are irrelevant, and we will abuse notation somewhat and denote by $H^q(\mathcal{X}_{\varphi},\mathbb{Z})$, $\text{R}^q \,\pi_{\Delta^*,*} \mathbb{Z}$, etc., the objects modulo torsion.
\end{remark}
The local system $V_{\mathbb{Z}}$ defines a locally free sheaf $\mathcal{V}$ over $\Delta^*$
\begin{equation}
\mathcal{V}:=V_{\mathbb{Z}} \otimes \mathcal{O}_{\Delta^*},
\end{equation}
with dual denoted by $\mathcal{V}^{\vee}$. For every $\varphi \in \Delta^*$, $\mathcal{X}_{\varphi}$ is a projective complex manifold, hence $H^q(\mathcal{X}_{\varphi},\mathbb{C})$ admits a Hodge decomposition \cite{Schmid}
\begin{equation}
H^q(\mathcal{X}_{\varphi},\mathbb{C}) = \oplus_{0 \leq k \leq q} \, H^{k,q-k} (\mathcal{X}_{\varphi}),
\end{equation}
which induces a Hodge filtration on $H^q(\mathcal{X}_{\varphi},\mathbb{C})$ through
\begin{equation}
F_{\varphi}^p :=\oplus_{k \geq p}\, H^{k,q-k} (\mathcal{X}_{\varphi}).
\end{equation} 
The complex vector space $F_{\varphi}^p$ varies holomorphically with respect to $\varphi$, and its union forms a holomorphic vector bundle over $\Delta^*$, whose sheaf of sections is a locally free sheaf $\mathcal{F}^p$ that induces a subsheaf filtration on $\mathcal{V}$ \cite{Schmid}. From the standard comparison isomorphism, there are the following canonical isomorphisms \cite{SteenbrinkVMHS}
\begin{equation} \label{eq:FdeltaIso}
I_{\text{B}}: \mathscr{V}^{\text{an}}_{\mathbb{C}}\vert_{\Delta^*} \xrightarrow{\sim} \mathcal{V}, ~
I_{\text{B}}:\mathscr{F}^{p,\text{an}}_{\mathbb{C}}|_{\Delta^*} \xrightarrow{\sim} \mathcal{F}^p.
\end{equation}
On the sheaf $\mathcal{V}$, there is a Gauss-Manin connection $\nabla$ 
\begin{equation}
\nabla:\mathcal{V} \rightarrow \Omega^1_{\Delta^*} \otimes_{\mathcal{O}_{\Delta^*}} \mathcal{V},
\end{equation}
which is the unique connection such that the local sections of $V_{\mathbb{C}}$ are flat \cite{Schmid}. Moreover, it satisfies the Griffiths transversality \cite{Zein,Schmid}
\begin{equation}
\nabla \mathcal{F}^p \subset \Omega^1_{\Delta^*} \otimes_{\mathcal{O}_{\Delta^*}} \mathcal{F}^{p-1}.
\end{equation}
In fact the comparison isomorphism \ref{eq:FdeltaIso} sends the connection $\nabla_{\mathbb{C}}^{\text{an}}$ (restricted to $\mathscr{V}^{\text{an}}_{\mathbb{C}}|_{\Delta^*}$) to the connection $\nabla$, hence Gauss-Manin connection is unique \cite{PetersSteenbrink,Hain,DeligneMirror,DeligneEquations}.

\subsection{Canonical extension over the singular fiber} \label{sectionDCE}

Let $\Omega_{X/S}^*(\log\,Y)$ be the complex of sheaves of relative algebraic forms over $S$ with at worst logarithmic poles along the normal crossing divisor $Y$, as before it has a filtration $F^p\,\Omega_{X/S}^*(\log\,Y)$ defined as \ref{eq:rationalfiltercomplex} \cite{SteenbrinkLMHS}. Over the curve $S$, the sheaf $\widetilde{\mathscr{V}}_{\mathbb{Q}}$ and its filtration $\widetilde{\mathscr{F}}^p_{\mathbb{Q}}$ are defined by
\begin{equation}
\begin{aligned}
\widetilde{\mathscr{V}}_{\mathbb{Q}}&:=\mathbb{R}^q \,\pi_{\mathbb{Q},*}(\Omega_{X/S}^*(\log\,Y)), \\
\widetilde{\mathscr{F}}^p_{\mathbb{Q}}&:=\mathbb{R}^q \,\pi_{\mathbb{Q},*}(F^p\,\Omega_{X/S}^*(\log\,Y)), \\
\end{aligned}
\end{equation}
which form the canonical extensions of $\mathscr{V}_{\mathbb{Q}}$ and $\mathscr{F}^p_{\mathbb{Q}}$ respectively. The Gauss-Manin connection $\nabla_{\mathbb{Q}}$ of $\mathscr{V}_{\mathbb{Q}}$ can be canonically extended to a connection $\widetilde{\nabla}_{\mathbb{Q}}$ of $\widetilde{\mathscr{V}}_{\mathbb{Q}}$ which has a logarithmic pole along the rational point $0\in S$ with a nilpotent residue, and moreover this property also determines the extensions of $\mathscr{V}_{\mathbb{Q}}$ and $\mathscr{F}^p_{\mathbb{Q}}$ uniquely \cite{DeligneMirror,SteenbrinkLMHS}. Similarly, the extensions of $\mathscr{V}_{\mathbb{C}}$, $\mathscr{V}^{\text{an}}_{\mathbb{C}}$, $\mathscr{F}^p_{\mathbb{C}}$ and $\mathscr{F}^{p,\text{an}}_{\mathbb{C}}$ are obtained from the complexifications and analytifications of $\widetilde{\mathscr{V}}_{\mathbb{Q}}$ and $\widetilde{\mathscr{F}}^p_{\mathbb{Q}}$, which will be denoted by $\widetilde{\mathscr{V}}_{\mathbb{C}}$, $\widetilde{\mathscr{V}}^{\text{an}}_{\mathbb{C}}$, $\widetilde{\mathscr{F}}^p_{\mathbb{C}}$ and $\widetilde{\mathscr{F}}^{p,\text{an}}_{\mathbb{C}}$ respectively. Over the disc $\Delta$, $\widetilde{\mathscr{V}}^{\text{an}}_{\mathbb{C}}$ and $\widetilde{\mathscr{F}}^{p,\text{an}}_{\mathbb{C}}$ can also be constructed by \cite{SteenbrinkLMHS}
\begin{equation} \label{eq:DeligneAnalyticDelta}
\begin{aligned}
\widetilde{\mathscr{V}}^{\text{an}}_{\mathbb{C}}\vert_{\Delta}&=\mathbb{R}^q \,\pi_{\Delta,*}(\Omega_{\mathcal{X}/\Delta}^*(\log\,\mathcal{X}_0)),\\
\widetilde{\mathscr{F}}^{p,\text{an}}_{\mathbb{C}} \vert_{\Delta}&=\mathbb{R}^q \,\pi_{\Delta,*}(F^p\,\Omega_{\mathcal{X}/\Delta}^*(\log\,\mathcal{X}_0)).
\end{aligned}
\end{equation}
Under the comparison isomorphism \ref{eq:FdeltaIso}, $\widetilde{\mathscr{V}}^{\text{an}}_{\mathbb{C}}\vert_{\Delta}$ and $\widetilde{\mathscr{F}}^{p,\text{an}}_{\mathbb{C}} \vert_{\Delta}$ form the canonical extensions of $\mathcal{V}$ and $\mathcal{F}^p$ that will be denoted by $\widetilde{\mathcal{V}}$ and $\widetilde{\mathcal{F}}^p$ respectively \cite{DeligneMirror,Hain,Schmid}. Let us now look at the fibers of these extensions at $0 \in S$. The fiber $\widetilde{\mathscr{V}}_{\mathbb{Q}}\vert_0$ is a rational vector space given by \cite{SteenbrinkLMHS}
\begin{equation}
\widetilde{\mathscr{V}}_{\mathbb{Q}}\vert_0:=(\widetilde{\mathscr{V}}_{\mathbb{Q}})_0 \otimes_{\mathcal{O}_{S,0}} \mathcal{O}_{S,0}/\mathfrak{m}_{S,0} = \mathbb{H}^q \,(Y, \Omega_{X/S}^*(\log\,Y)|_Y),
\end{equation}
i.e. the hypercohomology of the restriction of the complex $\Omega_{X/S}^*(\log\,Y)$ to $Y$. By Serre's GAGA, there are canonical isomorphisms
\begin{equation}
\widetilde{\mathscr{V}}^{\text{an}}_{\mathbb{C}}\vert_0 = \widetilde{\mathscr{V}}_{\mathbb{C}}\vert_0 = \widetilde{\mathscr{V}}_{\mathbb{Q}}\vert_0 \otimes_{\mathbb{Q}} \mathbb{C}.
\end{equation}
The fiber $\widetilde{\mathscr{V}}^{\text{an}}_{\mathbb{C}}\vert_0$ can also be described as the hypercohomology \cite{SteenbrinkLMHS, PetersSteenbrink}
\begin{equation}
\widetilde{\mathscr{V}}^{\text{an}}_{\mathbb{C}}\vert_0:=\mathbb{H}^q(\mathcal{X}_0,\Omega_{\mathcal{X}/\Delta}^*(\log\,\mathcal{X}_0)|_{\mathcal{X}_0}).
\end{equation}
Given a closed point $\varphi$ of $S^*$, the fiber $\widetilde{\mathscr{V}}_{\mathbb{Q}} \vert_{\varphi}$ is a vector space over the residue field $\kappa(\varphi)$ of $\varphi$ given by \cite{KatzOda, PetersSteenbrink}
\begin{equation}
\widetilde{\mathscr{V}}_{\mathbb{Q}}\vert_{\varphi}=\mathbb{H}^q \,(Y, \Omega_{X/S}^*(\log\,Y)|_{X_{\varphi}}) = H^q_{\text{dR}}(X_{\varphi}),
\end{equation}
where $X_{\varphi}$ is a variety defined over $\kappa(\varphi)$. While in the analytic case, given a point $\varphi \in \Delta^*$, the fiber $\widetilde{\mathscr{V}}^{\text{an}}_{\mathbb{C}} \vert_{\varphi}$ is \cite{SteenbrinkLMHS, PetersSteenbrink}
\begin{equation}
\widetilde{\mathscr{V}}^{\text{an}}_{\mathbb{C}}\vert_{\varphi}=\mathbb{H}^q(\mathcal{X}_{\varphi},\Omega_{\mathcal{X}/\Delta}^*(\log\,\mathcal{X}_0)|_{\mathcal{X}_{\varphi}})=H^q_{\text{dR}}(X_{\varphi}^{\text{an}}) \simeq H^q(\mathcal{X}_{\varphi},\mathbb{C})
\end{equation}  
where we have used the comparison isomorphism \ref{eq:FdeltaIso} in the last isomorphism. Choose and fix a point $\varphi_0 \in \Delta^*$, then the local system $V_{\mathbb{C}}$ is uniquely determined by a representation of the fundamental group $\pi_1(\Delta^*,\varphi_0)$ \cite{Zein, Schmid}
\begin{equation} \label{eq:MonoPsifirst}
\Psi:\pi_1(\Delta^*,\varphi_0) \rightarrow  \text{GL}(H^q(\mathcal{X}_{\varphi_0},\mathbb{C})).
\end{equation}
The fundamental group $\pi_1(\Delta^*,\varphi_0)$ is isomorphic to $\mathbb{Z}$, and let us choose and fix a generator $T$ of it, then this representation is uniquely determined by the action of $T$ on $H^q(\mathcal{X}_{\varphi_0},\mathbb{C})$. The action of $T$ on $H^q(\mathcal{X}_{\varphi_0},\mathbb{C})$ can be extended to an automorphism of the sheaf $\widetilde{\mathcal{V}}$ (or $\widetilde{\mathscr{V}}^{\text{an}}_{\mathbb{C}}\vert_{\Delta}$), which induces an automorphism $T_0$ of the fiber $\widetilde{\mathcal{V}}\vert_0$ (or $\widetilde{\mathscr{V}}^{\text{an}}_{\mathbb{C}}\vert_0$). For more details, check Proposition 11.2 of \cite{PetersSteenbrink}.  On the other hand, let $\text{Res}_0$ be the residue map from $\Omega_S^1(\log \,0)$ to $\mathbb{C}$ given by
\begin{equation}
\text{Res}_0\,(\,g(t)\,dt/t\,):=g(0).
\end{equation}
The following homomorphism from the germ $(\widetilde{\mathscr{V}}^{\text{an}}_{\mathbb{C}})_0$ to the fiber $\widetilde{\mathscr{V}}^{\text{an}}_{\mathbb{C}}\vert_0$ vanishes on the ideal $(\widetilde{\mathscr{V}}^{\text{an}}_{\mathbb{C}})_0  \otimes_{\mathcal{O}_{S,0}} \mathfrak{m}_{S,0}$
\begin{equation}
\left(\text{Res}_0 \otimes (\otimes_{\mathcal{O}_{S,0}} \mathcal{O}_{S,0}/\mathfrak{m}_{S,0})\right)\circ \nabla_{\mathbb{C}}^{\text{an}} :(\widetilde{\mathscr{V}}^{\text{an}}_{\mathbb{C}})_0 \rightarrow \widetilde{\mathscr{V}}^{\text{an}}_{\mathbb{C}}\vert_0,
\end{equation}
hence it induces an endomorphism of $\widetilde{\mathscr{V}}^{\text{an}}_{\mathbb{C}}\vert_0$ that is denoted by $N_0$ and called the residue of $\nabla_{\mathbb{C}}^{\text{an}}$ at 0 \cite{SteenbrinkLMHS}. From Theorem II 3.11 of \cite{DeligneEquations} or Corollary 11.17 of \cite{PetersSteenbrink} we have
\begin{equation} \label{eq:T0N0Exp}
T_0=\exp(-2 \pi i\, N_0)
\end{equation}  
Furthermore, Corollary 11.19 of \cite{PetersSteenbrink} tells us that all the eigenvalues of $N_0$ are integers, therefore all the eigenvalues of $T_0$ are 1, which immediately implies that $T_0$ is unipotent.

\subsection{Limit mixed Hodge structures} \label{SectionlimitMHS}

We now give an overview of Steenbrink's construction of limit mixed Hodge structures, and the readers are referred to \cite{SteenbrinkLMHS, Illusie} for more complete treatments. First recall that we have fixed a universal cover $\widetilde{\Delta}^*$ of $\Delta^*$ in Remark \ref{FixedParameter}.  The nearby cycle sheaf $\text{R}\Psi_{\pi_{\Delta}}(\Lambda)$, where $\Lambda$ is $\mathbb{Z}$, $\mathbb{Q}$ or $\mathbb{C}$, is a complex of sheaves over the singular fiber $\mathcal{X}_0$ \cite{Illusie, SteenbrinkLMHS}. In the paper \cite{SteenbrinkLMHS}, Steenbrink has constructed the following data.
\begin{enumerate}
\item A representative of $\text{R}\Psi_{\pi_{\Delta}} \mathbb{Z}$ in the derived category $D^+(\mathcal{X}_0,\mathbb{Z})$.

\item A representative of $(\text{R}\Psi_{\pi_{\Delta}} \mathbb{Q},\,W_*)$  in the filtered derived category $D^+F(\mathcal{X}_0,\mathbb{Q})$, where $W_*$ is an increasing filtration of $\text{R}\Psi_{\pi_{\Delta}} \mathbb{Q}$ and
\begin{equation}
\text{R}\Psi_{\pi_{\Delta}} \mathbb{Q} \simeq \text{R}\Psi_{\pi_{\Delta}} \mathbb{Z} \otimes \mathbb{Q} ~\text{in}~D^+(\mathcal{X}_0,\mathbb{Q}).
\end{equation}

\item A representative of $(\text{R}\Psi_{\pi_{\Delta}} \mathbb{C},W_*,F^*)$ in the bifiltered derived category $D^+F_2(\mathcal{X}_0,\mathbb{C})$, where $W_*$ is an increasing filtration of $\text{R}\Psi_{\pi_{\Delta}} \mathbb{C}$ and $F^*$ is a decreasing filtration of $\text{R}\Psi_{\pi_{\Delta}} \mathbb{C}$ such that
\begin{equation}
(\text{R}\Psi_{\pi_{\Delta}} \mathbb{C},W_*) \simeq (\text{R}\Psi_{\pi_{\Delta}} \mathbb{Q} \otimes \mathbb{C},W_*) ~\text{in}~D^+F(\mathcal{X}_0,\mathbb{C}).
\end{equation}

\end{enumerate}
These constructions depend on the choice of $\varphi$, $\log \, \varphi$ and a universal cover $\widetilde{\Delta}^*$ of $\Delta^*$ \cite{Illusie, SteenbrinkLMHS}, all of which have been fixed in Remark \ref{FixedParameter}. From \cite{SteenbrinkLMHS}, we have the following important theorem.
\begin{theorem}
The data
\begin{equation} \label{eq:LimitMHC}
\left(\text{R}\Psi_{\pi_{\Delta}} \mathbb{Z},(\text{R}\Psi_{\pi_{\Delta}} \mathbb{Q},W_*),(\text{R}\Psi_{\pi_{\Delta}} \mathbb{C},W_*,F^*)\right)
\end{equation}
forms a cohomological mixed Hodge complex of sheaves in the sense of Deligne \cite{DeligneIII}.
\end{theorem}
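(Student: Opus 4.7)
The plan is to verify, one by one, Deligne's axioms for a cohomological mixed Hodge complex (CMHC) on the singular fiber $\mathcal{X}_0$, following the classical Steenbrink strategy. Most of what is required -- bounded constructibility, the comparison between the $\mathbb{Z}$, $\mathbb{Q}$ and $\mathbb{C}$ coefficient versions, and the existence of the filtrations $W_*$ and $F^*$ with the right compatibilities -- is built directly into the constructions summarised in items (1)--(3) preceding the theorem. The real content to verify is that for each integer $k$, the pair $(\mathrm{Gr}^W_k\,\mathrm{R}\Psi_{\pi_\Delta}\mathbb{Q},\, F^*)$ forms a cohomological Hodge complex of pure weight $k$ on $\mathcal{X}_0$; this, combined with degeneration of the weight spectral sequence at $E_2$, is what ultimately yields the limit mixed Hodge structure on each $H^q(\mathcal{X}_\varphi)$.

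The first substantive step is to exhibit an explicit model on the complex side. Starting from the relative log de Rham complex $\Omega^*_{\mathcal{X}/\Delta}(\log\,\mathcal{X}_0)$ equipped with its pole-order filtration, I form Steenbrink's double complex $A^{p,q}$ -- a suitable quotient of shifts of this log complex by the pole filtration -- with horizontal differential $d'$ coming from the exterior derivative and vertical differential $d''$ given by wedging with $d\log\,\varphi$. Its associated simple complex, restricted to $\mathcal{X}_0$, represents $\mathrm{R}\Psi_{\pi_\Delta}\mathbb{C}$, and inherits both a weight filtration $W_*$ built from the pole filtration shifted by the diagonal index and the Hodge filtration induced from $F^p\Omega^*_{\mathcal{X}/\Delta}(\log\,\mathcal{X}_0)$. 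The decisive computation, carried out by iterated Poincar\'e residues, identifies $\mathrm{Gr}^W_k\,\mathrm{R}\Psi_{\pi_\Delta}\mathbb{C}$ with a direct sum of de Rham complexes of the strata $Y^{(r)}$ -- disjoint unions of $r$-fold intersections of the smooth components of $\mathcal{X}_0$ -- suitably shifted and Tate-twisted so that the result is of pure weight $k$. Since each $Y^{(r)}$ is smooth projective (as $\mathcal{X}_0$ is reduced with nonsingular normal-crossings components), classical Hodge theory immediately supplies the required pure Hodge complex structure on each summand, verifying the purity axiom on the complex side.

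The main obstacle, and what requires the most care, is the descent of the weight filtration to $\mathbb{Q}$-coefficients: the pole-order filtration lives purely on the analytic side, whereas the CMHC axioms demand $W_*$ already on $\mathrm{R}\Psi_{\pi_\Delta}\mathbb{Q}$. I would address this by defining $W_*$ on the rational side as the monodromy filtration of a nilpotent endomorphism $N$ of $\mathrm{R}\Psi_{\pi_\Delta}\mathbb{Q}$ whose action on hypercohomology recovers the logarithm $N_0$ of the unipotent monodromy from equation~\ref{eq:T0N0Exp}. Such a filtration is uniquely determined by the requirements that $N$ shifts weight by $-2$ and that $N^k$ induce isomorphisms $\mathrm{Gr}^W_{q+k}\xrightarrow{\sim}\mathrm{Gr}^W_{q-k}$ on each $H^q$. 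The compatibility check between this rational monodromy filtration and the analytic pole filtration, classically performed by expressing $N_0$ in terms of the residue of the Gauss-Manin connection on $\Omega^*_{\mathcal{X}/\Delta}(\log\,\mathcal{X}_0)$, closes the gap. Once rationality is established, the comparison isomorphisms in the filtered derived categories $D^+F(\mathcal{X}_0,\mathbb{Q})$ and $D^+F_2(\mathcal{X}_0,\mathbb{C})$ combine with the already-verified pure-Hodge-complex property on $\mathrm{Gr}^W$ to finish the verification of all CMHC axioms.
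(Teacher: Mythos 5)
Your proposal reconstructs precisely the argument that the paper itself does not spell out: its proof of this theorem is a one-line citation to Chapter 11 of \cite{PetersSteenbrink}, and what you describe --- the double complex $A^{p,q}$ built from the log de Rham complex modulo the pole-order filtration, the identification of $\mathrm{Gr}^W_k$ with Tate-twisted, shifted de Rham complexes of the smooth strata $Y^{(r)}$ via iterated Poincar\'e residues, and the resulting purity --- is exactly Steenbrink's construction as presented there. So the approach is the same, and the outline is sound.

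One caveat concerns the step you yourself single out as the main obstacle. The monodromy weight filtration is canonically characterized only on an object of an abelian category equipped with a nilpotent endomorphism --- e.g.\ on each hypercohomology group $\mathbb{H}^q(\mathcal{X}_0,\mathrm{R}\Psi_{\pi_\Delta}\mathbb{Q})$ with $N_0$, centered at $q$. The CMHC axioms, however, require $W_*$ as a filtration of the complex of sheaves $\mathrm{R}\Psi_{\pi_\Delta}\mathbb{Q}$ itself, i.e.\ an object of $D^+F(\mathcal{X}_0,\mathbb{Q})$ matching the pole-order filtration under the comparison quasi-isomorphism; a nilpotent endomorphism of a complex does not by itself determine such a filtration in the filtered derived category, so the uniqueness statement you invoke does not apply at the level where it is needed. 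This is exactly the point at which Steenbrink's original argument \cite{SteenbrinkLMHS} was incomplete; the corrected treatments construct an explicit rational incarnation of the weight-filtered complex (a rational analogue of $A^{\bullet\bullet}$ assembled from Tate-twisted constant sheaves on the strata $Y^{(r)}$ with restriction and Gysin maps, as in \cite{PetersSteenbrink}, or the monodromy filtration taken in the abelian category of perverse sheaves \`a la M.~Saito). Your sketch points at the right fix but does not supply it.
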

\begin{proof}
See Chapter 11 of the book \cite{PetersSteenbrink}.
\end{proof}

Let us denote the triangulated category of $\mathbb{Z}$-mixed Hodge complexes by $D^*_{\textbf{MHS}_{\mathbb{Z}}}$, where $*$ is the boundedness condition, e.g. $*$ can be ${\emptyset}$, $+$, $-$ or $b$. From Proposition 8.1.7 of \cite{DeligneIII}, the mixed Hodge complex of sheaves \ref{eq:LimitMHC} defines an object of $D_{\textbf{MHS}_{\mathbb{Z}}}^+$
\begin{equation} \label{eq:LimitZZMHC}
\left(\text{R}\Gamma(\text{R}\Psi_{\pi_{\Delta}} \mathbb{Z}),\text{R}\Gamma(\text{R}\Psi_{\pi_{\Delta}} \mathbb{Q},W_*),\text{R}\Gamma(\text{R}\Psi_{\pi_{\Delta}} \mathbb{C},W_*,F^*)\right).
\end{equation}
From \cite{BeilinsonMHC}, for every $q \in \mathbb{Z}$ there exists a cohomological functor $\underline{H}^q$ from $D_{\textbf{MHS}_{\mathbb{Z}}}^+$ to $\textbf{MHS}_{\mathbb{Z}}$
\begin{equation}
\underline{H}^q:D_{\textbf{MHS}_{\mathbb{Z}}}^+ \rightarrow \textbf{MHS}_{\mathbb{Z}},
\end{equation}  
which sends the mixed Hodge complex \ref{eq:LimitZZMHC} to the MHS
\begin{equation} \label{eq:LimitMHCMHS}
(\underline{H}^q\circ\text{R}\Gamma(\text{R}\Psi_{\pi_{\Delta}} \mathbb{Z}),\underline{H}^q \circ \text{R}\Gamma(\text{R}\Psi_{\pi_{\Delta}} \mathbb{Q},W_*),\underline{H}^q \circ \text{R}\Gamma(\text{R}\Psi_{\pi_{\Delta}} \mathbb{C},W_*,F^*)).
\end{equation}
Furthermore, the MHS \ref{eq:LimitMHCMHS} is also described by
\begin{equation} \label{eq:OriginalLMHS}
\left(\mathbb{H}^q(\mathcal{X}_0,\text{R}\Psi_{\pi_{\Delta}} \mathbb{Z}),( \mathbb{H}^q(\mathcal{X}_0,\text{R}\Psi_{\pi_{\Delta}} \mathbb{Q}),W_*),(\mathbb{H}^q(\mathcal{X}_0,\text{R}\Psi_{\pi_{\Delta}} \mathbb{C}),W_*,F^*)\right).
\end{equation}
Steenbrink proves the following important proposition in \cite{SteenbrinkLMHS}.
\begin{proposition} \label{FiberComplexIso}
There is a quasi-isomorphism between the complexes of sheaves $\text{R}\Psi_{\pi_{\Delta}} \mathbb{C}$ and $\Omega_{\mathcal{X}/\Delta}^*(\log\,\mathcal{X}_0)|_{\mathcal{X}_0}$ in the derived category $D^+(\mathcal{X}_0,\mathbb{C})$, which depends on the choices of $\varphi$ and $\log \, \varphi$.
\end{proposition}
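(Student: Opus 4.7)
The plan is to construct an explicit morphism between the two complexes and verify it is a quasi-isomorphism by reducing to a local computation at each point of the normal crossings divisor $\mathcal{X}_0$. A natural first attempt would be to restrict Deligne's quasi-isomorphism $Rj_*\mathbb{C}_{\mathcal{X}^*} \simeq \Omega^*_{\mathcal{X}}(\log \mathcal{X}_0)$ (where $j:\mathcal{X}^*\hookrightarrow \mathcal{X}$) to $\mathcal{X}_0$; however, $R\Psi_{\pi_{\Delta}}\mathbb{C}$ is genuinely different from $i^*Rj_*\mathbb{C}$ due to the nontrivial monodromy around the origin, so a Koszul-style refinement is needed. The explicit dependence on $\varphi$ and on the chosen branch of $\log\varphi$ will enter through this refinement.

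Next, I would introduce a complex on $\mathcal{X}$ that actually computes the nearby cycles. Using the description $R\Psi_{\pi_{\Delta}}\mathbb{C}=i^{-1}R\tilde{k}_*\mathbb{C}$, where $\tilde{k}\colon\widetilde{\mathcal{X}}^*:=\mathcal{X}^*\times_{\Delta^*}\widetilde{\Delta}^*\to \mathcal{X}$ is the pullback along the chosen universal cover of $\Delta^*$, the pullback of $\varphi$ acquires a single-valued logarithm $u$. Then one forms
\begin{equation*}
K^*:=\Omega^*_{\mathcal{X}}(\log \mathcal{X}_0)\otimes_{\mathbb{C}}\mathbb{C}[u],
\end{equation*}
with differential $d(\omega\otimes u^j)=d\omega\otimes u^j+j\,(d\varphi/\varphi)\wedge\omega\otimes u^{j-1}$, so that $u$ formally models $\log\varphi/(2\pi i)$. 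The key claim is that $K^*$ is quasi-isomorphic to $R\tilde{k}_*\mathbb{C}$ on $\mathcal{X}$, whence $K^*|_{\mathcal{X}_0}$ computes $R\Psi_{\pi_{\Delta}}\mathbb{C}$.

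To land in the relative log complex, I exploit the short exact sequence
\begin{equation*}
0\to \Omega^{*-1}_{\mathcal{X}/\Delta}(\log \mathcal{X}_0)\wedge \tfrac{d\varphi}{\varphi} \to \Omega^*_{\mathcal{X}}(\log \mathcal{X}_0)\to \Omega^*_{\mathcal{X}/\Delta}(\log \mathcal{X}_0)\to 0.
\end{equation*}
Composing the projection $K^*\to \Omega^*_{\mathcal{X}}(\log \mathcal{X}_0)$ that sets $u=0$ with the quotient map in the sequence above, then restricting along $i$, yields a natural morphism $K^*|_{\mathcal{X}_0}\to \Omega^*_{\mathcal{X}/\Delta}(\log \mathcal{X}_0)|_{\mathcal{X}_0}$ in $D^+(\mathcal{X}_0,\mathbb{C})$. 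Verifying this is a quasi-isomorphism is then a local question. Near a point $x$ lying on the transverse intersection of $k$ branches of $\mathcal{X}_0$, I choose coordinates so that $\mathcal{X}\cong \Delta^n$ with $\varphi = z_1\cdots z_k$; the Milnor fibre is homotopy equivalent to $(S^1)^{k-1}$, giving $(R\Psi_{\pi_{\Delta}}\mathbb{C})_x\cong \Lambda^*\mathbb{C}^{k-1}$, while a direct computation with the Koszul differential shows that $\Omega^*_{\mathcal{X}/\Delta}(\log \mathcal{X}_0)|_{\mathcal{X}_0,x}$ is the exterior algebra on $dz_1/z_1,\ldots,dz_k/z_k$ modulo $\sum dz_i/z_i=0$, also isomorphic to $\Lambda^*\mathbb{C}^{k-1}$, and the morphism induces this isomorphism on cohomology sheaves.

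The hard part will be rigorously establishing that the formal Koszul complex $K^*$ really computes $R\tilde{k}_*\mathbb{C}$: the variable $u$ is a stand-in for a multivalued function, and to make the identification one must show that $K^*$ is quasi-isomorphic to an acyclic resolution of $\mathbb{C}_{\widetilde{\mathcal{X}}^*}$ pushed forward to $\mathcal{X}$. This can be done by a Leray-type spectral sequence argument filtering by powers of $u$ and comparing with a Godement resolution of $\mathbb{C}_{\widetilde{\mathcal{X}}^*}$, but tracking the explicit dependence on the chosen $\log\varphi$ through this spectral sequence is the delicate technical point that underlies Steenbrink's original construction.
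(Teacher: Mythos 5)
Your argument is correct and is essentially the proof the paper points to: Proposition \ref{FiberComplexIso} is proved in the text only by citation to Chapter 11 of Peters--Steenbrink, and what you have written out --- the Koszul-type complex $\Omega^{\bullet}_{\mathcal{X}}(\log\mathcal{X}_0)\otimes_{\mathbb{C}}\mathbb{C}[u]$ modelling $\text{R}\tilde{k}_*\mathbb{C}$ on the universal cover (whence the dependence on $\varphi$ and $\log\varphi$), the projection onto the relative log complex, and the stalkwise comparison with $H^{\bullet}((S^1)^{k-1})\simeq\Lambda^{\bullet}\mathbb{C}^{k-1}$ at a $k$-fold crossing --- is exactly Steenbrink's construction reproduced there. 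The one small imprecision is that ``setting $u=0$'' is not by itself a chain map into $\Omega^{\bullet}_{\mathcal{X}}(\log\mathcal{X}_0)$ (it fails on the terms $j\,(d\varphi/\varphi)\wedge\omega\otimes u^{j-1}$); only the composite with the quotient onto $\Omega^{\bullet}_{\mathcal{X}/\Delta}(\log\mathcal{X}_0)$ is, which is how you in fact use it.
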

\begin{proof}
See Chapter 11 of the book \cite{PetersSteenbrink}.
\end{proof}

Hence from this proposition we have 
\begin{equation} \label{eq:SingularfiberIso}
\widetilde{\mathscr{V}}^{\text{an}}_{\mathbb{C}}\vert_0=\mathbb{H}^q(\mathcal{X}_0,\Omega_{\mathcal{X}/\Delta}^*(\log\,\mathcal{X}_0)|_{\mathcal{X}_0}) \simeq \mathbb{H}^q(\mathcal{X}_0, \text{R}\Psi_{\pi_{\Delta}} (\mathbb{C})),
\end{equation}
and the MHS \ref{eq:LimitMHCMHS} is called the limit mixed Hodge structure \cite{PetersSteenbrink}. Steenbrink also constructs a morphism $\nu_N$ on $\text{R}\Psi_{\pi_{\Delta}} \mathbb{C}$ of the form \cite{SteenbrinkLMHS}
\begin{equation} \label{eq:NuNQ}
\nu_N:\text{R}\Psi_{\pi_{\Delta}} \mathbb{Q} \rightarrow \text{R}\Psi_{\pi_{\Delta}} \mathbb{Q}(-1),
\end{equation}
where $(-1)$ is the Tate twist by $\mathbb{Q}(-1)$ in $D^+(\mathcal{X}_0,\mathbb{Q})$. After taking hypercohomology, the morphism \ref{eq:NuNQ} induces the homomorphism $N_0$ defined in last section
\begin{equation} \label{eq:Nmorphism1}
N_0:\mathbb{H}^n(\mathcal{X}_0, \text{R}\Psi_{\pi_{\Delta}} (\mathbb{Q})) \rightarrow \mathbb{H}^n(\mathcal{X}_0, \text{R}\Psi_{\pi_{\Delta}} (\mathbb{Q}))(-1),
\end{equation}
which also determines the weight filtration $W_*$ in \ref{eq:OriginalLMHS} uniquely \cite{Illusie, SteenbrinkLMHS, SteenbrinkVMHS}. From \cite{SteenbrinkLMHS, SteenbrinkVMHS}, the Hodge filtration $F^*$ in \ref{eq:OriginalLMHS} is in fact the filtration on $\widetilde{\mathscr{V}}^{\text{an}}_{\mathbb{C}}\vert_0$ given by the fiber $\widetilde{\mathscr{F}}^{p,\text{an}}_{\mathbb{C}} \vert_0$
\begin{equation}
F^p=\widetilde{\mathscr{F}}^{p,\text{an}}_{\mathbb{C}} \vert_0 = \widetilde{\mathscr{F}}^p_{\mathbb{C}} \vert_0=\widetilde{\mathscr{F}}^p_{\mathbb{Q}} \vert_0 \otimes_{\mathbb{Q}} \mathbb{C}.
\end{equation}

\subsection{Limit mixed Hodge complex}

The $\mathbb{Z}$-mixed Hodge complex \ref{eq:LimitZZMHC} naturally defines a $\mathbb{Q}$-mixed Hodge complex $\mathbf{Y}^{\bullet}$ by forgetting its integral structure
\begin{equation} \label{eq:LimitQQMHC}
\mathbf{Y}^{\bullet}:=\big(\text{R}\Gamma(\text{R}\Psi_{\pi_{\Delta}} \mathbb{Q}),\text{R}\Gamma(\text{R}\Psi_{\pi_{\Delta}} \mathbb{Q},W_*),\text{R}\Gamma(\text{R}\Psi_{\pi_{\Delta}} \mathbb{C},W_*,F^*)\big),
\end{equation}
whose hypercohomology is the underlying rational MHS of \ref{eq:OriginalLMHS}
\begin{equation} \label{eq:RationalLMHS}
\underline{H}^q(\mathbf{Y}^{\bullet})=\big(\mathbb{H}^q(\mathcal{X}_0,\text{R}\Psi_{\pi_{\Delta}} \mathbb{Q}), ( \mathbb{H}^q(\mathcal{X}_0,\text{R}\Psi_{\pi_{\Delta}} \mathbb{Q}),W_*),(\mathbb{H}^q(\mathcal{X}_0,\text{R}\Psi_{\pi_{\Delta}} \mathbb{C}),W_*,F^*)\big).
\end{equation}

Now we need the following lemma from \cite{SGA7}. 
\begin{lemma} \label{SGAlemma}
$\mathbb{H}^q(\mathcal{X}_0,\text{R}\Psi_{\pi_{\Delta}} \Lambda)$ is isomorphic to $H^q(\mathcal{X}_{\varphi},\Lambda)$ where $\Lambda$ is $\mathbb{Z}$, $\mathbb{Q}$ or $\mathbb{C}$.
\end{lemma}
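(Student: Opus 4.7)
The plan is to establish the isomorphism as a chain of three natural isomorphisms obtained by unwinding the construction of the nearby cycle sheaf and applying two contractibility arguments. Writing $i:\mathcal{X}_0 \hookrightarrow \mathcal{X}$ for the closed inclusion, $j:\mathcal{X}^* \hookrightarrow \mathcal{X}$ for the open complement, and $p:\widetilde{\mathcal{X}}^* := \mathcal{X}^* \times_{\Delta^*} \widetilde{\Delta}^* \to \mathcal{X}^*$ for the base change of the universal cover fixed in Remark \ref{FixedParameter}, the definition gives $\text{R}\Psi_{\pi_{\Delta}}\Lambda \simeq i^{-1}\text{R}(j\circ p)_{*}\Lambda$. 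The aim is to prove
\begin{equation*}
\mathbb{H}^q\bigl(\mathcal{X}_0, \text{R}\Psi_{\pi_{\Delta}}\Lambda\bigr) \;\simeq\; \mathbb{H}^q\bigl(\mathcal{X}, \text{R}(j\circ p)_{*}\Lambda\bigr) \;\simeq\; H^q\bigl(\widetilde{\mathcal{X}}^*, \Lambda\bigr) \;\simeq\; H^q\bigl(\mathcal{X}_{\varphi}, \Lambda\bigr).
\end{equation*}

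The middle isomorphism is the cleanest: since $(j\circ p)_*$ composed with global sections on $\mathcal{X}$ equals global sections on $\widetilde{\mathcal{X}}^*$, the Leray spectral sequence (or the Grothendieck composition of derived functors) gives $\mathbb{H}^q(\mathcal{X}, \text{R}(j\circ p)_{*}\Lambda) = H^q(\widetilde{\mathcal{X}}^*, \Lambda)$. The rightmost isomorphism uses Ehresmann: since $\pi_{\Delta}$ is proper and smooth over $\Delta^*$, the base-changed map $\widetilde{\mathcal{X}}^* \to \widetilde{\Delta}^*$ is a proper locally trivial $C^\infty$-fibration with fiber diffeomorphic to $\mathcal{X}_{\varphi}$. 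Since $\widetilde{\Delta}^*$ is the upper half plane and hence contractible, $\widetilde{\mathcal{X}}^*$ is homotopy equivalent to any such fiber, yielding the identification with $H^q(\mathcal{X}_{\varphi}, \Lambda)$.

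The leftmost isomorphism is the main technical obstacle and is the arithmetic content of the lemma. The approach is to use the standard formula $R\Gamma(\mathcal{X}_0, i^{-1}\mathcal{G}) = \varinjlim_{U \supset \mathcal{X}_0} R\Gamma(U, \mathcal{G}|_U)$, valid for the closed inclusion of the paracompact $\mathcal{X}_0$, and then argue that the system of open neighborhoods of $\mathcal{X}_0$ is cofinal (in a cohomological sense) with $\mathcal{X}$ itself when pushforwards from $\widetilde{\mathcal{X}}^*$ are in play. This cofinality rests on the existence of a strong deformation retraction of $\mathcal{X}$ onto $\mathcal{X}_0$: since $\pi_{\Delta}$ is proper and $\mathcal{X}_0$ is a reduced normal crossings divisor, Clemens' construction of a collar/tubular neighborhood provides such a retraction, which, crucially, is compatible with the fibration structure — the retraction $r:\mathcal{X}\to \mathcal{X}_0$ carries each fibered tubular neighborhood $U$ of $\mathcal{X}_0$ to itself, so that $r^{-1}(U)\cap\mathcal{X}^* = U\cap\mathcal{X}^*$ and $(jp)^{-1}(U)$ is homotopy equivalent to $\widetilde{\mathcal{X}}^*$. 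The hard part is verifying that this retraction indeed induces a quasi-isomorphism $R\Gamma(U, \text{R}(j\circ p)_{*}\Lambda|_U) \xrightarrow{\sim} R\Gamma(\mathcal{X}, \text{R}(j\circ p)_{*}\Lambda)$ and that this is stable in the colimit over $U$; with Clemens' retraction in hand, this reduces to a local check in the NCD models. Once this is done, combining the three isomorphisms gives the result uniformly for $\Lambda \in \{\mathbb{Z}, \mathbb{Q}, \mathbb{C}\}$, since no step uses the coefficient ring.
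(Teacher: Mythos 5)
Your chain of three isomorphisms is the standard argument for this comparison, and it is worth saying at the outset that the paper itself offers no proof at all: the lemma is simply quoted from \cite{SGA7} (the complex-analytic version is in \cite{PetersSteenbrink}). The middle and rightmost isomorphisms in your display are correct as stated. The one place where your write-up is both incomplete and heavier than necessary is the leftmost isomorphism, which you correctly flag as "the hard part" but then defer to Clemens' retraction and an unspecified "local check in the NCD models." That detour is not needed, and as written it leaves a gap (in particular the identity $r^{-1}(U)\cap\mathcal{X}^*=U\cap\mathcal{X}^*$ does not parse, since $r$ takes values in $\mathcal{X}_0$).

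The leftmost step closes with the same contractibility argument you already use on the right. Since $\pi_{\Delta}$ is proper and the discs $\Delta_{\epsilon}$ form a neighborhood basis of $0$, the saturated opens $\pi_{\Delta}^{-1}(\Delta_{\epsilon})$ are cofinal among open neighborhoods of the compact set $\mathcal{X}_0$. The colimit formula for $i^{-1}$ therefore gives
\begin{equation*}
\mathbb{H}^q\bigl(\mathcal{X}_0,\text{R}\Psi_{\pi_{\Delta}}\Lambda\bigr)\;\simeq\;\varinjlim_{\epsilon}\, H^q\bigl(\widetilde{\mathcal{X}}^*_{\epsilon},\Lambda\bigr),
\end{equation*}
where $\widetilde{\mathcal{X}}^*_{\epsilon}$ denotes the part of $\widetilde{\mathcal{X}}^*$ lying over $\Delta^*_{\epsilon}$, using that restriction of $\text{R}(j\circ p)_*\Lambda$ to an open set computes cohomology of its preimage. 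Each $\widetilde{\mathcal{X}}^*_{\epsilon}$ is a proper $C^{\infty}$-fibration over the preimage of $\Delta^*_{\epsilon}$ in $\widetilde{\Delta}^*$, which is a half-plane and hence contractible, so every transition map in the colimit, and the map to $H^q(\mathcal{X}_{\varphi},\Lambda)$, is an isomorphism by the same Ehresmann argument. This eliminates Clemens' retraction entirely; note also that the reduced normal crossings hypothesis on $\mathcal{X}_0$ plays no role in this lemma, only properness of $\pi_{\Delta}$ and smoothness over $\Delta^*$. With that replacement your proof is complete and, as you observe, uniform in $\Lambda$.
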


Therefore $\mathbb{H}^q(\mathcal{X}_0,\text{R}\Psi_{\pi_{\Delta}} \mathbb{Q})$ is $0$ when $q <0$ or $q > 2\,\text{dim}\,\mathcal{X}_{\varphi}$, which immediately implies that $\underline{H}^q(\mathbf{Y}^{\bullet})$ is also $0$, thus $\mathbf{Y}^{\bullet}$ is essentially an object of $D^b_{\textbf{MHS}_{\mathbb{Q}}}$. From Theorem 3.4 of \cite{BeilinsonMHC}, the natural functor $D^b(\textbf{MHS}_{\mathbb{Q}}) \hookrightarrow D^b_{\textbf{MHS}_{\mathbb{Q}}}$ is an equivalence of categories, through which  $\mathbf{Y}^{\bullet}$ determines a complex $\mathbf{Z}^{\bullet}$ of $D^b(\textbf{MHS}_{\mathbb{Q}})$ such that
\begin{equation}  \label{eq:LimitHCQQMHC}
H^q(\mathbf{Z}^{\bullet})=\underline{H}^q(\mathbf{Y}^{\bullet}),\, \forall \, q \in \mathbb{Z}.
\end{equation}
In Section \ref{LimitMixedMotive}, we will construct a mixed motive of $\textbf{DM}_{\text{gm}}(\mathbb{Q},\mathbb{Q})$ whose Hodge realisation is conjectured to be isomorphic to $\mathbf{Z}^{\bullet}$.

\subsection{Schmid's construction of limit MHS}
Before we discuss Schmid's construction of limit MHS \cite{Schmid}, we need to introduce Deligne's canonical extension \cite{DeligneMirror,Hain,Zein,DeligneEquations}. From the assumptions on the fibration \ref{eq:FibrationSRational}, the operator $\Psi(T)$ is unipotent, so let us define $N$ as \cite{MarkGross}
\begin{equation} \label{eq:Ndefn}
N:=\log \Psi(T).
\end{equation}

\begin{remark}
Some literature instead defines $N$ as $-\log \Psi(T)/(2 \pi  i)$, which is  in accordance with formula \ref{eq:T0N0Exp}. However the definition \ref{eq:Ndefn} will make the computations of the limit MHS simpler.
\end{remark}

Suppose $\varphi_0$ is an arbitrary point of $\Delta^*$. Every element $\xi$ of $H^q(\mathcal{X}_{\varphi_0},\mathbb{C})$ extends to a multi-valued section $\xi(\varphi)$ of the local system $V_{\mathbb{C}}$, and it induces a single-valued section $\widehat{\xi}(\varphi)$ of $\mathcal{V}$ defined by \cite{DeligneMirror,Hain,Zein,DeligneEquations}
\begin{equation}
\widehat{\xi}(\varphi):=\exp\, (-\frac{\log \varphi}{2 \pi i}N) \,\xi(\varphi).
\end{equation}
Suppose $\{\sigma^a \}$ form a basis of $H^q(\mathcal{X}_{\varphi_0},\mathbb{C})$, then the induced sections $\{\widehat{\sigma}^a(\varphi) \}$ will form a frame of $\mathcal{V}$ that defines a trivialisation of $\mathcal{V}$ over $\Delta^*$. This trivialisation naturally induces an extension of $\mathcal{V}$ to a locally free sheaf $\widetilde{\mathcal{V}}$ over $\Delta$, while the sheaf $\mathcal{F}^p$ also extends to a locally free sheaf $\widetilde{\mathcal{F}}^p$ over $\Delta$ \cite{DeligneMirror,Hain,Zein,DeligneEquations,Schmid}. This extension is called Deligne's canonical extension.  
\begin{proposition}
The isomorphisms in \ref{eq:FdeltaIso} extend to the following isomorphisms
\begin{equation} 
I_{\text{B}}:\widetilde{\mathscr{V}}^{\text{an}}_{\mathbb{C}}\vert_{\Delta} \xrightarrow{\sim} \widetilde{\mathcal{V}},~ 
I_{\text{B}}:\widetilde{\mathscr{F}}^{p,\text{an}}_{\mathbb{C}}|_{\Delta^*} \xrightarrow{\sim} \widetilde{\mathcal{F}}^p,
\end{equation}
which induce isomorphisms between their fibers over 0
\begin{equation} \label{eq:VtildeIsoToAna}
I_{\text{B}}:\widetilde{\mathscr{V}}^{\text{an}}_{\mathbb{C}}\vert_0  \xrightarrow{\sim} \widetilde{\mathcal{V}}\vert_0,~
I_{\text{B}}:\widetilde{\mathscr{F}}^{p,\text{an}}_{\mathbb{C}}|_0 \xrightarrow{\sim} \widetilde{\mathcal{F}}^p \vert_0.
\end{equation}
\end{proposition}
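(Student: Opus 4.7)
The plan is to invoke the uniqueness of Deligne's canonical extension. Both sheaves at issue are locally free extensions of $\mathcal{V}$ across the puncture $0\in\Delta$, and the crucial observation is that each is singled out by the condition that the Gauss--Manin connection acquire at most a logarithmic pole at $0$ with nilpotent residue whose eigenvalues lie in the fundamental domain $[0,1) \subset \mathbb{C}$. For $\widetilde{\mathscr{V}}^{\text{an}}_{\mathbb{C}}\vert_{\Delta}$ this property was already recorded in Section~\ref{sectionDCE}; for $\widetilde{\mathcal{V}}$ it is manifest from the trivialising frame $\{\widehat{\sigma}^a(\varphi)\}$, in which the connection matrix is proportional to $N\, d\varphi/\varphi$ with $N$ nilpotent.

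First I would apply Theorem II.5.2 of \cite{DeligneEquations}, which states that a locally free extension to $\Delta$ of a flat bundle on $\Delta^*$ with regular singularity at $0$ and nilpotent residue is unique up to canonical isomorphism. Since $I_{\text{B}}$ on $\Delta^*$ already identifies $\nabla^{\text{an}}_{\mathbb{C}}$ with $\nabla$ (as recorded just after \ref{eq:FdeltaIso}), this uniqueness forces $I_{\text{B}}$ to extend uniquely across $0$ to an isomorphism $\widetilde{\mathscr{V}}^{\text{an}}_{\mathbb{C}}\vert_{\Delta} \xrightarrow{\sim} \widetilde{\mathcal{V}}$ of coherent $\mathcal{O}_{\Delta}$-modules. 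For the Hodge filtration, I would rely on Schmid's nilpotent orbit theorem \cite{Schmid}, which gives that $\mathcal{F}^p$ extends to a locally free subsheaf $\widetilde{\mathcal{F}}^p$ of $\widetilde{\mathcal{V}}$. On the Steenbrink side, $\widetilde{\mathscr{F}}^{p,\text{an}}_{\mathbb{C}}$ is locally free by the construction \ref{eq:DeligneAnalyticDelta}, which relies on the degeneration of the Hodge-to-de Rham spectral sequence for the logarithmic de Rham complex $\Omega_{\mathcal{X}/\Delta}^*(\log\,\mathcal{X}_0)$. Since the already-extended $I_{\text{B}}$ carries $\widetilde{\mathscr{F}}^{p,\text{an}}_{\mathbb{C}}\vert_{\Delta^*}$ to $\mathcal{F}^p$, and both $\widetilde{\mathscr{F}}^{p,\text{an}}_{\mathbb{C}}$ and $\widetilde{\mathcal{F}}^p$ are the unique locally free coherent subsheaves of their respective ambient extensions agreeing with this common restriction on the dense open $\Delta^*$, they must be identified by the extended $I_{\text{B}}$.

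The fiber statements in \ref{eq:VtildeIsoToAna} then follow at once by tensoring the isomorphism of locally free sheaves with the residue field $\mathcal{O}_{\Delta,0}/\mathfrak{m}_{\Delta,0} = \mathbb{C}$. The principal technical obstacle is the filtered assertion: the local freeness of $\widetilde{\mathcal{F}}^p \subset \widetilde{\mathcal{V}}$ is the deep input from Schmid's nilpotent orbit theorem, without which one could only identify saturations of coherent subsheaves rather than the Hodge bundles themselves. The unfiltered statement, by contrast, is a formal consequence of Deligne's uniqueness theorem once one knows that $I_{\text{B}}$ intertwines $\nabla^{\text{an}}_{\mathbb{C}}$ with $\nabla$.
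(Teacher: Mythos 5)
The paper offers no argument of its own here --- its ``proof'' is a pointer to \cite{SteenbrinkLMHS,Hain,SteenbrinkVMHS,DeligneMirror,DeligneEquations} --- and your proposal correctly reconstructs the standard argument contained in those references: the unfiltered isomorphism follows from Deligne's uniqueness of the logarithmic extension with nilpotent residue (both sides satisfy the characterizing property, the Steenbrink side by Section~\ref{sectionDCE} and the Schmid side because the connection matrix in the frame $\{\widehat{\sigma}^a\}$ is $-\tfrac{1}{2\pi i}N\,d\varphi/\varphi$), and the filtered isomorphism then reduces to comparing the two extensions of $\mathcal{F}^p$ inside the now-identified ambient bundle. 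The fiber statements are indeed just $\otimes_{\mathcal{O}_{\Delta,0}}\mathbb{C}$.

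One step needs tightening. You assert that $\widetilde{\mathscr{F}}^{p,\text{an}}_{\mathbb{C}}$ and $\widetilde{\mathcal{F}}^p$ are ``the unique locally free coherent subsheaves'' of the ambient extension agreeing with $\mathcal{F}^p$ on $\Delta^*$. Local freeness alone does not single out an extension: $\varphi\,\mathcal{O}_{\Delta}\subset\mathcal{O}_{\Delta}$ and $\mathcal{O}_{\Delta}\subset\mathcal{O}_{\Delta}$ are both locally free and agree on $\Delta^*$, yet differ at $0$. What you actually need is that each extension is a \emph{subbundle} (equivalently, saturated, i.e.\ with locally free quotient); two subbundles of a vector bundle over $\Delta$ that coincide on $\Delta^*$ coincide everywhere, since a subbundle is a holomorphic section of a Grassmannian bundle. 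Fortunately the inputs you cite deliver exactly this: Schmid's nilpotent orbit theorem says the Hodge filtration extends as a filtration by subbundles of $\widetilde{\mathcal{V}}$, and Steenbrink's degeneration of the logarithmic Hodge--de Rham spectral sequence makes the successive quotients $\widetilde{\mathscr{F}}^{p}/\widetilde{\mathscr{F}}^{p+1}$ locally free. With ``locally free subsheaf'' replaced by ``subbundle'' throughout, your argument is complete and is the one the cited literature carries out.
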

\begin{proof}
See \cite{SteenbrinkLMHS,Hain, SteenbrinkVMHS,DeligneMirror,DeligneEquations}.
\end{proof}

In Deligne's canonical extension, the section $\widehat{\xi}(\varphi)$ of $\mathcal{V}$ extends to a section of $\widetilde{\mathcal{V}}$ that will be denoted by $\widetilde{\xi}(\varphi)$, in particular the frame $\{\widehat{\sigma}^a(\varphi) \}$ of $\mathcal{V}$ extends to a frame of $\widetilde{\mathcal{V}}$ that will be denoted by $\{\widetilde{\sigma}^a(\varphi) \}$. Therefore $\{\widetilde{\sigma}^a(0)\}$ form a basis of the fiber $\widetilde{\mathcal{V}}\vert_0$, and we have an isomorphism defined by
\begin{equation} \label{eq:Newrho}
\rho_{\varphi_0}: H^q(\mathcal{X}_{\varphi_0},\mathbb{C}) \rightarrow \widetilde{\mathcal{V}}\vert_0, ~~\rho_{\varphi_0}(\xi)=\widetilde{\xi}(0),
\end{equation}  
through which the lattice $H^q(\mathcal{X}_{\varphi_0},\mathbb{Z})$ defines a lattice structure on $\widetilde{\mathcal{V}}\vert_0$  \cite{Schmid,Hain,DeligneMirror}
\begin{equation}
\widetilde{\mathcal{V}}\vert_{0,\mathbb{Z}}:=\rho_{\varphi_0}\,(H^q(\mathcal{X}_{\varphi_0},\mathbb{Z})).
\end{equation}
Similarly $H^q(\mathcal{X}_{\varphi_0},\mathbb{Q})$ defines a rational structure on $\widetilde{\mathcal{V}}\vert_{0}$ which satisfies \cite{Schmid, Hain, PetersSteenbrink}
\begin{equation}
\widetilde{\mathcal{V}}\vert_{0,\mathbb{Q}}=\widetilde{\mathcal{V}}\vert_{0,\mathbb{Z}} \otimes_{\mathbb{Z}} \mathbb{Q}.
\end{equation}
The action of $T$ in  \ref{eq:MonoPsifirst} is unipotent, hence it defines a weight filtration $W_*$ on $\widetilde{\mathcal{V}}\vert_{0,\mathbb{Q}}$, while the fiber $\widetilde{\mathcal{F}}^p \vert_0$ defines a Hodge filtration $F^*$ on $\widetilde{\mathcal{V}} \vert_0$. We have the following important theorem of Schmid.

\begin{theorem}
The following data forms a MHS
\begin{equation} \label{eq:NewLMHS}
\left(\widetilde{\mathcal{V}}\vert_{0,\mathbb{Z}},(\widetilde{\mathcal{V}}\vert_{0,\mathbb{Q}},W_*),(\widetilde{\mathcal{V}}\vert_{0},W_*,F^*)\right).
\end{equation}
\end{theorem}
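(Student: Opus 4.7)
The plan is to reduce the statement to Steenbrink's theorem of Section \ref{SectionlimitMHS}, by identifying the data \ref{eq:NewLMHS} with the MHS \ref{eq:OriginalLMHS} under the comparison isomorphism $I_B$ of \ref{eq:VtildeIsoToAna}. Both structures live, after complexification, on the same vector space: by Proposition \ref{FiberComplexIso} and \ref{eq:SingularfiberIso} we have
$$\widetilde{\mathscr{V}}^{\text{an}}_{\mathbb{C}}\vert_0 \simeq \mathbb{H}^q(\mathcal{X}_0,\text{R}\Psi_{\pi_{\Delta}}(\mathbb{C})),$$
while the second isomorphism in \ref{eq:VtildeIsoToAna} sends $\widetilde{\mathscr{V}}^{\text{an}}_{\mathbb{C}}\vert_0$ to $\widetilde{\mathcal{V}}\vert_0$. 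Composing these, I would regard both \ref{eq:OriginalLMHS} and \ref{eq:NewLMHS} as structures on the single complex vector space $\widetilde{\mathcal{V}}\vert_0$, and the claim then becomes that they agree.

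The verification splits into three steps, each using work already assembled earlier in the paper. First, for the Hodge filtration: the last identity of Section \ref{SectionlimitMHS} says that Steenbrink's $F^p$ on $\widetilde{\mathscr{V}}^{\text{an}}_{\mathbb{C}}\vert_0$ is exactly $\widetilde{\mathscr{F}}^{p,\text{an}}_{\mathbb{C}}\vert_0$, and by \ref{eq:VtildeIsoToAna} $I_B$ carries this to $\widetilde{\mathcal{F}}^p\vert_0$, which is Schmid's $F^p$. Second, for the weight filtration: Steenbrink's $W_*$ is the monodromy weight filtration of the endomorphism $N_0$ on $\widetilde{\mathscr{V}}^{\text{an}}_{\mathbb{C}}\vert_0$ (centered at $q$), obtained from \ref{eq:Nmorphism1}, whereas Schmid's $W_*$ is the monodromy weight filtration of $N=\log\Psi(T)$ on $\widetilde{\mathcal{V}}\vert_{0,\mathbb{Q}}$. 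The relation \ref{eq:T0N0Exp} between $T_0$ and $N_0$ combined with the definition \ref{eq:Ndefn} shows that under $I_B$ these two nilpotent operators correspond up to the scalar $-2\pi i$, and the monodromy weight filtration is insensitive to nonzero scalar rescalings of $N$; by Jacobson–Morozov uniqueness the two filtrations therefore coincide. Third, for the rational/integral structure: Lemma \ref{SGAlemma} identifies $\mathbb{H}^q(\mathcal{X}_0,\text{R}\Psi_{\pi_{\Delta}}\mathbb{Q})$ with $H^q(\mathcal{X}_{\varphi_0},\mathbb{Q})$, while Deligne's map $\rho_{\varphi_0}$ of \ref{eq:Newrho} identifies $H^q(\mathcal{X}_{\varphi_0},\mathbb{Q})$ with $\widetilde{\mathcal{V}}\vert_{0,\mathbb{Q}}$; both identifications are constructed from the universal cover and branch of $\log\varphi$ fixed in Remark \ref{FixedParameter}, and a direct comparison via the twisting factor $\exp(-(\log\varphi/2\pi i)N)$ shows they are compatible.

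Once these three compatibilities are established, the fact that \ref{eq:NewLMHS} is a MHS is immediate from the corresponding fact for \ref{eq:OriginalLMHS}, which is Steenbrink's theorem already cited. In particular the nontrivial part, namely that each $\mathrm{Gr}^W_k$ carries a pure Hodge structure of weight $k$, is inherited from the nearby-cycle side.

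\textbf{Main obstacle.} The step that requires the most care is the third: matching the two rational structures. On Schmid's side the lattice comes from parallel-transporting cohomology classes via $\exp(-(\log\varphi/2\pi i)N)\xi(\varphi)$ and specialising at $\varphi=0$, and on Steenbrink's side it comes from the nearby cycle complex $\text{R}\Psi_{\pi_{\Delta}}\mathbb{Q}$ on $\mathcal{X}_0$; both depend on the same choices pinned down in Remark \ref{FixedParameter}, but matching them requires unwinding the construction of the quasi-isomorphism in Proposition \ref{FiberComplexIso} and keeping careful track of the factors of $2\pi i$ introduced by the residue conventions in \ref{eq:T0N0Exp}. With that bookkeeping done, the theorem reduces cleanly to Steenbrink's result, and no further analytic input (nilpotent orbit theorem, $SL_2$-orbit theorem) is needed for this geometric situation.
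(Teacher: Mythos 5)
Your proposal is correct in outline, but it takes a genuinely different route from the paper. The paper's proof of this theorem is simply the citation \emph{``See \cite{Schmid, PetersSteenbrink}''}; Schmid's original argument is purely Hodge-theoretic (nilpotent orbit and $SL_2$-orbit theorems), works for an arbitrary polarized variation of Hodge structure, and does not require a geometric degeneration. You instead reduce the theorem, in the geometric (semistable, NCD) setting of this paper, to Steenbrink's construction via the comparison isomorphism $I_B$. That reduction is legitimate and in fact more elementary in this situation, avoiding the analytic $SL_2$-orbit machinery, but it trades generality for convenience: your argument only proves the theorem when the variation comes from a semistable degeneration, which is all that is needed here.

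Two points of bookkeeping. First, the pivot of your argument — matching the rational (and integral) structures in step~3 — is precisely the content of Proposition~\ref{SteenbrinkSchmid}, which the paper states immediately after this theorem and also leaves to the literature; you have correctly identified it as the load-bearing step and correctly flagged that it requires unwinding the quasi-isomorphism of Proposition~\ref{FiberComplexIso}, but you have not actually carried it out, so your proof has the same dependence on \cite{SteenbrinkLMHS, SteenbrinkVMHS, PetersSteenbrink} that the paper's one-line citation has. Second, a small sign: from $T_0=\exp(-2\pi i\,N_0)$ in \ref{eq:T0N0Exp} and $N=\log\Psi(T)$ in \ref{eq:Ndefn}, one gets $N=-2\pi i\,N_0$ under $I_B$, so the two nilpotents differ by the scalar $-2\pi i$; as you say, the monodromy weight filtration is characterized by a convolution property that is invariant under nonzero scaling of $N$, so steps~1 and~2 are sound as written. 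In short: a valid alternative proof strategy that correctly isolates where the real work lives, but the real work is deferred to the same references the paper cites.
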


\begin{proof}
See \cite{Schmid, PetersSteenbrink}.
\end{proof}
Moreover, Schmid's construction of limit MHS is compatible with Steenbrink's more algebraic construction of limit MHS.
\begin{proposition} \label{SteenbrinkSchmid}
The underlying rational MHS of \ref{eq:NewLMHS}
\begin{equation} \label{eq:NewRationalLMHS}
\left(\widetilde{\mathcal{V}}\vert_{0,\mathbb{Q}},(\widetilde{\mathcal{V}}\vert_{0,\mathbb{Q}},W_*),(\widetilde{\mathcal{V}}\vert_{0},W_*,F^*) \right)
\end{equation}
is isomorphic to the limit MHS $\underline{H}^q(\mathbf{Y}^{\bullet})$ in \ref{eq:RationalLMHS}.
\end{proposition}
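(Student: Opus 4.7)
The plan is to exhibit an explicit isomorphism of the underlying rational vector spaces and then verify compatibility of the weight and Hodge filtrations separately, using the chain of identifications already assembled in the preceding subsections.

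First I would construct the candidate isomorphism on complexifications. Composing the quasi-isomorphism of Proposition \ref{FiberComplexIso}, which yields \ref{eq:SingularfiberIso}
\[
\mathbb{H}^q(\mathcal{X}_0,\text{R}\Psi_{\pi_{\Delta}}\mathbb{C})\simeq \widetilde{\mathscr{V}}^{\text{an}}_{\mathbb{C}}\vert_0,
\]
with the comparison isomorphism $I_{\text{B}}$ of \ref{eq:VtildeIsoToAna} gives a canonical $\mathbb{C}$-linear isomorphism
\[
\Phi:\mathbb{H}^q(\mathcal{X}_0,\text{R}\Psi_{\pi_{\Delta}}\mathbb{C})\xrightarrow{\sim}\widetilde{\mathcal{V}}\vert_0.
\]
Since the second clause of \ref{eq:VtildeIsoToAna} sends $\widetilde{\mathscr{F}}^{p,\text{an}}_{\mathbb{C}}\vert_0$ to $\widetilde{\mathcal{F}}^p\vert_0$, and the Hodge filtration on the Steenbrink side is defined as $F^p=\widetilde{\mathscr{F}}^{p,\text{an}}_{\mathbb{C}}\vert_0$ transported along \ref{eq:SingularfiberIso}, the isomorphism $\Phi$ automatically intertwines the two Hodge filtrations.

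Second, I would check that $\Phi$ descends to an isomorphism over $\mathbb{Q}$. By Lemma \ref{SGAlemma}, the rational structure on the Steenbrink side corresponds to $H^q(\mathcal{X}_{\varphi_0},\mathbb{Q})$ via the specialisation map in the nearby cycle formalism. On the Schmid side, the rational structure $\widetilde{\mathcal{V}}\vert_{0,\mathbb{Q}}$ is defined as $\rho_{\varphi_0}(H^q(\mathcal{X}_{\varphi_0},\mathbb{Q}))$, where $\rho_{\varphi_0}$ is the map in \ref{eq:Newrho} given by extending a class $\xi$ to the flat multi-valued section $\xi(\varphi)$, twisting by $\exp(-\tfrac{\log\varphi}{2\pi i}N)$ to obtain a single-valued section $\widehat{\xi}$ of $\mathcal{V}$, and evaluating its Deligne-canonical extension at $0$. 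The standard comparison between the nearby fibre and Deligne's canonical extension (see the references cited for Proposition \ref{FiberComplexIso}) identifies these two procedures, so $\Phi$ carries $\mathbb{H}^q(\mathcal{X}_0,\text{R}\Psi_{\pi_{\Delta}}\mathbb{Q})$ to $\widetilde{\mathcal{V}}\vert_{0,\mathbb{Q}}$.

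Third, I would verify compatibility of the weight filtrations, which are both instances of the monodromy weight filtration attached to a nilpotent endomorphism. Steenbrink's $W_\bullet$ on $\mathbb{H}^q(\mathcal{X}_0,\text{R}\Psi_{\pi_{\Delta}}\mathbb{Q})$ is determined uniquely by the requirement that the morphism $N_0$ of \ref{eq:Nmorphism1} shift it by $-2$; Schmid's $W_\bullet$ on $\widetilde{\mathcal{V}}\vert_{0,\mathbb{Q}}$ is the analogous monodromy filtration for $N=\log\Psi(T)$ defined in \ref{eq:Ndefn}. The formula \ref{eq:T0N0Exp} together with the definition of $N$ shows that $\Phi$ carries $N$ to $-2\pi i\,N_0$, i.e.\ the two nilpotent operators differ by a nonzero scalar. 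Since the monodromy weight filtration of a nilpotent endomorphism is unchanged by rescaling, the two weight filtrations coincide under $\Phi$.

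The main obstacle is the verification in the second and third steps that the abstract identification $\Phi$ really does match the concrete constructions: both the rational lattice produced by $\rho_{\varphi_0}$ and the monodromy operator $N$ must be shown to correspond to the nearby cycle rational structure and the Steenbrink residue $N_0$ under the quasi-isomorphism of Proposition \ref{FiberComplexIso}. This is where one must unwind the construction of the quasi-isomorphism explicitly, as done in Chapter 11 of \cite{PetersSteenbrink}, and this compatibility is exactly the content one invokes rather than reproves in the cited reference.
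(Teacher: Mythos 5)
Your proposal is correct in outline, but it is worth being clear about what the paper actually does here: its ``proof'' of Proposition \ref{SteenbrinkSchmid} is a bare citation to Steenbrink, Steenbrink--Zucker and Peters--Steenbrink, with no argument given. What you have written is essentially the argument contained in those references, and you have correctly isolated the two genuine compatibilities that constitute its content: (i) that the rational structure $\rho_{\varphi_0}(H^q(\mathcal{X}_{\varphi_0},\mathbb{Q}))$ produced by Deligne's canonical extension matches the rational structure $\mathbb{H}^q(\mathcal{X}_0,\text{R}\Psi_{\pi_\Delta}\mathbb{Q})$ under the quasi-isomorphism of Proposition \ref{FiberComplexIso}, and (ii) that Schmid's $N=\log\Psi(T)$ corresponds, up to the factor $-2\pi i$ accounted for by \ref{eq:T0N0Exp} and the Tate twist in \ref{eq:Nmorphism1}, to Steenbrink's residue $N_0$. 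You are also right that these are exactly the points one must unwind in Chapter 11 of Peters--Steenbrink rather than reprove.

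Two small cautions. First, your characterisation of the weight filtration is slightly incomplete: the condition that $N$ shift $W_\bullet$ by $-2$ does not determine it uniquely; one also needs the filtration to be centred at the cohomological degree $q$ and to satisfy $N^k:\mathrm{Gr}^W_{q+k}\xrightarrow{\sim}\mathrm{Gr}^W_{q-k}$. With that corrected, your rescaling argument goes through, since the monodromy weight filtration depends only on the subspaces $\ker N^k$ and $\operatorname{im} N^k$, which are insensitive to multiplying $N$ by a nonzero scalar. Second, the identification of the rational structures in your second step is not a formal consequence of Lemma \ref{SGAlemma} (which is only a statement about the underlying $\Lambda$-modules); it requires knowing that the specialisation/nearby-cycle identification of $\mathbb{H}^q(\mathcal{X}_0,\text{R}\Psi_{\pi_\Delta}\mathbb{Q})$ with $H^q(\mathcal{X}_{\varphi_0},\mathbb{Q})$ is compatible, after complexification, with the map $\rho_{\varphi_0}$ of \ref{eq:Newrho} and the comparison \ref{eq:VtildeIsoToAna}. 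You flag this honestly as the main obstacle, and it is indeed the nontrivial input supplied by the cited references rather than by anything in this paper.
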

\begin{proof}
See \cite{SteenbrinkLMHS, SteenbrinkVMHS, PetersSteenbrink}.
\end{proof}

\section{Limit mixed motives} \label{LimitMixedMotive}

This section is devoted to the construction of the limit mixed motive by Ayoub's motivic nearby cycle functor, but first we need to briefly discuss the construction of the category of \'etale motivic sheaves, the details of which are left to the papers \cite{AyoubT1, AyoubMS}. 

\subsection{A naive construction of the \'etale motivic sheaves}

Suppose $\Lambda$ is a commutative ring and in this paper we are mostly concerned with the case where $\Lambda$ is $\mathbb{Q}$. In order to satisfy some technical assumptions, all schemes in this section are assumed to be separated, Noetherian and of finite Krull dimension. Given a base scheme $U$, the category of \'etale motivic sheaves with coefficients ring $\Lambda$ will be denoted by $\textbf{DA}^{\text{\'et}}(U,\Lambda)$. In this section, we will follow \cite{AyoubMS} and give an `incorrect' naive construction of the category $\textbf{DA}^{\text{\'et,naive}}(U,\Lambda)$, which nonetheless catches some essences of $\textbf{DA}^{\text{\'et}}(U,\Lambda)$ and suffices for this paper. The rigorous construction of $\textbf{DA}^{\text{\'et}}(U,\Lambda)$ is left to \cite{AyoubMS, AyoubT1}.

Let $\textbf{Sm}/U$ be the category of smooth $U$-schemes  endowed with \'etale topology and let $\textbf{Sh}_{\text{\'et}}(\textbf{Sm}/U;\Lambda)$ be the abelian category of \'etale sheaves on $\textbf{Sm}/U$ that take values in the category of $\Lambda$-modules \cite{MilneEC}. A smooth $U$-scheme $Z$ defines a presheaf through 
\begin{equation}
V\in \textbf{Sm}/U \mapsto \Lambda \otimes \text{Hom}_U(V,Z),
\end{equation}
where $\Lambda \otimes \text{Hom}_U(V,Z)$ is the $\Lambda$-module generated by the set $\text{Hom}_U(V,Z)$. The sheaf associated to this presheaf defined by $Z$ will be denoted by $\Lambda_{\text{\'et}}(Z)$. In this way we have found a Yoneda functor
\begin{equation}
\Lambda_{\text{\'et}}: \textbf{Sm}/U \rightarrow \textbf{Sh}_{\text{\'et}}(\textbf{Sm}/U;\Lambda),
\end{equation} 
which can be considered as the first-step linearisation of the category $\textbf{Sm}/S$.
It is easy to see that 
the \'etale sheaf $\Lambda_{\text{\'et}}(U)$ associated to the identity morphism of $U$ is the constant \'etale sheaf  $\underline{\underline{\Lambda}}$ on $\textbf{Sm}/U$.

The next step in the construction is to take $\mathbb{A}^1$-localisation. Let $D(\textbf{Sh}_{\text{\'et}}(\textbf{Sm}/U;\Lambda))$ be the derived category of $\textbf{Sh}_{\text{\'et}}(\textbf{Sm}/U;\Lambda)$, and let $\mathcal{T}_{\mathbb{A}^1}$ be the smallest full triangulated subcategory of $D(\textbf{Sh}_{\text{\'et}}(\textbf{Sm}/U;\Lambda))$ that is closed under arbitrary direct sums and also contains all the complexes of the form
\begin{equation}
\begin{tikzcd}
\cdots \arrow[r] & 0 \arrow[r] & \Lambda_{\text{\'et}}(\mathbb{A}_U^1 \times_U V) \arrow[r] &  \Lambda_{\text{\'et}}(V) \arrow[r] & 0 \arrow[r] & \cdots
\end{tikzcd}
\end{equation}
Here $V$ is a smooth $U$-scheme and the morphism from $\Lambda_{\text{\'et}}(\mathbb{A}_U^1 \times_U V)$ to $\Lambda_{\text{\'et}}(V)$ is induced by the natural projection $\mathbb{A}_U^1 \times_U V \rightarrow V$. Define the category $\textbf{DA}^{\text{\'et,eff}}(U,\Lambda)$ as the following Verdier quotient \cite{AyoubMS}
\begin{equation}
\textbf{DA}^{\text{\'et,eff}}(U,\Lambda):=D(\textbf{Sh}_{\text{\'et}}(\textbf{Sm}/U;\Lambda))/\mathcal{T}_{\mathbb{A}^1}.
\end{equation}  
Its objects are the same as that of $D(\textbf{Sh}_{\text{\'et}}(\textbf{Sm}/U;\Lambda))$, hence by abuse of notations, the objects of $\textbf{DA}^{\text{\'et,eff}}(U,\Lambda)$ will be denoted by the same symbols. As the name has implied, objects of $\textbf{DA}^{\text{\'et,eff}}(U,\Lambda)$ will be called the effective $U$-motives. The effect of the Verdier quotient is to invert the morphisms of $D(\textbf{Sh}_{\text{\'et}}(\textbf{Sm}/U;\Lambda))$ whose cones lie in $\mathcal{T}_{\mathbb{A}^1}$. For example, the cone of the morphism 
\begin{equation}
\Lambda_{\text{\'et}}(\mathbb{A}_U^1 \times_U V) \rightarrow \Lambda_{\text{\'et}}(V)
\end{equation}
is in $\mathcal{T}_{\mathbb{A}^1}$, hence it becomes an isomorphism in $\textbf{DA}^{\text{\'et,eff}}(U,\Lambda)$, so  $\Lambda_{\text{\'et}}(\mathbb{A}_U^1 \times_U V)$ is isomorphic to $\Lambda_{\text{\'et}}(V)$ in $\textbf{DA}^{\text{\'et,eff}}(U,\Lambda)$. 
\begin{definition}

Suppose the scheme $Z \in \textbf{Sm}/U$ is of finite presentation \cite{Hartshorne}. The category $\textbf{DA}_{\text{ct}}^{\text{\'et,eff}}(U,\Lambda)$ is by definition the smallest full triangulated subcategory of $\textbf{DA}^{\text{\'et,eff}}(U,\Lambda)$ that contains all the objects $\Lambda_{\text{\'et}}(Z)$ and is also closed under taking direct summand. Objects of $\textbf{DA}_{\text{ct}}^{\text{\'et,eff}}(U,\Lambda)$ will be called the constructible $U$-motives.
\end{definition}
The last step in the construction is stabilisation, and we will only discuss the naive stabilisation in this paper while leave the rigorous one to \cite{AyoubMS}. The injection $\infty_U \hookrightarrow \mathbb{P}^1_U$ of $U$-schemes defines a morphism
\begin{equation}
\Lambda_{\text{\'et}}(\infty_U) \rightarrow \Lambda_{\text{\'et}}(\mathbb{P}^1_U),
\end{equation}
whose cokernel in $\textbf{Sh}_{\text{\'et}}(\textbf{Sm}/U;\Lambda)$ will be denoted by $\Lambda_{\text{\'et}}(\mathbb{P}^1_U,\infty_U)$. The object $\Lambda_{\text{\'et}}(\mathbb{P}^1_U,\infty_U)$ defines a motive in $\textbf{DA}^{\text{\'et,eff}}(U,\Lambda)$ that will be called the Lefschetz motive
\begin{equation}
L:=\Lambda_{\text{\'et}}(\mathbb{P}^1_U,\infty_U).
\end{equation}
The naive stabilisation is to invert the Lefschetz motive $L$
\begin{equation}
\textbf{DA}^{\text{\'et,naive}}(U,\Lambda):=\textbf{DA}^{\text{\'et,eff}}(U,\Lambda)[L^{-1}].
\end{equation}
More precisely, objects of $\textbf{DA}^{\text{\'et,naive}}(U,\Lambda)$ are formal pairs $(M,m)$ where $M$ is an object of $\textbf{DA}^{\text{\'et,eff}}(U,\Lambda)$ and $m$ is an integer. Morphisms between two objects $(M,m)$ and $(N,n)$ are given by
\begin{equation}
 \varinjlim_{r \geq -\min(m,n)} \text{Hom}_{\textbf{DA}^{\text{\'et,eff}}(U,\Lambda)}(M\otimes L^{r+m},N \otimes L^{r+n}).
\end{equation}
This naive stabilisation has the merit of being very straightforward, but it suffers many technical problems, e.g. $\textbf{DA}^{\text{\'et,naive}}(U,\Lambda)$ is not even a triangulated category. However it still catches some essences of the category of \'etale motivic sheaves $\textbf{DA}^{\text{\'et}}(U,\Lambda)$. More precisely, let $\textbf{DA}_{\text{ct}}^{\text{\'et,naive}}(U,\Lambda)$ be the full subcategory of $\textbf{DA}^{\text{\'et,naive}}(U,\Lambda)$ generated by objects $(M,m)$ where $M$ is an object of $\textbf{DA}_{\text{ct}}^{\text{\'et,eff}}(U,\Lambda)$.  Let $\textbf{DA}_{\text{ct}}^{\text{\'et}}(U,\Lambda)$ be the full triangulated subcategory of $\textbf{DA}^{\text{\'et}}(U,\Lambda)$ generated by constructible objects, which is certainly the most important subcategory of $\textbf{DA}^{\text{\'et}}(U,\Lambda)$. Under some technical assumptions, which are all satisfied when $U$ is a quasiprojective variety over a field of characteristic 0 and the coefficient ring $\Lambda$ is $\mathbb{Q}$, the category $\textbf{DA}_{\text{ct}}^{\text{\'et,naive}}(U,\Lambda)$ is equivalent to $\textbf{DA}_{\text{ct}}^{\text{\'et}}(U,\Lambda)$ \cite{AyoubMS}. Since in this paper we will only be concerned with the constructible objects of $\textbf{DA}^{\text{\'et}}(U,\Lambda)$, the naive stabilisation will suffice for our purpose. The defect-free construction of $\textbf{DA}^{\text{\'et}}(U,\Lambda)$ is left to the paper \cite{AyoubMS}.

\begin{remark}
In the construction of $\textbf{DA}^{\text{\'et}}(U,\Lambda)$, there exists a covariant functor from $\textbf{Sm}/U$ to $\textbf{DA}^{\text{\'et}}(U,\Lambda)$, so its objects are homological mixed motives. While in Section \ref{VoevodskyMM}, the functor which sends a variety to a mixed motive is contravariant, so the motives there are cohomological. The difference is a dual operation \cite{LevineMM}. 
\end{remark}

The category $\textbf{DA}^{\text{\'et}}(U,\Lambda)$ satisfies Grothendieck's six operations formalism \cite{AyoubMS,AyoubT1}.  But in this paper we will only need one such operation, i.e. given a morphism $g:U \rightarrow V$, there exists a pushforward functor $g_*$
\begin{equation}
g_*: \textbf{Sh}_{\text{\'et}}(\textbf{Sm}/U;\Lambda) \rightarrow \textbf{Sh}_{\text{\'et}}(\textbf{Sm}/V;\Lambda),
\end{equation} 
such that for an \'etale sheaf $\mathcal{G}$ of $\textbf{Sh}_{\text{\'et}}(\textbf{Sm}/U;\Lambda)$ we have
\begin{equation}
g_*\,\mathcal{G}(W):=\mathcal{G}(W\times_V U),\,W \in \textbf{Sm}/V.
\end{equation}
The functor $g_*$ can be derived and it defines a functor $\text{R}g_*$ \cite{AyoubMS}
\begin{equation}
\text{R}g_*:\textbf{DA}^{\text{\'et,eff}}(U,\Lambda)  \rightarrow \textbf{DA}^{\text{\'et,eff}}(V,\Lambda).
\end{equation}
Moreover, the functor $g_*$ can be extended to the $L$-spectra, which can be derived and yields a functor $\text{R}g_*$ \cite{AyoubMS,AyoubT1}
\begin{equation}
\text{R}g_*:\textbf{DA}^{\text{\'et}}(U,\Lambda)  \rightarrow \textbf{DA}^{\text{\'et}}(V,\Lambda).
\end{equation}
What is important to us is that, the functor $\text{R}g_*$ sends constructible objects of $\textbf{DA}^{\text{\'et}}(U,\Lambda)$ to the constructible objects of $\textbf{DA}^{\text{\'et}}(V,\Lambda)$ \cite{AyoubT1}.

\subsection{Motivic nearby cycle functor}

The category $\textbf{DA}^{\text{\'et}}(U,\Lambda)$ also satisfies the nearby cycle formalism, whose realisation is the classical nearby cycle functor \cite{AyoubT1,AyoubNCF}. Since $0$ is a smooth point of $S$, the local ring $\mathscr{O}_{S,0}$ is a discrete valuation ring, and the affine scheme $\text{Spec}\,\mathscr{O}_{S,0}$ admits an injection into $S$ \cite{Hartshorne}
\begin{equation} \label{eq:localinjectionat0}
\text{Spec}\,\mathscr{O}_{S,0} \rightarrow S.
\end{equation}
Let $ \mathscr{O}^{\text{h}}_{S,0}$ be the henselisation of $\mathscr{O}_{S,0}$, then there is an injective local ring homomorphism from $\mathscr{O}_{S,0}$ to $ \mathscr{O}^{\text{h}}_{S,0}$ that induces a morphism \cite{MilneEC}
\begin{equation} \label{eq:henselizationInjection}
\text{Spec}\, \mathscr{O}^{\text{h}}_{S,0} \rightarrow \text{Spec}\,\mathscr{O}_{S,0}.
\end{equation}
The affine scheme $\text{Spec}\, \mathscr{O}^{\text{h}}_{S,0}$ consists of two points: a generic point $\eta$ and a closed point $s$ with residue field $\mathbb{Q}$. For simplicity, let us denote $\text{Spec}\, \mathscr{O}^{\text{h}}_{S,0}$ by $B$, and we obtain a henselian trait $(B,s,\eta)$
\begin{equation} \label{eq:henseliantrait}
\begin{tikzcd}
\eta \arrow[r] & B  & \arrow[l] s 
\end{tikzcd}
\end{equation}
The composition of \ref{eq:localinjectionat0} and \ref{eq:henselizationInjection} defines a morphism $i: B \rightarrow S$. Let $f:X_B \rightarrow B$ be the pull-back of $\pi_{\mathbb{Q}}$ \ref{eq:FibrationSRational} along $i$
\begin{equation}
\begin{tikzcd}
X_B \arrow[r] \arrow[d,"f"] & X \arrow[d,"\pi_{\mathbb{Q}}"]   \\
B \arrow[r,"i"]& S 
\end{tikzcd},
\end{equation}
and moreover the pull-backs of $f$ along $\eta \rightarrow B$ and $s \rightarrow B$ form a commutative diagram
\begin{equation} \label{eq:henseliantraitfibration}
\begin{tikzcd}
X_{\eta} \arrow[d,"f_{\eta}"] \arrow[r] & X_B \arrow[d,"f"] & \arrow[l] X_{s} \arrow[d,"f_s"]  \\
\eta \arrow[r] & B & \arrow[l] s 
\end{tikzcd}
\end{equation}
Notice that the fiber $ X_s$ is just $Y$. From \cite{AyoubT1,AyoubNCF}, there exists a motivic nearby cycle functor $\text{R}\Psi_f$
\begin{equation} \label{eq:MotivicNCF}
\text{R}\Psi_f:\textbf{DA}^{\text{\'et}}( X_{\eta},\mathbb{Q}) \rightarrow \textbf{DA}^{\text{\'et}}( X_s,\mathbb{Q}).
\end{equation}
and from Theorem 10.9 of \cite{AyoubNCF}, the functor $\text{R}\Psi_f$ sends constructible objects of $\textbf{DA}^{\text{\'et}}( X_{\eta},\mathbb{Q})$ to the constructible objects of $\textbf{DA}^{\text{\'et}}( X_s,\mathbb{Q})$. As noted earlier, the identity morphism of $X_{\eta}$ defines the constant \'etale sheaf $\mathbb{Q}_{\text{\'et}}(X_{\eta})$ on $\textbf{Sm}/X_{\eta}$, which produces a constructible motive of $\textbf{DA}^{\text{\'et}}(X_{\eta},\Lambda)$ that is also denoted by $\mathbb{Q}_{\text{\'et}}(X_{\eta})$. Thus $\text{R}\Psi_f(\mathbb{Q}_{\text{\'et}}(X_{\eta}))$ is a constructible motive of $\textbf{DA}^{\text{\'et}}( X_s,\mathbb{Q})$, which is called the  nearby motivic sheaf, whose realisations are the classical nearby cycle sheaves by Th\'eor\`eme 4.9 of \cite{AyoubBetti} in the Betti realisation case and by Th\'eor\`eme 10.11 of \cite{AyoubNCF} in the $\ell$-adic realisation case. The structure morphism $f_s$ in \ref{eq:henseliantraitfibration} yields a functor
\begin{equation}
\text{R}(f_{s})_*:\textbf{DA}^{\text{\'et}}( X_s,\mathbb{Q}) \rightarrow \textbf{DA}^{\text{\'et}}(\mathbb{Q}, \mathbb{Q}),
\end{equation} 
which sends the constructible objects of $\textbf{DA}^{\text{\'et}}( X_s,\mathbb{Q})$ to constructible objects of $\textbf{DA}^{\text{\'et}}(\mathbb{Q}, \mathbb{Q})$ \cite{AyoubT1}. The limit mixed motive $\mathcal{Z}$ is defined as
\begin{equation} \label{eq:limitmotive}
\mathcal{Z}:=\text{R}(f_{s})_* \circ \text{R}\Psi_f(\mathbb{Q}_{\text{\'et}}(X_{\eta})),
\end{equation}
which is a constructible object of $\textbf{DA}^{\text{\'et}}(\mathbb{Q}, \mathbb{Q})$. While from the Theorem 4.4 of \cite{AyoubMS}, the category $\textbf{DA}^{\text{\'et}}(\mathbb{Q}, \mathbb{Q})$ is equivalent to the category $\textbf{DM}^{\text{\'et}}(\mathbb{Q}, \mathbb{Q})$, which is discussed in Section 4.3 of \cite{AyoubMS}. From Theorem 14.30 of \cite{VoevodskyMM}, the dual of $\textbf{DM}^{\text{\'et}}(\mathbb{Q}, \mathbb{Q})$ is equivalent to the category $\textbf{DM}(\mathbb{Q}, \mathbb{Q})$ introduced in Section \ref{VoevodskyMM}, therefore the limit mixed motive $\mathcal{Z}$ defines a constructible object of $\textbf{DM}(\mathbb{Q}, \mathbb{Q})$, i.e. an object of $\textbf{DM}_{\text{gm}}(\mathbb{Q}, \mathbb{Q})$, that will be denoted by $\mathcal{Z}^{\vee}$. Since the realisations of the motive $\text{R}\Psi_f(\mathbb{Q}_{\text{\'et}}(X_{\eta}))$ are the classical nearby cycle sheaves by Th\'eor\`eme 4.9 of \cite{AyoubBetti} and Th\'eor\`eme 10.11 of \cite{AyoubNCF}, therefore the realisations of $\mathcal{Z}^{\vee}$ are complexes that compute the cohomologies of the classical nearby cycle sheaves (in Betti case or $\ell$-adic case). In fact, Ayoub has conjectured that more is true.

\begin{conjecture} \label{limitmotiveconjecture}
The Hodge realisation of the limit mixed motive $\mathcal{Z}^{\vee}$ is isomorphic to the complex $\mathbf{Z}^{\bullet}$ in \ref{eq:LimitHCQQMHC}, i.e. the Hodge realisation of the limit mixed motive $\mathcal{Z}^{\vee}$ is the complex in the derived category $D^b(\textbf{MHS}_{\mathbb{Q}})$ that computes the limit MHS in formula \ref{eq:RationalLMHS}.
\end{conjecture}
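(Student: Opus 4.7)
The plan is to construct a Hodge-theoretic analogue $\Psi^{H}_{\varphi}$ of Ayoub's motivic nearby cycle functor $\mathrm{R}\Psi_f$, establish a natural isomorphism $\Re \circ \mathrm{R}\Psi_f \simeq \Psi^{H}_{\varphi} \circ \Re$, and then identify $\mathrm{R}(f_{s,*})\Psi^{H}_{\varphi}(\mathbb{Q}^{H}_{X_{\eta}})$ with Steenbrink's complex $\mathbf{Z}^{\bullet}$ of \ref{eq:LimitHCQQMHC}. Concretely, I would proceed in three stages. First, extend $\Re$ to a \emph{relative} Hodge realisation on the constructible \'etale motives $\textbf{DA}^{\text{\'et}}_{\text{ct}}(U, \mathbb{Q})$ taking values in Saito's derived category of mixed Hodge modules on $U$, functorially compatible with Grothendieck's six operations; variants of this have been pursued by Huber, Ivorra, and Drew, building on the absolute case of \cite{HuberR1, HuberR2, AyoubBetti}. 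Second, take $\Psi^{H}_{\varphi}$ to be Saito's mixed Hodge module nearby cycle functor, whose underlying rational complex is the classical nearby cycle sheaf equipped with the Steenbrink-Saito mixed Hodge structure, and whose hypercohomology on $X_s$ recovers the limit MHS in \ref{eq:RationalLMHS}. Third, check that $\Re$ intertwines $\mathrm{R}(f_{s})_*$ with its analytic Hodge counterpart - a formal consequence once the relative realisation is in place.

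The crux of the argument is the intertwining $\Re \circ \mathrm{R}\Psi_f \simeq \Psi^{H}_{\varphi} \circ \Re$. Following the template of Ayoub's comparison of motivic and $\ell$-adic nearby cycles (Th\'eor\`eme 10.11 of \cite{AyoubNCF}), the natural route is to reduce by Zariski descent and alteration to a semistable model $\mathcal{X}/\Delta$, in which case both sides can be computed explicitly against the logarithmic de Rham complex $\Omega^{*}_{\mathcal{X}/\Delta}(\log\, \mathcal{X}_0)\vert_{\mathcal{X}_0}$ of Proposition \ref{FiberComplexIso}. On the underlying Betti complex this comparison is Th\'eor\`eme 4.9 of \cite{AyoubBetti}; what must be supplied are the monodromy weight filtration $W_*$ and the Hodge filtration $F^*$.

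The monodromy weight filtration should be tractable. In Ayoub's construction it arises from the unipotent logarithm of the monodromy operator acting on the nearby motivic spectrum; its Betti realisation is $N_0 = \log T_0$ up to the conventional twist, so the Jacobson-Morozov weight filtration it determines coincides with Steenbrink's $W_*$ defined via $\nu_N$ in \ref{eq:NuNQ}-\ref{eq:Nmorphism1}. The main obstacle lies in the Hodge filtration: the motivic side carries no intrinsic $F$-filtration, whereas Saito's $F$ is dictated by the $V$-filtration on the underlying $\mathscr{D}$-module. Bridging these requires either a motivic enhancement of Saito's $\mathscr{D}$-module formalism along the lines of Drew's motivic Hodge modules, or a direct construction of $F^*$ on the motivic nearby cycles by matching with Steenbrink's filtration $\widetilde{\mathscr{F}}^{p,\text{an}}_{\mathbb{C}}\vert_0$ via the comparison \ref{eq:VtildeIsoToAna}. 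Producing such a bridge in a form functorial enough to apply to arbitrary constructible motives - rather than merely to motives arising from smooth projective families - is precisely the technical gap that has so far prevented a published proof, and is what makes the statement a conjecture rather than a theorem.
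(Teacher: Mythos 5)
The statement you are addressing is Conjecture \ref{limitmotiveconjecture}, and the paper does not prove it: the authors attribute it to Ayoub and state explicitly that ``currently this conjecture is not available in literature and the proof of it, even though could be considered as `routine,' can be technically very hard.'' There is therefore no proof in the paper against which to compare your argument.

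Your proposal is a sensible outline of the strategy an expert would follow: extend $\Re$ to a relative Hodge realisation valued in mixed Hodge modules compatible with the six operations, take $\Psi^{H}_{\varphi}$ to be Saito's nearby cycle functor, use Th\'eor\`eme 4.9 of \cite{AyoubBetti} for the underlying Betti comparison, recover $W_*$ from the monodromy logarithm, and match the limit Hodge filtration with $\widetilde{\mathscr{F}}^{p,\text{an}}_{\mathbb{C}}\vert_0$ via Proposition \ref{SteenbrinkSchmid}. But it is a plan, not a proof. The decisive step --- the intertwining $\Re \circ \mathrm{R}\Psi_f \simeq \Psi^{H}_{\varphi}\circ \Re$ as an isomorphism of \emph{filtered} objects, in particular producing the Hodge filtration on the realisation of the motivic nearby cycle and identifying it with Steenbrink's --- is exactly what you leave unconstructed, and you say so yourself in the final sentences. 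Since that step is the entire mathematical content of the conjecture, your text is an accurate diagnosis of why the statement remains conjectural rather than an argument establishing it; it should be presented as such and not as a proof.
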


However currently this conjecture is not available in literature and the proof of it, even though could be considered as `routine,' can be technically very hard.

\section{Computations of the limit MHS} \label{ConstructionLMHS}

In this section, we will compute the limit MHS of the mirror family at the large complex structure limit. In the rest of this paper, we will focus on the fibration that forms a deformation of the mirror threefold. Given a one-parameter mirror pair $(M,W)$ of Calabi-Yau threefolds, the deformation of the mirror threefold $W$ is said to be rationally defined if there exists a fibration of the form $\pi_{\mathbb{Q}}$ \ref{eq:Piq} whose analytification forms a deformation of $W$. Furthermore, we will assume that $0$ is the large complex structure limit, the meaning of which will be explained later. Recall that the analytification of $\pi_{\mathbb{Q}}$ is $\pi_{\mathbb{C}}^{\text{an}}$, and let its restriction to $\mathcal{W}:=(\pi_{\mathbb{C}}^{\text{an}})^{-1}(\Delta)$ be
\begin{equation} \label{eq:pideltaW}
\pi_{\Delta}:\mathcal{W} \rightarrow \Delta,
\end{equation}
where the only singular fiber is $\mathcal{W}_0:=\pi_{\Delta}^{-1}(0)$, which is also the analytification of $Y=\pi_{\mathbb{Q}}^{-1}(0)$. Let the restriction of $\pi_{\Delta}$ to $\mathcal{W}^*:=\mathcal{W} - \mathcal{W}_0$  be $\pi_{\Delta^*}$
\begin{equation}
\pi_{\Delta^*}: \mathcal{W}^* \rightarrow \Delta^*.
\end{equation}
A smooth fiber $\mathcal{W}_{\varphi}$ of the fibration \ref{eq:pideltaW} is a Calabi-Yau threefold with Hodge diamond
\begin{equation}\label{eq:HodgeDiamond} 
\begin{tabular}{ c c c c c c c }
 &  &  & 1 &  &  &  \\ 
 &  & 0&   & 0&  &  \\   
 & 0&  & \,\,\,\,$h^{11}$ &  & 0&   \\  
1&  & 1&  &1 & & 1 \\ 
 & 0&  & \,\,\,\,$h^{11}$ &  & 0&   \\ 
 &  & 0&   & 0&  &  \\   
 &  &  & 1 &  &  &  \\
\end{tabular},
\end{equation}
where $h^{11}=\text{dim}_{\mathbb{C}}\,H^{1,1}(\mathcal{W}_{\varphi})$ is a positive integer. From Section \ref{LimitMHS}, there exists a complex $ \mathbf{Z}^{\bullet}_{\text{MS}}$ in the derived category $D^b(\textbf{MHS}_{\mathbb{Q}})$ whose cohomologies compute the limit MHS of the mirror family \ref{eq:pideltaW} at the large complex structure limit. While from last section, there exists a limit mixed motive $\mathcal{Z}_{\text{MS}} \in \textbf{DM}_{\text{gm}}(\mathbb{Q}, \mathbb{Q})$ constructed at the large complex structure limit, whose Hodge realisation is conjectured to be isomorphic to $\mathbf{Z}^{\bullet}_{\text{MS}}$. The following two easy propositions deal with the cases where $q \neq 3$.
\begin{proposition}
\begin{equation}
H^q(\mathbf{Z}^{\bullet}_{\text{MS}}) =0, ~~\text{when}~q \neq 0,2,3,4,6
\end{equation}
\end{proposition}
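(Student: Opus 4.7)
The plan is to unwind the definition of $\mathbf{Z}^{\bullet}_{\text{MS}}$ and reduce the vanishing to a statement about singular cohomology of a smooth Calabi-Yau fiber, then read off the vanishing from the Hodge diamond \ref{eq:HodgeDiamond}.

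First I would use the defining property \ref{eq:LimitHCQQMHC}, which says $H^q(\mathbf{Z}^{\bullet}_{\text{MS}}) = \underline{H}^q(\mathbf{Y}^{\bullet})$ as a MHS. The underlying rational vector space of $\underline{H}^q(\mathbf{Y}^{\bullet})$ is, by the explicit description in \ref{eq:RationalLMHS}, the hypercohomology $\mathbb{H}^q(\mathcal{W}_0, \text{R}\Psi_{\pi_\Delta}\mathbb{Q})$ of the nearby cycle complex on the singular fiber $\mathcal{W}_0$. Since a MHS is zero if and only if its underlying rational vector space is zero, it suffices to prove the vanishing at this level.

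Next I would invoke Lemma \ref{SGAlemma}, which identifies $\mathbb{H}^q(\mathcal{W}_0, \text{R}\Psi_{\pi_\Delta}\mathbb{Q})$ with the singular cohomology $H^q(\mathcal{W}_\varphi, \mathbb{Q})$ of a nearby smooth fiber $\mathcal{W}_\varphi$. Thus the problem is reduced to showing
\begin{equation}
H^q(\mathcal{W}_\varphi, \mathbb{Q}) = 0 \quad \text{for } q \neq 0, 2, 3, 4, 6. \notag
\end{equation}

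The smooth fiber $\mathcal{W}_\varphi$ is a compact Calabi-Yau threefold, so it has real dimension $6$ and therefore $H^q(\mathcal{W}_\varphi, \mathbb{Q}) = 0$ for $q < 0$ or $q > 6$. For $0 \le q \le 6$ with $q \neq 0, 2, 3, 4, 6$, namely $q = 1$ and $q = 5$, the Hodge diamond \ref{eq:HodgeDiamond} gives $h^{1,0} = h^{0,1} = 0$ and $h^{3,2} = h^{2,3} = 0$, so $H^1(\mathcal{W}_\varphi, \mathbb{C}) = H^5(\mathcal{W}_\varphi, \mathbb{C}) = 0$ via the Hodge decomposition, hence the rational cohomology in these degrees vanishes as well.

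There is no serious obstacle; the only small point to be careful with is making sure one cites the right step in order to avoid circularity — in particular, Lemma \ref{SGAlemma} gives the identification with $H^q(\mathcal{W}_\varphi, \mathbb{Q})$ directly, so one never has to appeal to the whole limit mixed Hodge structure machinery to establish a purely dimensional/topological vanishing. The proof is therefore a short bookkeeping argument: unwind the definitions, apply \ref{SGAlemma}, and quote the Hodge diamond.
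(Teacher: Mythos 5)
Your proof is correct and follows exactly the paper's argument: identify $H^q(\mathbf{Z}^{\bullet}_{\text{MS}})$ with $\mathbb{H}^q(\mathcal{W}_0,\text{R}\Psi_{\pi_\Delta}\mathbb{Q})$, apply Lemma \ref{SGAlemma} to reduce to $H^q(\mathcal{W}_\varphi,\mathbb{Q})$, and read off the vanishing from the Hodge diamond \ref{eq:HodgeDiamond}. You merely spell out a few bookkeeping steps (dimension bounds, the $q=1,5$ cases) that the paper leaves implicit.
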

\begin{proof}
The Hodge diamond \ref{eq:HodgeDiamond} of a smooth fiber $\mathcal{W}_{\varphi}$ tells us
\begin{equation}
H^q(\mathcal{W}_{\varphi},\mathbb{Q})=0, ~~\text{when}~q \neq 0,2,3,4,6,
\end{equation} 
hence this proposition is an immediate result of Lemma \ref{SGAlemma}.
\end{proof}

\begin{proposition}
$H^q(\mathbf{Z}^{\bullet}_{\text{MS}})$ is a Hodge-Tate object when $q=0,2,4,6$, and we have
\begin{equation}
H^0(\mathbf{Z}^{\bullet}_{\text{MS}})=\mathbb{Q}(0),~H^2(\mathbf{Z}^{\bullet}_{\text{MS}})=\mathbb{Q}(-1)^{h^{11}},~ H^4(\mathbf{Z}^{\bullet}_{\text{MS}})= \mathbb{Q}(-2)^{h^{11}},~H^6(\mathbf{Z}^{\bullet}_{\text{MS}}) = \mathbb{Q}(-3).
\end{equation}
\end{proposition}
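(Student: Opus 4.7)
My plan is to pin down the limit MHS on $H^{q}(\mathbf{Z}^{\bullet}_{\text{MS}})$ for $q\in\{0,2,4,6\}$ by combining the local freeness of Deligne's canonical extensions of the Hodge bundles with the pure Hodge-Tate type of the nearby fibre. To begin, Lemma \ref{SGAlemma} identifies the underlying $\mathbb{Q}$-vector space of $H^{q}(\mathbf{Z}^{\bullet}_{\text{MS}})$ with $H^{q}(\mathcal{W}_{\varphi},\mathbb{Q})$, and the Hodge diamond \ref{eq:HodgeDiamond} gives dimensions $1,h^{11},h^{11},1$ for $q=0,2,4,6$ respectively. So it only remains to show that the limit MHS in each of these degrees is pure Hodge-Tate of type $(q/2,q/2)$.

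The second step is to use the description of the Hodge filtration on the limit MHS recalled in Section \ref{SectionlimitMHS}: $F^{p}H^{q}_{\text{lim}}$ is the fibre at $0$ of the locally free extension $\widetilde{\mathscr{F}}^{p}_{\mathbb{Q}}$, and therefore its dimension coincides with the dimension of $F^{p}$ on a generic smooth fibre $H^{q}(\mathcal{W}_{\varphi},\mathbb{C})$. Consequently $\dim \mathrm{Gr}^{p}_{F}H^{q}_{\text{lim}} = h^{p,q-p}(\mathcal{W}_{\varphi})$, which by the Hodge diamond vanishes for $p\neq q/2$ and equals $\dim H^{q}(\mathcal{W}_{\varphi},\mathbb{Q})$ for $p=q/2$.

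The last step exploits Deligne's canonical bigrading $H^{q}_{\text{lim},\mathbb{C}}=\bigoplus_{p,p'} I^{p,p'}$ of a rational mixed Hodge structure, which satisfies $\mathrm{Gr}^{p}_{F} = \bigoplus_{p'} I^{p,p'}$ and the conjugate relation $\dim I^{p,p'} = \dim I^{p',p}$. The vanishings from the previous paragraph give $I^{p,p'}=0$ whenever $p\neq q/2$, and by the symmetry also whenever $p'\neq q/2$. The unique surviving piece is $I^{q/2,q/2}$, which lives in weight $q$ and is of pure Hodge type $(q/2,q/2)$; hence $H^{q}(\mathbf{Z}^{\bullet}_{\text{MS}})$ is isomorphic to $\mathbb{Q}(-q/2)^{d_{q}}$ with $d_{q}\in\{1,h^{11},h^{11},1\}$, which is the desired statement.

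The main subtlety I expect is the second step, where the local freeness of $\widetilde{\mathscr{F}}^{p}_{\mathbb{Q}}$ (and of its graded quotients) is used to transport Hodge numbers from the smooth fibres to the degenerate one. An alternative route that avoids the bigrading entirely is to observe that the polarised VHS $R^{q}\pi_{\Delta^{*},*}\mathbb{Q}$ is of constant pure Hodge-Tate type for $q\in\{0,2,4,6\}$, so its monodromy preserves a positive-definite form and, being unipotent, must be trivial; then $N_{0}=0$ on these degrees and the limit MHS simply coincides with the MHS on a nearby fibre.
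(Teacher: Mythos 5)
Your proposal is correct and follows essentially the same route as the paper: both arguments reduce to the observation that the locally free canonical extension forces the limit Hodge filtration in degree $q\in\{0,2,4,6\}$ to jump only at $p=q/2$, which leaves no room for a non-Tate or non-split mixed structure. The only difference is that you make explicit, via Deligne's $I^{p,p'}$ bigrading, the step the paper compresses into ``this limit Hodge filtration imposes very strong restriction and the only possibility is\ldots'', which is a welcome clarification rather than a divergence.
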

\begin{proof}
When $q=0$, the Hodge diamond \ref{eq:HodgeDiamond} tells us
\begin{equation}
H^0(\mathcal{W}_{\varphi},\mathbb{Q})=\mathbb{Q},
\end{equation}
and the pure Hodge structure on $H^0(\mathcal{W}_{\varphi},\mathbb{Q})$ is just $\mathbb{Q}(0)$. The subsheaf filtration $\mathcal{F}^p$ of $\mathcal{V}:=\text{R}^0 \,\pi_{\Delta^*,*} \mathbb{Z} \otimes \mathcal{O}_{\Delta^*}$ is given by
\begin{equation}
\mathcal{F}^0=\mathcal{V},~\mathcal{F}^1=0,
\end{equation}
therefore the limit Hodge filtration on $\widetilde{\mathcal{V}}\vert_0$ is given by
\begin{equation}
	F^0=\widetilde{\mathcal{V}}\vert_0,~F^1=0.
\end{equation} 
This limit Hodge filtration imposes very strong restriction on the possible limit MHS and in fact the only possibility is
\begin{equation}
H^0(\mathbf{Z}^{\bullet}_{\text{MS}}) = \mathbb{Q}(0).
\end{equation}
When $q=2$, the Hodge diamond \ref{eq:HodgeDiamond} tells us
\begin{equation}
H^{2,0}(\mathcal{W}_{\varphi},\mathbb{Q})=H^{0,2}(\mathcal{W}_{\varphi},\mathbb{Q})=0,
\end{equation} 
thus the pure Hodge structure on $H^2(\mathcal{W}_{\varphi},\mathbb{Q})$ is isomorphic to $\mathbb{Q}(-1)^{h^{11}}$. The subsheaf filtration $\mathcal{F}^p$ of $\mathcal{V}:=\text{R}^2 \,\pi_{\Delta^*,*} \mathbb{Z} \otimes \mathcal{O}_{\Delta^*}$ is given by
\begin{equation}
\mathcal{F}^0=\mathcal{F}^1=\mathcal{V},~\mathcal{F}^2=0,
\end{equation}
so the limit Hodge filtration on $\widetilde{\mathcal{V}}\vert_0$ is
\begin{equation}
F^0=F^1=\widetilde{\mathcal{V}}\vert_0,~F^2=0.
\end{equation} 
This limit Hodge filtration again imposes very strong restriction on the possible limit MHS and the only possibility is 
\begin{equation}
H^2(\mathbf{Z}^{\bullet}_{\text{MS}})= \mathbb{Q}(-1)^{h^{11}}.
\end{equation}
Similarly we also have
\begin{equation}
H^4(\mathbf{Z}^{\bullet}_{\text{MS}})= \mathbb{Q}(-2)^{h^{11}},~H^6(\mathbf{Z}^{\bullet}_{\text{MS}})= \mathbb{Q}(-3).
\end{equation}
\end{proof}

The rest of this section is devoted to the computation of $H^3(\mathbf{Z}^{\bullet}_{\text{MS}})$, which depends on the mirror symmetry conjecture in an essential way.

\subsection{The periods of the holomorphic three-form}

An essential ingredient in mirror symmetry is the nowhere-vanishing holomorphic three-form $\Omega$. Since in this paper we are only concerned with the mirror pairs where the deformation of the mirror threefold is rationally defined, hence we will assume that the three-form $\Omega$ is also rationally defined, i.e. it is a section of the bundle $\mathscr{F}^3_{\mathbb{Q}}$ associated to the rationally defined mirror family. We will further assume $\Omega$ has logarithmic poles along the smooth components of the fiber $Y$ over the large complex structure limit, hence it extends to a global section of $\widetilde{\mathscr{F}}^3_{\mathbb{Q}}$ with nonzero value at the large complex structure limit. 

Now, let us look at the periods of $\Omega$. Given an arbitrary point $\varphi_0 \in \Delta^*$, Poincar\'e duality implies the existence of a unimodular skew symmetric pairing on $H_3(\mathcal{W}_{\varphi_0},\mathbb{Z})$ (modulo torsions), which allows us to choose a symplectic basis $\{A_0,A_1,B_0,B_1\}$ that satisfy the following pairings \cite{MarkGross,PhilipXenia, PhilipXenia1}
\begin{equation}
A_a \cdot A_b=0,~~B_a \cdot B_b=0,~~A_a \cdot B_b= \delta_{ab}.
\end{equation}
Suppose the dual of this basis is given by $\{\alpha^0,\alpha^1,\beta^0,\beta^1\}$, i.e. the only non-vanishing pairings are
\begin{equation}
\alpha^a (A_b)=\delta^a_b, ~ \beta^a(B_b)=\,\delta^a_b,
\end{equation}
and this dual forms a basis of $H^3(\mathcal{W}_{\varphi_0},\mathbb{Z})$ (modulo torsions). 

\begin{remark}
The torsions of homology or cohomology groups will be ignored in this paper.
\end{remark}

For simplicity, we will also denote $B_0$, $B_1$ by $A_2$, $A_3$ and denote $\beta^0$, $\beta^1$ by $\alpha^2$, $\alpha^3$ respectively. Recall that the local system $V_{\mathbb{Z}}:=\text{R}^3 \,\pi_{\Delta^*,*} \mathbb{Z}$, with its dual denoted by $V_{\mathbb{Z}}^{\vee}$. In a simply connected local neighborhood of $\varphi_0$, $A_a$ extends to a local section $A_a(\varphi)$ of the local system $V_{\mathbb{Z}}^{\vee}$ and $\alpha^a$ extends to a local section $\alpha^a(\varphi)$ of the local system $V_{\mathbb{Z}}$. Notice that  $\{A_a(\varphi)\}_{a=0}^3$ form a basis of $H_3(\mathcal{W}_{\varphi},\mathbb{Z})$, while its dual $\{\alpha^a(\varphi)\}_{a=0}^3$ form a basis of $H^3(\mathcal{W}_{\varphi},\mathbb{Z})$ \cite{Zein}. Since the unimodular skew symmetric pairing is preserved by the extension, $\{A_a(\varphi)\}_{a=0}^3$ is actually a symplectic basis of $H_3(\mathcal{W}_{\varphi},\mathbb{Z})$. The integral periods of the three-form $\Omega$ with respect to the basis $\{A_a(\varphi)\}_{a=0}^3$ are defined by
\begin{equation} \label{eq:IntegralPeriodDefinition}
z_a(\varphi)= \int_{A_a(\varphi)} \Omega(\varphi),~\mathcal{G}_b(\varphi)=\int_{B_b(\varphi)} \Omega(\varphi),
\end{equation}
which are holomorphic (multi-valued) functions \cite{MarkGross,PhilipXenia, PhilipXenia1}. Define the period vector $\amalg(\varphi)$ by
\begin{equation}
\amalg(\varphi):=(\mathcal{G}_0(\varphi),\mathcal{G}_1(\varphi),z_0(\varphi),z_1(\varphi))^t,
\end{equation}
where $^t$ means transpose. Under the comparison isomorphism \ref{eq:FdeltaIso}, $\Omega(\varphi)$ has an expansion with respect to the basis $\{ \alpha^a(\varphi) \}_{a=0}^3$ given by
\begin{equation} \label{eq:expansionOmega}
I_{\text{B}}(\Omega(\varphi))=z_0(\varphi)\, \alpha^0(\varphi)+z_1(\varphi)\, \alpha^1(\varphi)+\mathcal{G}_0(\varphi) \, \beta^0(\varphi) +\mathcal{G}_1(\varphi) \,\beta^1(\varphi) 
\end{equation}
However the extension of the integral period to the punctured disc $\Delta^*$ is generally multi-valued, which is called the monodromy. Recall that the local system $V_{\mathbb{Z}}^\vee$ is uniquely determined by a representation $\Phi$ of the fundamental group of $\pi_1(\Delta^*,\varphi_0)$ \cite{Zein}
\begin{equation} \label{eq:Phirepn}
\Phi: \pi_1(\Delta^*,\varphi_0) \rightarrow \text{Aut}(H_3(\mathcal{W}_{\varphi_0},\mathbb{Z})).
\end{equation}
The fundamental group $\pi_1(\Delta^*,\varphi_0)$ is isomorphic to $\mathbb{Z}$ with a generator $T$, and the representation $\Phi$ is uniquely determined by the image of $\Phi(T)$. Since unimodular pairing is preserved by extension, the image of $\Phi$ lies in $\text{Sp}(4,\mathbb{Z})$ with respect to the basis $\{ A_a \}_{a=0}^3$ \cite{CoxKatz, PhilipXenia, PhilipXenia1}. Let the matrix of $\Phi(T)$ with respect to the basis $\{ A_a \}_{a=0}^3$ be $T_C \in \text{Sp}(4,\mathbb{Z})$, i.e.
\begin{equation}
\Phi(T).A_a=\sum_{b=0}^3\,A_b\,(T_C)_{b\,a}.
\end{equation}
The monodromy of the integral period vector $\amalg$ is given by
\begin{equation}
\amalg_a(\varphi_0)=\int_{A_a} \Omega(\varphi_0) \rightarrow \sum_b (T_C)_{ba} \int_{A_b} \Omega(\varphi_0) =\sum_b \,(T_C)_{ba} \,\amalg_b(\varphi_0).
\end{equation}
As in Section \ref{sectionDCE}, let the dual representation of $\Phi$ be
\begin{equation} \label{eq:PsiRepn}
\Psi: \pi_1(\Delta^*,\varphi_0)  \rightarrow \text{Aut}(H^3(\mathcal{W}_{\varphi_0},\mathbb{Z})),
\end{equation}
and the nilpotent operator $N=\log \Psi(T)$ is defined in formula \ref{eq:Ndefn}. We have assumed 0 is the large complex structure limit of the deformation of $W$, the meaning of which is given as below.

\begin{definition}
The point 0 is the large complex structure limit of the mirror family if the monodromy around it is maximally unipotent \cite{MarkGross}, i.e. 
\begin{equation} \label{eq:Nunipotent}
N^3 \neq 0,~N^4=0.
\end{equation}
\end{definition}

\subsection{The canonical periods of the three-form}
From Griffiths transversality, the three-form $\Omega$ satisfies a Picard-Fuchs equation of order 4
\begin{equation}
\mathcal{L}\,\Omega=0,
\end{equation}
where $\mathcal{L}$ is a differential operator with polynomial coefficients $R_i(\varphi)$ of the form \cite{MarkGross,CoxKatz,PhilipXenia}
\begin{equation}
\mathcal{L}=R_4(\varphi) \,\vartheta^4+R_3(\varphi)\, \vartheta^3 +R_2(\varphi)\, \vartheta^2+ R_1(\varphi) \, \vartheta^1+R_0(\varphi), ~\text{with}~ \vartheta=\varphi \,\frac{d}{d\varphi}.
\end{equation}
Therefore the integral periods $z_a(\varphi)$ and $\mathcal{G}_b(\varphi)$ are solutions of the differential equation \ref{eq:PFequation}. Moreover, the operator $\mathcal{L}$ has a regular singularity at 0 \cite{KatzRegularity, DeligneEquations}. From the definition of the large complex structure limit, the monodromy around 0 is maximally unipotent, hence the solution space of the Picard-Fuchs equation
\begin{equation} \label{eq:PFequation}
\mathcal{L}\,\varpi(\varphi)=0
\end{equation}
has a distinguished basis consists of four linearly independent solutions of the form 
\begin{equation} \label{eq:PeriodsCan}
\begin{aligned}
\varpi_0 &= f_0,  \\
\varpi_1 &=\frac{1}{2\pi i}\left(f_0 \log \varphi+f_1\right), \\
\varpi_2 &=\frac{1}{(2\pi i)^2}\left( f_0 \log^2 \varphi +2\, f_1 \log \varphi + f_2\right), \\
\varpi_3 &=\frac{1}{(2 \pi i)^3} \left( f_0 \log^3 \varphi +3 \, f_1 \log^2 \varphi +3\, f_2 \log \varphi +f_3 \right),
\end{aligned} 
\end{equation}
where $\{f_j\}_{j=0}^3$ are holomorphic functions on $\Delta$, so they admit power series in $\varphi$ that converge on $\Delta$ \cite{MarkGross,CoxKatz,PhilipXenia}. If we impose the conditions
\begin{equation} \label{eq:boundarycondition}
f_0(0)=1,~f_1(0)=f_2(0)=f_3(0)=0,
\end{equation}
the four solutions \ref{eq:PeriodsCan} will be uniquely determined, which are called the canonical periods of the three-form $\Omega$ \cite{PhilipXenia2}. The canonical period vector $\varpi$ is defined as
\begin{equation}
\varpi:=(\varpi_0,\,\varpi_1,\,\varpi_2,\,\varpi_3)^t.
\end{equation}
Since the integral periods $\{\amalg_a\}_{a=0}^3$ form another basis of the solution space of \ref{eq:PFequation}, there exists a matrix $S \in \text{GL}(4,\mathbb{C})$ such that
\begin{equation} \label{eq:PiSomega}
\amalg_a =\sum_{b=0}^3\,S_{a\,b}\, \varpi_b.
\end{equation}
The expansion \ref{eq:expansionOmega} now becomes
\begin{equation} \label{eq:OmegaExpansionChange}
I_{\text{B}}(\Omega(\varphi))=\sum_{a=0}^3\,\alpha^a(\varphi) \, \amalg_a(\varphi)=\sum_{a,b}\,\alpha^a(\varphi)\, S_{a\,b}\, \varpi_b(\varphi).
\end{equation}
Let $\{\gamma^a\}_{a=0}^3$ be a basis of $H^3(\mathcal{W}_{\varphi_0},\mathbb{C})$ defined by
\begin{equation} \label{eq:TransGammaAlpha}
\gamma^a=\sum_{b=0}^3\,\alpha^b\,S_{b\,a},
\end{equation}
then the expansion \ref{eq:OmegaExpansionChange} becomes
\begin{equation} \label{eq:OmegaExpansionBetti}
I_{\text{B}}(\Omega(\varphi))=\sum_{a=0}^3 \gamma^a(\varphi)\, \varpi_a(\varphi),
\end{equation}
where $\gamma^a(\varphi)$ is the extension of $\gamma^a$ in a local neighborhood of $\varphi_0$. Let the dual of $\{\gamma^a\}_{a=0}^3$ be $\{C_a\}_{a=0}^3$,  which forms a basis of $H_3(\mathcal{W}_{\varphi_0},\mathbb{C})$. Furthermore, the canonical period $\varpi_a$ is also the integration of $\Omega(\varphi)$ over $C_a(\varphi)$
\begin{equation}
\varpi_a(\varphi)=\int_{C_a(\varphi)} \Omega(\varphi),
\end{equation}
where $C_a(\varphi)$ is the extension of $C_a$ in a local neighborhood of $\varphi_0$. Let the action of $T$ on the basis $\{C_a\}_{a=0}^3$ be
\begin{equation}
\Phi(T)\,C_a =\sum_b \,(T_{\text{Can}})_{b\,a}\,C_b,
\end{equation}
where $(T_{\text{Can}})_{b\,a}$ is the matrix of $\Phi(T)$ with respect to $\{C_a\}_{a=0}^3$. While the action of $T$ on the period $\varpi_a$ is given by
\begin{equation}
\varpi_a(\varphi_0)=\int_{C_a} \Omega(\varphi_0) \rightarrow \sum_b (T_{\text{Can}})_{b\,a} \int_{C_b} \Omega(\varphi_0) =\sum_b (T_{\text{Can}})_{b\,a} \,\varpi_b(\varphi_0),
\end{equation}
which is the monodromy of $\varpi_a$. On the other hand, the monodromy of $\varpi_a$ is induced by the analytical continuation of $\log \varphi$ around 0, i.e. $\log \varphi \rightarrow \log \varphi +2 \pi i$, so $(T_{\text{Can}})$ is easily found to be
\begin{align}
T_{\text{Can}}=
\begin{pmatrix}
 1 & 1 & 1 & 1 \\
 0 & 1 & 2 & 3 \\
 0 & 0 & 1 & 3 \\
 0 & 0 & 0 & 1 \\
\end{pmatrix}.
\end{align}
Since $\Psi$ is the dual representation of $\Phi$, and $\{\gamma^a\}_{a=0}^3$ is the dual of $\{C_a\}_{a=0}^3$, we have
\begin{equation}
\Psi(T)\,\gamma^a=\sum_{b=0}^3\,(T^{\vee}_{\text{Can}})_{b\,a}\,\gamma^b,
\end{equation}
so $T^{\vee}_{\text{Can}}$ is the matrix of $\Psi(T)$ with respect to $\{\gamma^a\}_{a=0}^3$
\begin{align}
T^{\vee}_{\text{Can}}=( (T_{\text{Can}})^t)^{-1}=
\begin{pmatrix}
 1 & 0 & 0 & 0 \\
 -1 & 1 & 0 & 0 \\
 1 & -2 & 1 & 0 \\
 -1 & 3 & -3 & 1 \\
\end{pmatrix}.
\end{align}
But in order to find the monodromy of the integral period $\Pi$, we will need to know how to compute the matrix $S$ in the formula \ref{eq:TransGammaAlpha}, which depends on the mirror symmetry conjecture in an essential way.

\subsection{Mirror symmetry} \label{TranIntCan}

The formulation of the mirror symmetry conjecture in this section comes from \cite{MarkGross, PhilipXenia,CoxKatz,PhilipXenia1}. From \cite{BryantGriffiths, GriffithsI, GriffithsII, GriffithsIII}, the integral periods $(z_0(\varphi),z_1(\varphi))$ cannot both vanish simultaneously and locally they define a projective coordinate of the moduli space $\mathscr{M}_C(W)$, in terms of which $\mathcal{G}_a(z)$ is a homogeneous function of degree one. The three-form $\Omega$ is also expressed as 
\begin{equation}
I_{\text{B}}(\Omega(z))=z_0\, \alpha^0(z)+z_1\, \alpha^1(z)+\mathcal{G}_0(z) \, \beta^0(z) +\mathcal{G}_1(z) \,\beta^1(z).
\end{equation}
The Griffiths transversality implies
\begin{equation}
\int_W\,\Omega(z) \wedge \frac{\partial\,\Omega(z)}{\partial\,z_a}=0,
\end{equation}
which yields the following relation
\begin{equation}
\mathcal{G}_a(z)=\frac{\partial  \mathcal{G}(z)}{\partial z_a},~\text{where}~\mathcal{G}(z):=\frac{1}{2}\,\sum_b\,z_b\,\mathcal{G}_b(z)
\end{equation}
The homogenous function $\mathcal{G}$ of degree two will be called the prepotential, and the Yukawa coupling $\kappa_{abc}$ is given by
\begin{equation}
\kappa_{abc}=\int_W\,\Omega(z) \wedge \frac{\partial^3\,\Omega(z)}{\partial z_a\partial z_b\partial z_c}=-\frac{\partial^3\,\mathcal{G}(z)}{\partial z_a\partial z_b\partial z_c}.
\end{equation} 
In all examples of one-parameter mirror pairs, there exists an integral symplectic basis $(A_0,A_1,B_0,B_1)$ of $H^3(\mathcal{W}_{\varphi_0},\mathbb{Z})$ such that 
\begin{equation} \label{eq:zivarpii01}
z_i(\varphi)=\lambda \, \varpi_i(\varphi),~i=0,1,
\end{equation}
where $\lambda$ is a nonzero constant. Notice that the canonical periods have been chosen to satisfy $\varpi_0(0)=1$ and $f_1(0)=0$ in \ref{eq:PeriodsCan}, hence the existence of such a basis is very critical in the mirror symmetry. Let us denote the quotient $\varpi_1/\varpi_0$ by $t'$
\begin{equation}
t' =\frac{z_1}{z_0}=\frac{\varpi_1}{\varpi_0}=\frac{1}{2 \pi i}\,\log \varphi+\frac{f_1(\varphi)}{f_0(\varphi)},
\end{equation}
and under the action of monodromy, it transforms in the way
\begin{equation}
t' \rightarrow t' +1.
\end{equation} 
\begin{definition} \label{Mirrormap}
The \textbf{mirror map} of the mirror pair $(M,W)$ maps a neighbourhood of the large complex structure limit to the K\"ahler moduli space $\mathscr{M}_K(M)$ via
$$\varphi\mapsto t'=\frac{\varpi_1}{\varpi_0}\in \mathscr{M}_K(M).$$\end{definition}

Now rescale the vector $\Pi$ by a factor $z_0$, and its normalisation is denoted by $\amalg_{\text{A}}$
\begin{equation}
\amalg_{\text{A}}=(\mathcal{G}_0/z_0,\mathcal{G}_1/z_0,\,1,\,z_1/z_0)^t,
\end{equation} 
which in terms of the affine coordinate $t'$, is expressed as
\begin{equation}
\amalg_{\text{A}}=( 2 \,\mathcal{G}-t' \,\frac{\partial  \mathcal{G}}{\partial t'},\,\frac{\partial \mathcal{G}}{\partial t'},\,1,\,t')^t.
\end{equation} 
While on the K\"ahler side, the mirror period vector $\Pi$ is defined by \cite{PhilipXenia, PhilipXenia1}
\begin{align}
\Pi =(\mathcal{F}_0,\mathcal{F}_1, 1,t)^t,~\text{with}~\mathcal{F}_0= 2 \,\mathcal{F}-t \,\frac{\partial  \mathcal{F}}{\partial t},~ \mathcal{F}_1=\frac{\partial \mathcal{F}}{\partial t}.
\end{align}
Since the prepotential $\mathcal{F}$ admits an expansion of the form \ref{eq:Prepotential}, $\Pi$ becomes
\begin{align} \label{eq:Piperiod}
\Pi =
\begin{pmatrix}
\frac{1}{6} \,Y_{111}\, t^3 -\frac{1}{2}\, Y_{001} \,t-\frac{1}{3}\, Y_{000} +2\, \mathcal{F}^{\text{np}}(t)-t \,\frac{d\mathcal{F}^{\text{np}}(t)}{dt}\\
-\frac{1}{2}\,Y_{111} \,t^2 - Y_{011}\,t-\frac{1}{2}\, Y_{001} +\frac{d\mathcal{F}^{\text{np}}(t)}{dt}\\
1 \\
t \\
\end{pmatrix}.
\end{align}

The formulation of the mirror symmetry conjecture in this paper follows from \cite{PhilipXenia, PhilipXenia1}.

\textbf{Mirror Symmetry Conjecture} 
\textit{ 
Given a mirror pair $(M,W)$ of Calabi-Yau threefolds, there exists an integral symplectic basis $(A_0,A_1,B_0,B_1)$ of $H^3(\mathcal{W}_{\varphi_0},\mathbb{Z})$ such that the formula \ref{eq:zivarpii01} is satisfied for a non-zero constant $\lambda$. The mirror map in Definition \ref{Mirrormap} induces an isomorphism between $\mathscr{M}_K(M)$ and a neighbourhood of the large complex structure limit of $\mathscr{M}_C(W)$, under which the normalised integral period vector $\amalg_{\text{A}}$ of $\mathscr{M}_C(W)$ is identified with the mirror period vector $\Pi$ of $\mathscr{M}_K(M)$. In particular, the prepotential $\mathcal{G}$ on the complex side is identified with the prepotential $\mathcal{F}$ on the K\"ahler side under the mirror map.
}

\begin{remark} \label{Y000remark}
On the complex side, the definitions of the integral periods $\amalg_a$ and the prepotential $\mathcal{G}$ depend on the choice of an integral symplectic basis $\{A_0,A_1,B_0,B_1\}$ of $H_3(\mathcal{W}_{\varphi_0},\mathbb{Z})$. However on the K\"ahler side, there is no natural integral symplectic structure on it, and the identification of $\amalg_{\text{A}}$ with $\Pi$ under the mirror map transfers the integral symplectic structure on the complex side to the K\"ahler side \cite{PhilipXenia}. The exact values of the coefficients $Y_{011}$, $Y_{001}$ and $Y_{000}$ depend on the choice of such an integral symplectic structurem and for a different choice, these coefficients transform in the following way \cite{PhilipXenia1}
\begin{equation}
\begin{aligned}
Y_{011} &\rightarrow Y_{011} +n, \, n \in \mathbb{Z}, \\
Y_{001} &\rightarrow Y_{001} +k, \, k \in \mathbb{Z}, \\
Y_{000} &\rightarrow Y_{000} +r', \,r' \in \mathbb{Q}.
\end{aligned}
\end{equation}
\end{remark}

From now on, the notation $t'$ will be identified with $t$, and the differences between them will be ignored. The monodromy of the mirror period $\Pi_a$ around $t=i \,\infty$ is induced by the action $t \rightarrow t+1$. Let the monodromy matrix of $\Pi_a$ be $T_K$, i.e. under the monodromy action $\Pi_a$ transforms in the way
\begin{equation}
\Pi_a \rightarrow \sum_{b=0}^3\,(T_K)_{b\,a} \,\Pi_b.
\end{equation}
Since $\mathcal{F}^{\text{np}}(t)$ admits a series expansion in $\exp 2 \pi i \,t$, its derivative $d\,\mathcal{F}^{\text{np}}(t)/dt$ is also invariant under the monodromy action $t \rightarrow t+1$. Thus the form of $\Pi_a$ in the formula \ref{eq:Piperiod} immediately tells us \cite{PhilipXenia2}
\begin{equation}
T_K=
\left(
\begin{array}{cccc}
 1 & 0 & 0 & 0 \\
 -1 & 1 & 0 & 0 \\
 \frac{1}{6} \, Y_{111}-Y_{001} & -\frac{1}{2} \, Y_{111}-Y_{011} & 1 & 1 \\
 \frac{1}{2} \, Y_{111}-Y_{011} & -Y_{111} & 0 & 1 \\
\end{array}
\right).
\end{equation}

The identification of $\Pi$ with $\amalg_{\text{A}}$ under the mirror map shows that the monodromy of $\Pi$ is the same as the monodromy of $\amalg$, i.e. 
\begin{equation} \label{eq:MonodromyOfIntegral}
T_C=T_K
\end{equation}
where we have used the fact that $z_0= \lambda \varpi_0$ is a holomorphic function in a neighbourhood of the large complex structure limit \cite{PhilipXenia1,PhilipXenia2}. Since the matrix $T_C$ lies in $\text{Sp}(4,\mathbb{Z})$, therefore $T_K$ is also an integral symplectic matrix, which immediately yields the following well-known fact in mirror symmetry.
\begin{corollary}
The numbers $2\,Y_{011}$ and $6\,Y_{001}$ are both integers, a priori $Y_{011}$ and $Y_{001}$ are rational numbers.
\end{corollary}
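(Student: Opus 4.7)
The plan is to exploit the integrality constraint already available from the monodromy representation. Recall that $\Phi$ in \ref{eq:Phirepn} takes values in $\mathrm{Aut}(H_3(\mathcal{W}_{\varphi_0},\mathbb{Z}))$ preserving the unimodular symplectic pairing, so in the integral symplectic basis $\{A_0,A_1,B_0,B_1\}$ the matrix $T_C$ lies in $\mathrm{Sp}(4,\mathbb{Z})$. By the mirror symmetry identification \ref{eq:MonodromyOfIntegral}, we have $T_C=T_K$. The idea is therefore simply to read off the constraint that every entry of the explicit matrix $T_K$ computed above must be an integer, and deduce the claim entry-by-entry using the fact that $Y_{111}\in\mathbb{Z}$.

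First I would inspect the second column of $T_K$. The $(3,2)$-entry equals $-\tfrac{1}{2}Y_{111}-Y_{011}$ and must lie in $\mathbb{Z}$. Since $Y_{111}\in\mathbb{Z}$ by the topological formula $Y_{111}=\int_M e\wedge e\wedge e$, this forces $\tfrac{1}{2}Y_{111}+Y_{011}\in\mathbb{Z}$, hence $2Y_{011}\in\mathbb{Z}$ after clearing the denominator. (A consistency check: the $(4,1)$-entry $\tfrac{1}{2}Y_{111}-Y_{011}$ must likewise be an integer, which is automatic given what we have just deduced.)

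Next I would treat the $(3,1)$-entry $\tfrac{1}{6}Y_{111}-Y_{001}$, which must lie in $\mathbb{Z}$. Writing this as $Y_{001}=\tfrac{1}{6}Y_{111}-n$ for some $n\in\mathbb{Z}$ and multiplying through by $6$ yields $6Y_{001}=Y_{111}-6n\in\mathbb{Z}$. This gives the second half of the claim, and in passing also establishes a posteriori that both $Y_{001}$ and $Y_{011}$ are rational, since multiplying a rational number by a positive integer cannot create rationality ex nihilo but reduces the problem to the integrality of $Y_{111}$, which is known.

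There is no real obstacle here beyond unpacking the matrix $T_K$ correctly; the substance of the corollary is entirely in the prior identification $T_C=T_K$ (which in turn uses the mirror symmetry conjecture and the fact that the non-perturbative correction $\mathcal{F}^{\mathrm{np}}$ and its derivative are monodromy-invariant). The only mild subtlety is the remark \ref{Y000remark} that the individual values of $Y_{011}$ and $Y_{001}$ depend on the choice of integral symplectic basis; but since the claim involves only $2Y_{011}$ and $6Y_{001}$, and the allowed ambiguities are by integers, the conclusion is manifestly basis-independent.
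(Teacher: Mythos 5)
Your proposal is correct and is essentially the paper's own argument: the paper deduces the corollary immediately from the fact that $T_K=T_C\in\mathrm{Sp}(4,\mathbb{Z})$, so the entries $-\tfrac{1}{2}Y_{111}-Y_{011}$ and $\tfrac{1}{6}Y_{111}-Y_{001}$ must be integers, and combining with $Y_{111}\in\mathbb{Z}$ gives $2Y_{011},\,6Y_{001}\in\mathbb{Z}$. You have simply spelled out the entry-by-entry reading that the paper leaves implicit.
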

Now we are ready to compute the matrix $S$ in the formula \ref{eq:PiSomega}. Near the large complex structure limit,  formula \ref{eq:boundarycondition} shows 
\begin{equation}
t=\frac{1}{2 \pi i} \,\log \varphi+ \mathcal{O}(\varphi),
\end{equation}
therefore the large complex structure limit on the complex side corresponds to $t= i\, \infty$ on the K\"ahler side \cite{PhilipXenia, PhilipXenia2}. In the limit $t \rightarrow i\, \infty$, the leading parts of $\amalg_{\text{A}}$ and  $\varpi$ are given by
\begin{align} \label{eq:Pilimit}
\amalg_{\text{A}} \equiv \Pi \sim
\begin{pmatrix}
\frac{1}{6}\, Y_{111}\, t^3 -\frac{1}{2}\, Y_{001}\, t-\frac{1}{3}\, Y_{000} \\
-\frac{1}{2}\,Y_{111} \,t^2 - Y_{011}\,t-\frac{1}{2} \,Y_{001} \\
1 \\
t \\
\end{pmatrix}
,~ \varpi \sim
\begin{pmatrix}
1 \\
t \\
t^2 \\
t^3 \\
\end{pmatrix},
\end{align}
from which the matrix $S$ can be easily evaluated \cite{PhilipXenia2}
\begin{equation}
S\,=\lambda \,
\left(
\begin{array}{cccc}
 -\frac{1}{3}\, Y_{000} & -\frac{1}{2} \,Y_{001} & 0 & \frac{1}{6}\, Y_{111} \\
 -\frac{1}{2} \,Y_{001} & -\,Y_{011} & -\frac{1}{2} \,Y_{111} & 0 \\
 1 & 0 & 0 & 0 \\
 0 & 1 & 0 & 0 \\
\end{array}
\right).
\end{equation}
Here $\lambda$ is the constant in the formula \ref{eq:zivarpii01}. Finally we are ready to compute $H^3(\mathbf{Z}^{\bullet}_{\text{MS}})$ using the methods in \cite{Schmid,MarkGross,Hain}. 
 
\subsection{The weight filtration}

The monodromy operator $N$ in the formula \ref{eq:Ndefn} defines a weight filtration on the rational vector space $H^3(\mathcal{W}_{\varphi_0},\mathbb{Q})$, which induces a weight filtration $W_*$ on $\widetilde{\mathcal{V}}\vert_{0,\mathbb{Q}}$ under the isomorphism in the formula \ref{eq:Newrho} \cite{PetersSteenbrink, Schmid, MarkGross}. To compute the weight filtration on $H^3(\mathcal{W}_{\varphi_0},\mathbb{Q})$ defined by $N$, it is more convenient to choose a new basis $\{\beta^a\}_{a=0}^3$. From section \ref{TranIntCan}, $Y_{111}$, $Y_{011}$ and $Y_{001}$ are all rational numbers, and let $S_1$ be the matrix
\begin{align}
S_1=
\begin{pmatrix}
0 & \frac{1}{2}\, Y_{001} & 0 & -Y_{111} \\
-\frac{1}{2}\, Y_{001} & Y_{011} & -Y_{111} & 0 \\
1 & 0 & 0 & 0 \\
0 & -1 & 0 & 0 \\
\end{pmatrix}
\end{align}
with determinant $Y_{111}^2$, therefore it lies in $\text{GL}(4,\mathbb{Q})$. Now define the new basis $\{\beta^a\}_{a=0}^3$ by
\begin{equation} \label{eq:betaalphas1}
\beta^a =\sum_{b=0}^3\,(S_1)_{b\,a}\,\alpha^b,
\end{equation}
Moreover we have
\begin{equation} \label{eq:gammabetaS2}
\gamma^a=\sum_{b=0}^3\,(S_2)_{b\,a}\,\beta^b,
\end{equation}
where the matrix $S_2$ is given by
\begin{align}
S_2
=\lambda\,
\begin{pmatrix}
1 & 0 & 0 & 0 \\
 0 & -1 & 0 & 0 \\
 0 & 0 & \frac{1}{2} & 0 \\
 \frac{Y_{000}}{3\, Y_{111}} & 0 & 0 & -\frac{1}{6} \\
\end{pmatrix}.
\end{align}

Recall that $\{\alpha^a\}_{a=0}^3$ is the dual basis of $\{A_a\}_{a=0}^3$, hence the matrix of $\Psi(T)$ with respect to $\{\alpha^a\}_{a=0}^3$ is given by $((T_C)^t)^{-1}$, which is equal to $((T_K)^t)^{-1}$ from the formula  \ref{eq:MonodromyOfIntegral}. From the formulas \ref{eq:Ndefn} and \ref{eq:betaalphas1}, the action of $N$ on $\{\beta^a \}_{a=0}^3$ is
\begin{equation}
N \beta^0 =\beta^1, N \beta^1=\beta^2, N \beta^2=\beta^3, N \beta^3=0.
\end{equation}
The weight filtration on $H^3(\mathcal{W}_{\varphi_0},\mathbb{Q})$ defined by $N$ can be computed inductively \cite{MarkGross,Schmid,Hain}. The first step in the induction process is to let $W_{-1}$ and $W_6$ be
\begin{equation}
W_{-1}H^3(\mathcal{W}_{\varphi_0},\mathbb{Q})=0,~
W_6 H^3(\mathcal{W}_{\varphi_0},\mathbb{Q})=H^3(\mathcal{W}_{\varphi_0},\mathbb{Q}).
\end{equation}
Then $W_0$ and $W_5$ are given by
\begin{equation}
\begin{aligned}
W_0 H^3(\mathcal{W}_{\varphi_0},\mathbb{Q})&= \text{im}\,\, N^3=\mathbb{Q}\, \beta^3, \\
W_5 H^3(\mathcal{W}_{\varphi_0},\mathbb{Q})&=\text{ker}\, N^3=\mathbb{Q}\,\beta^1+\mathbb{Q}\,\beta^2+\mathbb{Q}\,\beta^3,
\end{aligned}
\end{equation}
and the quotient $W_5/W_0$ is
\begin{equation}
W_5 H^3(\mathcal{W}_{\varphi_0},\mathbb{Q})/W_0 H^3(\mathcal{W}_{\varphi_0},\mathbb{Q}) \simeq \mathbb{Q}\,\beta^1+\mathbb{Q}\,\beta^2.
\end{equation}
Since the operator $N^2$ defines a zero map on the quotient $W_5/W_0$,  we have 
\begin{equation}
W_1H^3(\mathcal{W}_{\varphi_0},\mathbb{Q})=W_0 H^3(\mathcal{W}_{\varphi_0},\mathbb{Q}), ~
W_4H^3(\mathcal{W}_{\varphi_0},\mathbb{Q})=W_5 H^3(\mathcal{W}_{\varphi_0},\mathbb{Q}).
\end{equation}
and the quotient $W_4/W_1$ is 
\begin{equation}
 W_4 H^3(\mathcal{W}_{\varphi_0},\mathbb{Q})/W_1 H^3(\mathcal{W}_{\varphi_0},\mathbb{Q}) \simeq \mathbb{Q}\,\beta^1+\mathbb{Q}\,\beta^2.
\end{equation}
Now $N$ defines a map on $W_4/W_1$ such that
\begin{equation}
N:\beta^1 \mapsto \beta^2,~\beta^2 \mapsto 0,
\end{equation}
so we have 
\begin{equation}
W_2H^3(\mathcal{W}_{\varphi_0},\mathbb{Q})=W_3H^3(\mathcal{W}_{\varphi_0},\mathbb{Q})=\mathbb{Q}\,\beta^2+\mathbb{Q}\,\beta^3.
\end{equation}
Inductively, we have found the weight filtration on $H^3(\mathcal{W}_{\varphi_0},\mathbb{Q})$ defined by $N$. The isomorphism $\rho_{\varphi_0}$ in the formula \ref{eq:Newrho} sends the basis $\{\beta^a\}_{a=0}^3$ of $H^3(\mathcal{W}_{\varphi_0},\mathbb{Q})$ to the basis $\{\widetilde{\beta}^a(0)\}_{a=0}^3$ of $\widetilde{\mathcal{V}}\vert_{0,\mathbb{Q}}$, similarly we also have
\begin{equation}
\widetilde{\beta}^a(0) =\sum_{b=0}^3\,(S_1)_{b\,a}\,\widetilde{\alpha}^b(0),~~
\widetilde{\gamma}^a(0) =\sum_{b=0}^3\,(S_2)_{b\,a}\,\widetilde{\beta}^b(0).
\end{equation}
Finally, we obtain the weight filtration $W_*(\widetilde{\mathcal{V}}\vert_{0,\mathbb{Q}})$ in the formula \ref{eq:NewLMHS} from $W_*H^3(\mathcal{W}_{\varphi_0},\mathbb{Q})$ through the isomorphism $\rho_{\varphi_0}$  in the formula \ref{eq:Newrho} 
\begin{equation} \label{eq:WeightFiltrationBeta}
\begin{aligned}
W_0 (\widetilde{\mathcal{V}}\vert_{0,\mathbb{Q}}) &=W_1( \widetilde{\mathcal{V}}\vert_{0,\mathbb{Q}})=\mathbb{Q}\,\widetilde{\beta}^3(0), \\
W_2 (\widetilde{\mathcal{V}}\vert_{0,\mathbb{Q}}) &=W_3( \widetilde{\mathcal{V}}\vert_{0,\mathbb{Q}})=\mathbb{Q}\,\widetilde{\beta}^2(0)+\mathbb{Q}\, \widetilde{\beta}^3(0), \\
W_4 (\widetilde{\mathcal{V}}\vert_{0,\mathbb{Q}}) &= W_5( \widetilde{\mathcal{V}}\vert_{0,\mathbb{Q}})=\mathbb{Q}\,\widetilde{\beta}^1(0)+\mathbb{Q}\,\widetilde{\beta}^2(0)+\mathbb{Q}\, \widetilde{\beta}^3(0), \\
W_6 (\widetilde{\mathcal{V}}\vert_{0,\mathbb{Q}}) &=\mathbb{Q}\,\widetilde{\beta}^0(0)+\mathbb{Q}\, \widetilde{\beta}^1(0)+\mathbb{Q}\, \widetilde{\beta}^2(0)+\mathbb{Q}\, \widetilde{\beta}^3(0).
\end{aligned}
\end{equation}

\subsection{The limit Hodge filtration }

Now we will compute the limit Hodge filtration. The fiber $ \widetilde{\mathscr{F}}^p_{\mathbb{Q}}\vert_0$ defines a decreasing filtration on $\widetilde{\mathscr{V}}_{\mathbb{Q}}\vert_0$, which gives us the limit Hodge filtration on $\widetilde{\mathcal{V}}\vert_0$ under the comparison isomorphism \cite{PetersSteenbrink, SteenbrinkLMHS, SteenbrinkVMHS, Schmid, Hain}
\begin{equation} \label{eq:IBHodge}
F^p(\widetilde{\mathcal{V}}\vert_0) = I_{\text{B}}(F^p(\widetilde{\mathscr{V}}^{\text{an}}_{\mathbb{C}}\vert_0)) = I_{\text{B}}(\widetilde{\mathscr{F}}^p_{\mathbb{Q}}\vert_0) \otimes_{\mathbb{Q}} \mathbb{C},
\end{equation}
where we have used the following canonical isomorphisms
\begin{equation}
F^p(\widetilde{\mathscr{V}}^{\text{an}}_{\mathbb{C}}\vert_0) = F^p(\widetilde{\mathscr{V}}_{\mathbb{C}}\vert_0)= F^p (\widetilde{\mathscr{V}}_{\mathbb{Q}}\vert_0) \otimes_{\mathbb{Q}} \mathbb{C} = \widetilde{\mathscr{F}}^p_{\mathbb{Q}}\vert_0 \otimes_{\mathbb{Q}} \mathbb{C}.
\end{equation}
From the assumption that the three-form $\Omega$ has logarithmic poles along the smooth components of the singular fiber $Y$, it extends to a global section of $\widetilde{\mathscr{F}}^3_{\mathbb{Q}}$ which implies
\begin{equation}
F^3(\widetilde{\mathscr{V}}_{\mathbb{Q}}\vert_0) \supset \mathbb{Q}\,\Omega\vert_0.
\end{equation}
Shrink the curve $S$ to an open affine subset if necessary, we will assume its tangent sheaf has a section of the form
\begin{equation}
\vartheta:=\varphi \,d/d \varphi.
\end{equation}
From Section \ref{sectionDCE}, the Gauss-Manin connection $\nabla_{\mathbb{Q}}$ of $\mathscr{V}_{\mathbb{Q}}$ canonically extends to a connection $\widetilde{\nabla}_{\mathbb{Q}}$ of $\widetilde{\mathscr{V}}_{\mathbb{Q}}$ that has a logarithmic pole along the point 0. From Griffiths transversality, $\widetilde{\nabla}_{\mathbb{Q},\vartheta}\,\Omega$ is a section of $\widetilde{\mathscr{F}}^2_{\mathbb{Q}} $, hence we find \cite{SteenbrinkVMHS,DeligneMirror}
\begin{equation}
F^2(\widetilde{\mathscr{V}}_{\mathbb{Q}}\vert_0) \supset \mathbb{Q}\,\Omega \vert_{0}+\mathbb{Q}\, (\widetilde{\nabla}_{\mathbb{Q},\vartheta}\,\Omega)\vert_{0}.
\end{equation}
\begin{remark}
Notice that the additional $\varphi$ in the definition of $\vartheta$ is to clear the logarithmic pole of $\widetilde{\nabla}_{\mathbb{Q}}$ at $\varphi=0$.
\end{remark}
Similarly, $\widetilde{\nabla}^2_{\mathbb{Q},\vartheta}\,\Omega$ is a  section of $\widetilde{\mathscr{F}}^1_{\mathbb{Q}} $ and $\widetilde{\nabla}^3_{\mathbb{Q},\vartheta}\,\Omega$ is a section of $\widetilde{\mathscr{F}}^0_{\mathbb{Q}}$, which shows
\begin{equation}
\begin{aligned}
F^1(\widetilde{\mathscr{V}}_{\mathbb{Q}}\vert_{0}) &\supset \mathbb{Q}\,\Omega\vert_{0}+\mathbb{Q}\,(\widetilde{\nabla}_{\mathbb{Q},\vartheta}\,\Omega)\vert_{0}+\mathbb{Q}\,(\widetilde{\nabla}^2_{\mathbb{Q},\vartheta}\,\Omega)\vert_{0},\\
F^0(\widetilde{\mathscr{V}}_{\mathbb{Q}}\vert_{0})&\supset \mathbb{Q}\, \Omega\vert_{0}+\mathbb{Q}\,(\widetilde{\nabla}_{\mathbb{Q},\vartheta}\,\Omega)\vert_{0}+\mathbb{Q}\,(\widetilde{\nabla}^2_{\mathbb{Q},\vartheta}\,\Omega)\vert_{0}+\mathbb{Q}\,(\widetilde{\nabla}^3_{\mathbb{Q},\vartheta}\,\Omega)\vert_{0}.
\end{aligned}
\end{equation}
Under the comparison isomorphism \ref{eq:VtildeIsoToAna}, the restriction of $\Omega$ to $\Delta$ gives us a section of $\widetilde{\mathcal{V}}$. With respect to the regularised frame $\{\widehat{\gamma}^a(\varphi)\}_{a=0}^3$ of $\mathcal{V}$, $I_{\text{B}}(\Omega \vert_{\Delta^*})$ has an expansion of the form
\begin{equation}
\begin{aligned}
I_{\text{B}}(\Omega \vert_{\Delta^*})\vert_{\varphi}&=\sum_{a=0}^3\,\gamma^a(\varphi)\, \varpi_a(\varphi) \\
               &=\sum_{a,b,c}\,\gamma^a(\varphi)\,\big(\exp (-\frac{\log \varphi}{2\pi i}N)\big)_{ab}\,\big(\exp (\frac{\log \varphi}{2 \pi i}N)\big)_{bc} \, \varpi_c(\varphi) \\
               &=\sum_{a,b}\,\widehat{\gamma}^a(\varphi)\big(\exp (\frac{\log \varphi}{2 \pi i}N)\big)_{ab}\, \varpi_b(\varphi),
\end{aligned}
\end{equation}
where $\big(\exp (-\frac{\log \varphi}{2\,\pi\,i}N)\big)_{ab}$ is the matrix of the operator $\exp (-\frac{\log \varphi}{2\pi i}N)$ with respect to the basis $\{\gamma^a\}_{a=0}^3$. From this expansion we find that
\begin{equation}
I_{\text{B}}(\Omega \vert_{\Delta}) \vert_0=\sum_{a,b}\,\lim_{\varphi \rightarrow 0} \widehat{\gamma}^a(\varphi)\big(\exp (\frac{\log \varphi}{2 \pi i}N)\big)_{ab}\, \varpi_b(\varphi) =\widetilde{\gamma}^0(0).
\end{equation}
In order to compute $I_{\text{B}}(\widetilde{\nabla}^p_{\mathbb{Q},\vartheta}\, \Omega\vert_{\Delta})\vert_0$, we will need the following equation \cite{CoxKatz}
\begin{equation}
I_{\text{B}}(\nabla^p_{\mathbb{Q},\vartheta}\, \Omega \vert_{\Delta^*})=\sum_{a=0}^3\gamma^a(\varphi) \int_{C_a(\varphi)} \nabla^p_{\mathbb{Q},\vartheta} \,\Omega \vert_{\Delta^*}=\sum_{a=0}^3\gamma^a(\varphi)\,\vartheta^j\, \varpi_a(\varphi),
\end{equation}
from which we obtain
\begin{equation}
\begin{aligned}
I_{\text{B}}(\widetilde{\nabla}^1_{\mathbb{Q},\vartheta}\, \Omega\vert_{\Delta})\vert_0=& \sum_{a,b}\, \lim_{\varphi \rightarrow 0}  \,\widehat{\gamma}^a(\varphi)\big(\exp (\frac{\log \varphi}{2\,\pi\,i}N)\big)_{ab}\, \vartheta\, \varpi_b(\varphi)  =\frac{1}{(2 \pi i)}\,\widetilde{\gamma}^1(0), \\
I_{\text{B}}(\widetilde{\nabla}^2_{\mathbb{Q},\vartheta}\, \Omega \vert_{\Delta}) \vert_0=& \sum_{a,b}\, \lim_{\varphi \rightarrow 0} \widehat{\gamma}^a(\varphi)\big(\exp (\frac{\log \varphi}{2\,\pi\,i}N)\big)_{ab}\, \vartheta^2\, \varpi_b(\varphi)  =\frac{2}{(2 \pi i)^2}\widetilde{\gamma}^2(0), \\
I_{\text{B}}(\widetilde{\nabla}^3_{\mathbb{Q},\vartheta}\, \Omega \vert_{\Delta}) \vert_0=&\sum_{a,b}\, \lim_{\varphi \rightarrow 0}  \widehat{\gamma}^a(\varphi)\big(\exp (\frac{\log \varphi}{2\,\pi\,i}N)\big)_{ab}\, \vartheta^3\, \varpi_b(\varphi) = \frac{6}{(2 \pi i)^3} \widetilde{\gamma}^3(0).
\end{aligned}
\end{equation}
Therefore $\{I_{\text{B}}(\widetilde{\nabla}^p_{\mathbb{Q},\vartheta}\, \Omega\vert_{\Delta})\vert_0 \}_{p=0}^3$ are linearly independent, which immediately implies
\begin{equation} \label{eq:HodgeFiltrationGamma}
\begin{aligned}
I_{\text{B}}(F^3( \widetilde{\mathscr{V}}_{\mathbb{Q}}\vert_0 ))&= \mathbb{Q} \, \widetilde{\gamma}^0(0), \\
I_{\text{B}}(F^2( \widetilde{\mathscr{V}}_{\mathbb{Q}}\vert_0 ))&= \mathbb{Q}\, \widetilde{\gamma}^0(0)+\mathbb{Q}\, \frac{1}{2 \pi i}\widetilde{\gamma}^1(0),\\
I_{\text{B}}(F^1( \widetilde{\mathscr{V}}_{\mathbb{Q}}\vert_0 ))&= \mathbb{Q}\, \widetilde{\gamma}^0(0)+\mathbb{Q}\,\frac{1}{2 \pi i}\widetilde{\gamma}^1(0)+\mathbb{Q}\,\frac{1}{(2 \pi i)^2}\widetilde{\gamma}^2(0), \\
I_{\text{B}}(F^0( \widetilde{\mathscr{V}}_{\mathbb{Q}}\vert_0 ))&= \mathbb{Q}\, \widetilde{\gamma}^0(0)+\mathbb{Q}\,\frac{1}{2 \pi i}\widetilde{\gamma}^1(0)+\mathbb{Q}\,\frac{1}{(2 \pi i)^2}\widetilde{\gamma}^2(0)+\mathbb{Q}\,\frac{1}{(2 \pi i)^3} \widetilde{\gamma}^3(0).
\end{aligned}
\end{equation}
While from the formula \ref{eq:IBHodge}, the limit Hodge filtration on $\widetilde{\mathcal{V}}\vert_0$ is given by the complexification of \ref{eq:HodgeFiltrationGamma}. Thus we have found the limit MHS $H^3(\mathbf{Z}^{\bullet}_{\text{MS}})$ of the mirror family at the large complex structure limit, and now we are ready to study it more carefully.

\section{The \texorpdfstring{$\zeta(3)$}{z(3)}  in the prepotential and the motivic conjectures}

In this section we will show that the limit MHS $H^3(\mathbf{Z}^{\bullet}_{\text{MS}})$ at the large complex structure limit splits into the direct sum $ \mathbb{Q}(-1) \oplus \mathbb{Q} (-2) \oplus \mathbf{M}$, where $\mathbf{M}$ is an extension of $\mathbb{Q}(-3)$ by $\mathbb{Q}(0)$. By studying the mixed Hodge-Tate structure $\mathbf{M}$ carefully, we will show how the $\zeta(3)$ in the prepotential $\mathcal{F}$ (formula \ref{eq:Prepotential}) is  connected to the Conjecture \textbf{GHP}. In particular, this section will reveal the motivic nature of the $\zeta(3)$ in the prepotential.

\subsection{The splitting of the limit MHS}

For simplicity, let $\{x^j\}_{j=0}^3$ be a new basis of $\widetilde{\mathcal{V}}\vert_0$ defined by
\begin{equation}
x^j:=(2 \pi i)^{3-j} \,\widetilde{\beta}^j(0),~j=0,1,2,3.
\end{equation}
Now the rational vector space $\widetilde{\mathcal{V}}\vert_{0,\mathbb{Q}}$ is spanned by $\{(2 \pi i)^{j-3}\,x^j\}_{j=0}^3$, and the weight filtration $W_*(\widetilde{\mathcal{V}}\vert_{0,\mathbb{Q}})$ in the formula  \ref{eq:WeightFiltrationBeta} becomes
\begin{equation} \label{eq:LimitWeightFiltration}
\begin{aligned}
&W_0 (\widetilde{\mathcal{V}}\vert_{0,\mathbb{Q}}) = W_1(\widetilde{\mathcal{V}}\vert_{0,\mathbb{Q}})=\mathbb{Q}\,x^3, \\
&W_2(\widetilde{\mathcal{V}}\vert_{0,\mathbb{Q}}) =W_3(\widetilde{\mathcal{V}}\vert_{0,\mathbb{Q}})=\mathbb{Q}\,\frac{1}{(2 \pi i)}\,x^2+\mathbb{Q}\, x^3, \\
&W_4(\widetilde{\mathcal{V}}\vert_{0,\mathbb{Q}}) =W_5(\widetilde{\mathcal{V}}\vert_{0,\mathbb{Q}})=\mathbb{Q}\,\frac{1}{(2 \pi i)^2}\,x^1+\mathbb{Q}\,\frac{1}{(2 \pi i)}\,x^2+\mathbb{Q}\, x^3, \\
& W_6(\widetilde{\mathcal{V}}\vert_{0,\mathbb{Q}}) =\mathbb{Q}\,\frac{1}{(2 \pi i)^3}x^0+\mathbb{Q}\, \frac{1}{(2 \pi i)^2}\,x^1+\mathbb{Q}\,\frac{1}{(2 \pi i)}\,x^2+\mathbb{Q}\, x^3. \\
\end{aligned}
\end{equation}
The basis $\{\widetilde{\gamma}^a(0)\}_{a=0}^3$ in the formula \ref{eq:gammabetaS2} are also expressed as
\begin{equation}
\widetilde{\gamma}^0(0) =\frac{\lambda}{(2 \pi i)^3}\,x^0 +\frac{\lambda \,Y_{000}}{3\,Y_{111}} x^3, \\
\widetilde{\gamma}^1(0) =-\,\frac{\lambda}{(2 \pi i)^2}\,x^1, \\
\widetilde{\gamma}^2(0) =\frac{\lambda}{2(2 \pi i)}\,x^2, \\
\widetilde{\gamma}^3(0) =-\,\frac{\lambda}{6}\, x^3,
\end{equation}
and from the formula \ref{eq:HodgeFiltrationGamma}, the Hodge filtration $F^*(\widetilde{\mathcal{V}}\vert_0)$ is given by
\begin{equation} \label{eq:LimitHodgeFiltration}
\begin{aligned}
&F^3( \widetilde{\mathcal{V}}\vert_0) =\frac{\lambda}{(2 \pi i)^3} \mathbb{Q}\, \text{span} \{x^0 +\frac{(2 \pi i)^3\, Y_{000}}{3\,Y_{111}}\, x^3\} \otimes_{\mathbb{Q}} \mathbb{C},\\
&F^2( \widetilde{\mathcal{V}}\vert_0) =\frac{\lambda}{(2 \pi i)^3} \mathbb{Q}\, \text{span} \{x^0 +\frac{(2 \pi i)^3\, Y_{000}}{3\,Y_{111}}\, x^3, x^1\} \otimes_{\mathbb{Q}} \mathbb{C}, \\
&F^1( \widetilde{\mathcal{V}}\vert_0) =\frac{\lambda}{(2 \pi i)^3} \mathbb{Q}\, \text{span} \{x^0 +\frac{(2 \pi i)^3\, Y_{000}}{3\,Y_{111}}\, x^3, x^1,x^2\} \otimes_{\mathbb{Q}} \mathbb{C},\\
&F^0( \widetilde{\mathcal{V}}\vert_0) =\frac{\lambda}{(2 \pi i)^3} \mathbb{Q}\, \text{span} \{x^0 +\frac{(2 \pi i)^3 \,Y_{000}}{3\,Y_{111}} \, x^3, x^1, x^2, x^3\} \otimes_{\mathbb{Q}} \mathbb{C}. \\
\end{aligned}
\end{equation}

The key observation is the following theorem. 

\begin{theorem} \label{MainTheorem}
Given a mirror pair $(M,W)$ of Calabi-Yau threefolds, if assuming the mirror symmetry conjecture, the limit MHS $H^3(\mathbf{Z}^{\bullet}_{\text{MS}}) $ of the mirror family at large complex structure limit splits into the direct sum
\begin{equation} \label{eq:SplitH3C}
H^3(\mathbf{Z}^{\bullet}_{\text{MS}}) \simeq \mathbb{Q}(-1) \oplus \mathbb{Q} (-2) \oplus \mathbf{M},
\end{equation}
where $\mathbf{M}$ is a two-dimensional MHS with underlying rational vector space
\begin{equation}
\mathbf{M}_{\mathbb{Q}}=\mathbb{Q}\,\frac{1}{(2 \pi i)^3}\,x^0+\mathbb{Q}\, x^3.
\end{equation}
While the weight filtration $W_*\,\mathbf{M}$ is given by
\begin{equation}
\begin{aligned}
&W_{-1}\,\mathbf{M}=W_{-2}\,\mathbf{M}=\cdots=0,\\
&W_0\, \mathbf{M}=W_1 \,\mathbf{M}=\cdots=W_5 \,\mathbf{M}=\mathbb{Q}\,x^3,\\
&W_6 \,\mathbf{M} =W_7 \,\mathbf{M}=\cdots=\mathbb{Q}\,\frac{1}{(2 \pi i)^3}\,x^0+\mathbb{Q}\,x^3,
\end{aligned}
\end{equation}
and the Hodge filtration $F^*\textbf{M}$ is given by
\begin{equation}
\begin{aligned}
&F^4 \,\mathbf{M}=F^5\, \mathbf{M}=\cdots =0, \\
&F^3\, \mathbf{M} =F^2 \,\mathbf{M}=F^1\,\mathbf{M}=  \frac{\lambda}{(2 \pi i)^3} \mathbb{Q}\,\left(x^0 +\frac{(2 \pi i)^3\, Y_{000}}{3\,Y_{111}} \, x^3\right) \otimes_{\mathbb{Q}} \mathbb{C}, \\
&F^0\,\mathbf{M} = \frac{\lambda}{(2 \pi i)^3} \left(\mathbb{Q}\,\left(x^0 + \frac{(2 \pi i)^3\, Y_{000}}{3\,Y_{111}}\, x^3\right)+\mathbb{Q}\,x^3\right) \otimes_{\mathbb{Q}} \mathbb{C}, \\
&F^{-1} \,\mathbf{M}=F^{-2}\,\mathbf{M}=\cdots=F^0\,\mathbf{M}.
\end{aligned}
\end{equation}
\end{theorem}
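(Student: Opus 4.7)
The plan is to read off the splitting directly from the explicit weight filtration \ref{eq:LimitWeightFiltration} and Hodge filtration \ref{eq:LimitHodgeFiltration} already established in Section \ref{ConstructionLMHS}. The key structural observation is that $x^1$ and $x^2$ each appear as clean standalone generators in the description of the Hodge filtration, whereas $x^0$ is entangled with $x^3$ only through the single combination $x^0 + \frac{(2\pi i)^3 Y_{000}}{3Y_{111}} x^3$ generating $F^3$. This feature is exactly what forces the two pure Tate summands to split off while concentrating the nontrivial extension data inside the two-dimensional piece spanned by $x^0$ and $x^3$.

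First, I would verify that $N_1 := \mathbb{Q}\cdot\frac{1}{2\pi i}x^2$ and $N_2 := \mathbb{Q}\cdot\frac{1}{(2\pi i)^2}x^1$ are sub-MHS of $H^3(\mathbf{Z}^{\bullet}_{\text{MS}})$ isomorphic to $\mathbb{Q}(-1)$ and $\mathbb{Q}(-2)$ respectively. Each is a rational line; intersecting the ambient weight filtration with $N_1$ places it in $W_2 \setminus W_1$, and intersecting the ambient Hodge filtration gives $F^1 \cap N_{1,\mathbb{C}} = N_{1,\mathbb{C}}$ while $F^2 \cap N_{1,\mathbb{C}} = 0$, since $F^2$ is the $\mathbb{C}$-span of $x^0 + \frac{(2\pi i)^3 Y_{000}}{3Y_{111}} x^3$ and $x^1$, which does not meet $\mathbb{C} x^2$. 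This places $N_1$ at pure Hodge type $(1,1)$ of weight two, realising $\mathbb{Q}(-1)$; the argument for $N_2$ is identical one weight higher.

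Next, I would check that $\mathbf{M}_{\mathbb{Q}} := \mathbb{Q}\cdot\frac{1}{(2\pi i)^3}x^0 + \mathbb{Q}\cdot x^3$ is a sub-MHS with the filtration data listed in the statement. Since $x^3$ has weight zero and $\frac{1}{(2\pi i)^3}x^0$ has weight six, intersection with \ref{eq:LimitWeightFiltration} yields $W_i\mathbf{M} = \mathbb{Q} x^3$ for $0 \le i \le 5$ and $W_6\mathbf{M} = \mathbf{M}_\mathbb{Q}$, as claimed. Intersecting the Hodge filtration with $\mathbf{M}_\mathbb{C} = \mathbb{C} x^0 + \mathbb{C} x^3$ kills any generator involving $x^1$ or $x^2$, leaving $F^3\mathbf{M} = F^2\mathbf{M} = F^1\mathbf{M} = \mathbb{C}\bigl(x^0 + \frac{(2\pi i)^3 Y_{000}}{3Y_{111}}x^3\bigr)$ and $F^0\mathbf{M} = \mathbf{M}_\mathbb{C}$, matching the statement. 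Thus $\mathbf{M}$ is a two-step MHS forming an extension of $\mathbb{Q}(-3)$ by $\mathbb{Q}(0)$.

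To conclude the direct sum decomposition, I observe that the four vectors $\frac{1}{2\pi i}x^2,\ \frac{1}{(2\pi i)^2}x^1,\ \frac{1}{(2\pi i)^3}x^0,\ x^3$ form a rational basis of $\widetilde{\mathcal{V}}\vert_{0,\mathbb{Q}}$, so $N_1 \oplus N_2 \oplus \mathbf{M}_{\mathbb{Q}}$ exhausts the ambient rational space. Since each summand inherits its weight and Hodge filtrations by intersection with the ambient ones, the inclusion is a morphism of MHS that is bijective on underlying rationals, hence an isomorphism of MHS. The main subtle point is not computational but conceptual: without the mirror-symmetric identification $\amalg_{\mathrm{A}} \equiv \Pi$ used in Section \ref{TranIntCan} to pin down the change-of-basis matrix $S$, neither the specific splitting nor the particular extension class proportional to $Y_{000}/Y_{111}$ in $\mathbf{M}$ would be available. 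The mirror conjecture is doing all the real work, and the verification I am proposing is then routine linear algebra in the basis $\{x^j\}$.
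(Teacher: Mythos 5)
Your proposal is correct and follows exactly the route the paper takes: the paper's own proof simply asserts that the theorem "follows immediately" from the weight filtration \ref{eq:LimitWeightFiltration} and the Hodge filtration \ref{eq:LimitHodgeFiltration}, and your verification spells out the routine linear algebra (the induced filtrations on $\mathbb{Q}\,\tfrac{1}{2\pi i}x^2$, $\mathbb{Q}\,\tfrac{1}{(2\pi i)^2}x^1$ and $\mathbf{M}_{\mathbb{Q}}$, and the fact that the ambient filtrations decompose as direct sums of the induced ones) that the paper leaves implicit. Your closing remark that the mirror conjecture does the real work, via the matrix $S$, is also consistent with how the paper frames the result.
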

\begin{proof}
This theorem follows immediately from the weight filtration $W_*(\widetilde{\mathcal{V}}\vert_{0,\mathbb{Q}})$ in the formula \ref{eq:LimitWeightFiltration} and the Hodge filtration $F^*(\widetilde{\mathcal{V}}\vert_0)$ in the formula \ref{eq:LimitHodgeFiltration}.
\end{proof}

\subsection{The extensions induced by the limit MHS}

We now study the extensions defined by $\mathbf{M}$ and its dual. From Theorem \ref{MainTheorem}, the graded piece 
\begin{equation}
\text{Gr}_0^W\, \mathbf{M}:=W_0\, \mathbf{M}/W_{-1}\, \mathbf{M}
\end{equation}
is just $W_0\, \mathbf{M}$ itself. The Hodge filtration $F^*\,\textbf{M}$ induces a  weight 0 pure Hodge structure on $W_0\, \mathbf{M}$, which is isomorphic to the Tate object $\mathbb{Q}(0)$. Similarly, $F^*\,\textbf{M}$ induces a weight 6 pure Hodge structure on the graded piece
\begin{equation}
\text{Gr}_6^W\, \mathbf{M}:=W_6\, \mathbf{M}/W_5\, \mathbf{M}
\end{equation}
that is isomorphic to $\mathbb{Q}(-3)$. Furthermore, the inclusion $W_0\, \mathbf{M} \subset \mathbf{M}$ defines an injective homomorphism from $\mathbb{Q}(0)$ to $\mathbf{M}$, the quotient of which is the pure Hodge structure $\text{Gr}_6^W\, \mathbf{M}$. Therefore we have found a short exact sequence in $\textbf{MHS}_{\mathbb{Q}}$ of the form
\begin{equation} \label{eq:SESM}
\begin{tikzcd}
0 \arrow[r] & \mathbb{Q}(0) \arrow[r] & \mathbf{M} \arrow[r] & \mathbb{Q}(-3)\arrow[r] & 0,
\end{tikzcd}
\end{equation}
hence $\textbf{M}$ forms an extension of $\mathbb{Q}(-3)$ by $\mathbb{Q}(0)$. 
\begin{remark}
Therefore we have shown that for every $q\in \mathbb{Z} $, $H^q(\mathbf{Z}_{\text{MS}}^{\bullet})$ is a mixed Hodge-Tate object which is nonzero for only finitely many $q$. So the equivalence in Proposition \ref{MTHequivalence} immediately implies that $\mathbf{Z}_{\text{MS}}^{\bullet}$ is essentially an object of the full-subcategory $D^b(\textbf{MHT}_{\mathbb{Q}})$.
\end{remark}

In the abelian category $\textbf{MHS}_{\mathbb{Q}}$, the dual of an object $\textbf{H}$ is defined as \cite{Carlson,PetersSteenbrink}
\begin{equation}
\mathbf{H}^{\vee}:=\text{Hom}_{\textbf{MHS}_{\mathbb{Q}}}(\textbf{H},\mathbb{Q}(0)).
\end{equation}
The dual operation is exact, i.e. it sends a short exact sequence to a short exact sequence \cite{PetersSteenbrink}.  Therefore the dual of the short exact sequence \ref{eq:SESM} is also a short exact sequence
\begin{equation} \label{eq:SESMDual}
\begin{tikzcd}
0 \arrow[r] & \mathbb{Q}(3) \arrow[r] & \mathbf{M}^{\vee} \arrow[r] & \mathbb{Q}(0)\arrow[r]& 0.
\end{tikzcd}
\end{equation}

\textbf{Theorem  \ref{eq:ThmPeriodofMdual} } \textit{Assuming the mirror symmetry conjecture, the dual object $\mathbf{M}^{\vee}$ is an extension of $\mathbb{Q}(0)$ by $\mathbb{Q}(3)$ whose image in $\mathbb{C}/(2  \pi  i)^3\,\mathbb{Q}$ is the coset of $-(2\pi i)^3 \, Y_{000}/(3\,Y_{111})$.}

\begin{proof}
The short exact sequence \ref{eq:SESMDual} immediately shows $\mathbf{M}^{\vee}$ is an extension of $\mathbb{Q}(0)$ by $\mathbb{Q}(3)$. Let $\{x_j\}_{j=0}^3$ be the dual of $\{x^j\}_{j=0}^3$, i.e. their pairings are
\begin{equation}
x_j(x^k)=\delta^k_j,
\end{equation}
so $\{x_j\}_{j=0}^3$ forms a basis of $(\widetilde{\mathcal{V}}\vert_{0,\mathbb{C}})^{\vee}$. From Definition \ref{internalhom}, the rational vector space of $\mathbf{M}^{\vee}$ is the subspace of $(\widetilde{\mathcal{V}}\vert_{0,\mathbb{Q}})^{\vee}$ spanned by $\{(2 \pi i)^3\, x_0, x_3 \}$
\begin{equation}
(\mathbf{M}^{\vee})_{\mathbb{Q}}:=\mathbb{Q}\,(2 \pi i)^3\, x_0+\mathbb{Q}\,x_3.
\end{equation}
The Definition \ref{internalhom} tells us that the weight filtration $W_*\,\mathbf{M}^{\vee}$ is given by
\begin{equation}
W_l\, \mathbf{M}^{\vee}:=\{ {\phi:\phi\,(W_r\, \mathbf{M}) \subset W_{r+l}\,\mathbb{Q}(0)} \},
\end{equation}
from which we find that
\begin{equation}
\begin{aligned}
&W_{-7}\,\mathbf{M}^{\vee}=W_{-8}\,\mathbf{M}^{\vee}= \cdots=0, \\
&W_{-6}\,\mathbf{M}^{\vee}=\cdots=W_{-1}\,\mathbf{M}^{\vee}=\mathbb{Q}\,(2 \pi i)^3\,x_0, \\
&W_0\, \mathbf{M}^{\vee}=W_1 \,\mathbf{M}^{\vee}=\cdots=\mathbb{Q}\,(2 \pi i)^3\,x_0+\mathbb{Q}\,x_3.
\end{aligned}
\end{equation}
While from Definition \ref{internalhom}, the Hodge filtration $F^*\,\mathbf{M}^{\vee}$ is given by
\begin{equation}
F^p\,\mathbf{M}^{\vee}:=\{ \phi:\phi\,(F^r\, \mathbf{M}) \subset F^{r+p} \,\mathbb{Q}(0) \},
\end{equation}
from which we find that
\begin{equation} \label{eq:MdualHodge}
\begin{aligned}
&F^1\,\mathbf{M}^{\vee}=F^2\,\mathbf{M}^{\vee}=\cdots=0, \\
&F^0\,\mathbf{M}^{\vee}=F^{-1}\,\mathbf{M}^{\vee}=F^{-2}\,\mathbf{M}^{\vee}=(2 \pi i)^3 \,\mathbb{Q}\, \left(-\frac{(2 \pi i)^3\, Y_{000}}{3\,Y_{111}}\,x_0 +x_3\right) \otimes_{\mathbb{Q}} \mathbb{C}, \\
&F^{-3}\,\mathbf{M}^{\vee}=F^{-4}\,\mathbf{M}^{\vee}=\cdots= (2 \pi i)^3\,\left(\mathbb{Q}\,\left(-\frac{(2 \pi i)^3 \, Y_{000}}{3\,Y_{111}}\,x_0 +x_3\right)+\mathbb{Q}\, x_0 \right)\otimes_{\mathbb{Q}} \mathbb{C}.
\end{aligned}
\end{equation} 
Hence Appendix \ref{subsec:ExtensionMHS} immediately shows that the image of $ \mathbf{M}^{\vee}$ in
\begin{equation}
\text{Ext}^1_{\textbf{MHS}_{\mathbb{Q}}}\left(\mathbb{Q}(0), \mathbb{Q}(3)\right) \simeq\mathbb{C}/(2 \pi i)^3 \,\mathbb{Q}
\end{equation}
is the coset of $-(2 \pi i)^3 \, Y_{000}/(3\,Y_{111})$. 

\end{proof}

Now we are ready to state the conclusion of this paper.

\subsection{Conclusion}

For all examples of the mirror pairs where $Y_{000}$ has been computed, it is always of the form
\begin{equation}
Y_{000}=-3\, \chi (M)\, \frac{\zeta(3)}{(2 \pi i)^3}+r,~r \in \mathbb{Q},
\end{equation}
hence for such a mirror pair the image of $\mathbf{M}^{\vee}$ in $\mathbb{C}/(2 \pi i)^3 \,\mathbb{Q}$ is the coset of a rational multiple of $\zeta(3)$. This is compatible with the Remark \ref{Y000remark}, i.e. given a different choice of an integral symplectic basis of $H_3(\mathcal{W}_{\varphi_0},\mathbb{Z})$, the coefficient $Y_{000}$ is changed to $Y_{000}+r',\,r' \in \mathbb{Q}$, but the coset of $-(2 \pi i)^3 \, Y_{000}/(3\,Y_{111})$ in $\mathbb{C}/(2 \pi i)^3 \,\mathbb{Q}$ does not change. 

From the studies of mirror symmetry, there are many one-parameter mirror pairs $(M,W)$ of Calabi-Yau threefolds that satisfy the requirements in this paper, i.e.
\begin{enumerate}
\item The deformation of the mirror threefold $W$ is rationally defined, and the large complex structure limit is a rational point, while the singular fiber over it is reduced with nonsingular components crossing normally;

\item The nowhere-vanishing three-form $\Omega$ of the mirror family is rationally defined and it has logarithmic poles along the smooth components of the singular fiber over the large complex structure limit;

\item The coefficient $Y_{000}$ in the expansion of the prepotential $\mathcal{F}$ (formula \ref{eq:Prepotential}) has been computed explicitly.
\end{enumerate}
The most famous example that satisfies these requirements is the quintic Calabi-Yau threefold and its mirror family \cite{PhilipXenia, MarkGross}. Now assuming Conjecture \ref{limitmotiveconjecture} and the mirror symmetry conjecture, for a one-parameter mirror pair that satisfies the above requirements, there exists a limit mixed motive $\mathcal{Z}_{\text{MS}} \in \textbf{DM}_{\text{gm}}(\mathbb{Q}, \mathbb{Q})$ constructed at the large complex structure limit such that its Hodge realisation $\mathfrak{R}(\mathcal{Z}_{\text{MS}})$ is a complex in $D^b(\textbf{MHT}_{\mathbb{Q}})$ whose cohomologies compute the limit MHS of the mirror family at the large complex structure limit. Furthermore, the computations in Section \ref{ConstructionLMHS} and this section show that the dual of $\mathcal{Z}_{\text{MS}}$ fulfill a compelling example of the Conjecture \textbf{GHP}.

\begin{remark}
On the other hand, if instead we assume the mirror symmetry conjecture, conjectures \textbf{GHP} and \ref{limitmotiveconjecture} from the beginning, given a one-parameter mirror pair $(M,W)$ that satisfies the conditions 1 and 2 above, the computations in this paper show that the coefficient $Y_{000}$ of the prepotential \ref{eq:Prepotential} must be of the form
\begin{equation}
Y_{000}=\frac{\,r_1}{(2 \pi i)^3} \,\zeta(3)+\,r_2,~r_1,r_2 \in \mathbb{Q}.
\end{equation}
Hence the Conjecture \textbf{GHP} provides a motivic interpretation of the occurrence of $\zeta(3)$ in the coefficient $Y_{000}$ of the prepotential \ref{eq:Prepotential}.
\end{remark}

\section*{Acknowledgments}

It is a great pleasure to acknowledge the many communications with Joseph Ayoub, who generously corrected and clarified many confusions about nearby cycle functors and mixed motives. Further thanks go to Francis Brown, Annette Huber-Klawitter and Marc Levine for very helpful answers to queries about the mixed Tate motives. We are also grateful to Noriko Yui for a reading of the draft. W.Y. is very grateful to the Mathoverflow community, especially Mikhail Bondarko, who helped him to learn the theories of Hodge conjectures and mixed motives. W.Y. is also very grateful for many discussions with Philip Candelas, Xenia de la Ossa and Noriko Yui on the arithmetic of Calabi-Yau threefolds. W.Y. wishes to acknowledge the support from the Oxford-Palmer Graduate Scholarship and the generosity of Dr. Peter Palmer and Merton College. M.K. was supported in part by the EPSRC grant  `Symmetries and Correspondences', EP/M024830/1.

\newpage

\appendix

\section{Mixed Hodge structures} \label{MHSandExtensions}

In these appendices, we include some well-known definitions and basic properties of motives and mixed Hodge structures. The intention is to make the paper more accessible to readers with a physics background.
Regarding mixed Hodge structures, the reader is referred to \cite{PetersSteenbrink,Carlson} for more systematic and complete treatments. 
For motives, there is the collection of articles in the now classic volume  \cite{motives}.

Throughout this section, the ring $R$ will be either $\mathbb{Z}$ or $\mathbb{Q}$.

\subsection{Definition of the mixed Hodge structures} \label{subsec:DefinitionMHS}

An (pure) $R$-Hodge structure $H$ of weight $l\in \mathbb{Z}$ consists of the following data:
\begin{enumerate}
	\item  An $R$-module $H_{R}$ of finite rank,
	
	\item  A decreasing filtration $F^*H$ of the complex vector space $H_{\mathbb{C}}:=H_{R} \otimes_{R} \mathbb{C}$,
\end{enumerate}
such that $H_{\mathbb{C}}$ admits a decomposition
\begin{equation}
H_{\mathbb{C}}=\oplus_{p+q=l}\, H^{p,q},
\end{equation}
where $H^{p,q}:=F^{p} \cap \overline{F}^q$ \cite{PetersSteenbrink}. Here the complex conjugation is defined with respect to the real structure $H_{\mathbb{R}}:=H_{R} \otimes_{R} \mathbb{R}$ of $H_{\mathbb{C}}$. This definition immediately implies that 
\begin{equation}
F^k=\oplus_{p \geq k}\, H^{p,\,l-p}.
\end{equation}
The simplest example of a pure Hodge structure is the Hodge-Tate object $R(n),n \in \mathbb{Z}$ with weight $-2\,n$.
\begin{definition}
The $R$ module of the Hodge-Tate object $R(n)$ is
\begin{equation}
(2 \pi i)^n R \subset \mathbb{C}
\end{equation}
and its Hodge decomposition is
\begin{equation}
R(n)^{-n,-n}=(2 \pi i)^n R \otimes_R \mathbb{C}.
\end{equation}
\end{definition}
An $R$-mixed Hodge structure (MHS) consists of the following data:
\begin{enumerate}
	\item An $R$-module $H_{R}$ of finite rank,
	
	\item An increasing weight filtration $W_*$ of $H_{\mathbb{Q}}:=H_{R}\otimes_{R} \mathbb{Q}$,
	
	\item A decreasing Hodge filtration $F^*$ of $H_{\mathbb{C}}:=H_{R} \otimes_{R} \mathbb{C}$,
\end{enumerate}
such that the Hodge filtration $F^*$ induces a pure Hodge structure of weight $l$ on every graded piece $\text{Gl}_{l}^W\,W $ \cite{PetersSteenbrink}
\begin{equation}
\text{Gl}_{l}^W\,W :=W_l/W_{l-1}.
\end{equation}
Morphisms between two $R$-MHS are defined as the linear maps which are compatible with both the weight filtrations and Hodge filtrations \cite{Carlson,PetersSteenbrink}.
\begin{definition}
Given two $R$-MHS $A$ and $B$, a morphism of weight $2\,m$ from $A$ to $B$ is a homomorphism $\phi$ from $A_R$ to $B_R$ such that
\begin{equation}
\phi \,(W_l\,A) \subset W_{l+2\,m} B~\forall\,l, ~~
\phi \,(F^p\,A) \subset F^{p+m} \,B, ~\forall\,p. 
\end{equation}
\end{definition}
The category of $R$-MHS will be denoted by $\textbf{MHS}_R$, and it is an abelian category \cite{PetersSteenbrink}. The internal Hom operation is defined as follows \cite{PetersSteenbrink,Carlson}.

\begin{definition} \label{internalhom}
Given two $R$-MHS $A$ and $B$, there exists an $R$-MHS $\text{Hom}(A,B)$ with $R$-module
\begin{equation}
\text{Hom}(A,B)_R:=\text{Hom}(A_R,B_R),
\end{equation}
while its weight filtration and Hodge filtration are given by
\begin{equation}
\begin{aligned}
&W_l\,(\text{Hom}(A,B))=\{\phi:\phi\,(W_r A) \subset W_{r+l}\, B, \forall\, r \}, \\
&F^p\,(\text{Hom}(A,B))=\{\phi: \phi\,(F^r A) \subset F^{r+p}\, B ,\forall\, r \}.
\end{aligned}
\end{equation} 
\end{definition}

\subsection{Extensions of MHS}\label{subsec:ExtensionMHS}

An extension of $B$ by $A$ in the abelian category $\textbf{MHS}_R$ is given by a short exact sequence
\begin{equation} \label{eq:ExtensionAHB}
\begin{tikzcd}
0 \arrow[r] & A \arrow[r] & H \arrow[r] & B \arrow[r] & 0.
\end{tikzcd}
\end{equation}
Two extensions of $B$ by $A$ are said to be isomorphic if there exists a commutative diagram of the form
\begin{equation}
\begin{tikzcd}
0 \arrow[r] & A \arrow[r] \arrow[d,"\text{Id}"]&  H \arrow[r] \arrow[d,"\simeq"] & B \arrow[r] \arrow[d,"\text{Id}"] & 0 \\
0 \arrow[r] & A \arrow[r] & H' \arrow[r] & B \arrow[r] & 0
\end{tikzcd}.
\end{equation}
The extension \ref{eq:ExtensionAHB} is said to split if it is isomorphic to the trivial extension 
\begin{equation} \label{eq:TrivialExtensionAB}
\begin{tikzcd}
0 \arrow[r,"i"] & A \arrow[r] & A \oplus B \arrow[r,"j"] & B \arrow[r] & 0,
\end{tikzcd}
\end{equation}
where $i$ is the natural inclusion and $j$ is the the natural projection \cite{Carlson,PetersSteenbrink}.

\begin{definition} \label{DefnTateHodge}
The abelian category of mixed Hodge-Tate structures, denoted by $\textbf{MHT}_R$, is defined as the smallest full abelian subcategory of $\textbf{MHS}_R$ that contains the Hodge-Tate objects $R(n),n \in \mathbb{Z}$ while also being closed under extensions.	
\end{definition}

The set of isomorphism classes of extensions of $B$ by $A$, denoted by $\text{Ext}^1_{\textbf{MHS}_R}(B,A)$, has a group structure imposed by the Baer summation, while the zero object is the trivial extension \ref{eq:TrivialExtensionAB} \cite{PetersSteenbrink,Carlson}. Two $R$-MHS $A$ and $B$ are said to be separated if the highest weight of $A$ is lower than the lowest weight of $B$, in which case the extension \ref{eq:ExtensionAHB} is also said to be separated. When $A$ and $B$ are separated, there is a canonical and functorial description of the group $\text{Ext}^1_{\textbf{MHS}_R}(B,A)$ given by \cite{Carlson,PetersSteenbrink}
\begin{equation} \label{eq:ExtensionIso}
\text{Ext}^1_{\textbf{MHS}_R}(B,A)=\text{Hom}(B,A)_R \otimes_R \mathbb{C}/(F^0\, \text{Hom}(B,A)+\text{Hom}(B,A)_R).
\end{equation}  
In particular we have the following important lemma.
\begin{lemma}
When $n \geq 1$, $\mathbb{Q}(n)$ and $\mathbb{Q}(0)$ are separated and we have
\begin{equation} \label{eq:Isoq0q3}
\text{Ext}^1_{\textbf{MHS}_{\mathbb{Q}}}(\mathbb{Q}(0),\mathbb{Q}(n)) = \mathbb{C}/(2 \pi i)^n \,\mathbb{Q}
\end{equation}
\end{lemma}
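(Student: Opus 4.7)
The plan is to verify the hypothesis of the Carlson formula \ref{eq:ExtensionIso} and then compute each ingredient explicitly. The $\mathbb{Q}$-MHS $\mathbb{Q}(n)$ is pure of weight $-2n$ and $\mathbb{Q}(0)$ is pure of weight $0$; since $n\geq 1$ forces $-2n<0$, the highest weight of $\mathbb{Q}(n)$ lies strictly below the lowest weight of $\mathbb{Q}(0)$, so the two objects are separated in the sense recalled in Section \ref{subsec:ExtensionMHS}, and the isomorphism \ref{eq:ExtensionIso} applies.

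Next I would unwind the internal Hom $\text{Hom}(\mathbb{Q}(0),\mathbb{Q}(n))$ using Definition \ref{internalhom}. Its underlying $\mathbb{Q}$-module is $\text{Hom}_{\mathbb{Q}}(\mathbb{Q},(2\pi i)^n\mathbb{Q})\simeq (2\pi i)^n\mathbb{Q}$, and checking the recipes for the weight and Hodge filtrations shows the result is canonically isomorphic to $\mathbb{Q}(n)$ itself, as one expects since $\mathbb{Q}(0)$ is the unit for the tensor structure. Concretely, for $\phi\in F^p\,\text{Hom}(\mathbb{Q}(0),\mathbb{Q}(n))$ the only non-vacuous constraint is at $r=0$, namely $\phi(\mathbb{C})\subset F^p\mathbb{Q}(n)$, and since $F^p\mathbb{Q}(n)=(2\pi i)^n\mathbb{Q}\otimes_{\mathbb{Q}}\mathbb{C}$ for $p\leq -n$ while $F^p\mathbb{Q}(n)=0$ for $p\geq 1-n$, the assumption $n\geq 1$ yields $1-n\leq 0$ and hence
$$F^0\,\text{Hom}(\mathbb{Q}(0),\mathbb{Q}(n))=0.$$

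Substituting into \ref{eq:ExtensionIso} then gives
$$\text{Ext}^1_{\textbf{MHS}_{\mathbb{Q}}}(\mathbb{Q}(0),\mathbb{Q}(n))=\frac{(2\pi i)^n\mathbb{Q}\otimes_{\mathbb{Q}}\mathbb{C}}{0+(2\pi i)^n\mathbb{Q}}\;\simeq\;\mathbb{C}/(2\pi i)^n\mathbb{Q},$$
where the last isomorphism is the canonical identification $(2\pi i)^n q\otimes\lambda\mapsto (2\pi i)^n q\lambda$, under which the rational lattice $(2\pi i)^n\mathbb{Q}$ maps to itself inside $\mathbb{C}$.

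There is no real obstacle here: everything reduces to the bookkeeping in Definition \ref{internalhom} and formula \ref{eq:ExtensionIso}. The one point deserving care is tracking the Tate twist $(2\pi i)^n$ faithfully through the tensor product with $\mathbb{C}$, so that the denominator comes out as $(2\pi i)^n\mathbb{Q}$ rather than $\mathbb{Q}$; this is precisely the arithmetic normalisation that makes the coset of $\zeta(n)$, and not merely of $\zeta(n)/(2\pi i)^n$, the natural object in the downstream applications of the lemma in Sections \ref{ConstructionLMHS} and the following one.
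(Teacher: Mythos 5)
Your proposal is correct and follows essentially the same route as the paper's proof: verify separatedness from the weights $-2n<0$ versus $0$, compute $F^0\,\text{Hom}(\mathbb{Q}(0),\mathbb{Q}(n))=0$ from Definition \ref{internalhom} using $n\geq 1$, identify the rational lattice of the internal Hom with $(2\pi i)^n\mathbb{Q}$, and substitute into the Carlson formula \ref{eq:ExtensionIso}. Your write-up is somewhat more explicit about the filtration bookkeeping, but the argument is the same.
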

\begin{proof}
The weight of $\mathbb{Q}(n)$ is $-2\,n$ and the weight of $\mathbb{Q}(0)$ is 0, so they form a separated pair. The rational vector spaces of $\mathbb{Q}(0)$ and $\mathbb{Q}(n)$ are respectively
\begin{equation}
\mathbb{Q}(0): \mathbb{Q} \subset \mathbb{C},~~
\mathbb{Q}(n): (2 \pi i)^n\, \mathbb{Q}  \subset \mathbb{C} .\\
\end{equation} 
From Definition \ref{internalhom}, we have
\begin{equation}
F^0 \, \text{Hom}(B,A)=0.
\end{equation}
Now we choose an isomorphism
\begin{equation}
\text{Hom}(\mathbb{Q}(0),\mathbb{Q}(n)) \otimes_{\mathbb{Q}} \mathbb{C} \simeq \mathbb{C}
\end{equation}
such that
\begin{equation}
\text{Hom}(\mathbb{Q}(0),\mathbb{Q}(n))_{\mathbb{Q}} \simeq (2 \pi i)^n \,\mathbb{Q},
\end{equation}
then the formula \ref{eq:ExtensionIso} immediately implies the formula \ref{eq:Isoq0q3}.

\end{proof}

When $n \geq 1$, given an element $\bar{s}$ of $\mathbb{C}/(2 \pi i)^n\, \mathbb{Q}$, we now construct an extension $H$ such that its image in $\mathbb{C}/(2 \pi i)^n\, \mathbb{Q}$ is $\bar{s}$. The complex vector space $\mathbb{C}^2$ has a natural basis $\{e_j\}_{j=1}^2$
\begin{equation}
e_1=(1,0),~e_2=(0,1).
\end{equation}
Let the rational vector space of $H$ be
\begin{equation}
H_{\mathbb{Q}}:=\mathbb{Q}\,(2 \pi i)^n\, e_1+\mathbb{Q}\, e_2 \subset \mathbb{C}^2
\end{equation}
The weight filtration of $H$ is given by
\begin{equation}
\begin{aligned}
&W_{-2\,n-1}\,H=W_{-2\,n-2}\,H=\cdots=0, \\
&W_{-2\,n}=\cdots=W_{-1}=\mathbb{Q}\, (2 \pi i)^n\, e_1,\\
&W_0\, H=W_1\,H=\cdots=H_{\mathbb{Q}}.
\end{aligned}
\end{equation}
Let $s$ be an arbitrary complex number whose coset in $\mathbb{C}/(2 \pi i)^n\, \mathbb{Q}$ is $\bar{s}$, then the Hodge filtration of $H$ is given by
\begin{equation}
\begin{aligned}
&F^1=F^2=\cdots=0,\\
&F^0=\cdots =F^{-(n-1)}= \mathbb{C}\,(s\,e_1+e_2),\\
&F^{-n}=F^{-n-1}=\cdots=\mathbb{C}^2.
\end{aligned}
\end{equation}
Now $H$ defines a short exact sequence
\begin{equation}
\begin{tikzcd}
0 \arrow[r]& \mathbb{Q}(n) \arrow[r]&  H \arrow[r] & \mathbb{Q}(0) \arrow[r]& 0
\end{tikzcd}
\end{equation}
where the morphism from $\mathbb{Q}(n)$ to $H$ is the natural inclusion. Thus $H$ forms an extension of $\mathbb{Q}(0)$ by $\mathbb{Q}(n)$. From the proof of the formula \ref{eq:ExtensionIso} in \cite{Carlson, PetersSteenbrink}, the image of $H$ in $\mathbb{C}/(2 \pi i)^n\, \mathbb{Q}$ is $\bar{s}$. From the construction of $H$, we deduce that the extension defined by $H$ does not depend on the choice of $s$.

\section{Pure motives} \label{PureMotive}

This section is an overview of the construction of pure motives, however it is not meant to be complete and necessary references will be given for further reading. 

\subsection{Algebraic Cycles}

First, we need to give a brief introduction to algebraic cycles. An excellent reference to the theory of algebraic cycles is the book \cite{FultonI}, which is strongly recommended to the readers. Let $\textbf{SmProj}/k$ be the category of non-singular projective varieties over a field $k$, which is a symmetric monoidal category with product given by fiber product of varieties and symmetry given by the canonical isomorphism 
\begin{equation} \label{eq:ProjVarsymmetry}
X \times_k Y \rightarrow Y \times_k X.
\end{equation}
A prime cycle $Z$ of a non-singular projective variety $X$ is an irreducible algebraic subvariety, and its codimension is defined as $\text{dim}\,X-\text{dim}\,Z$. On the other hand, an irreducible closed subset of $X$ has a natural algebraic variety structure induced from that of $X$ \cite{Hartshorne}. The set of prime cycles of dimension $r$ (resp. codimension $r$) generates a free abelian group that will be denoted by $C_r(X)$ (resp. $C^r(X)$), and elements of $C_r(X)$ (resp. $C^r(X)$) will be called the algebraic cycles of dimension $r$ (resp. codimension $r$). Two prime cycles $Z_1$ and $Z_2$ are said to intersect with each other properly if
\begin{equation}
\text{codim}(Z_1 \cap Z_2) =\text{codim}(Z_1)+\text{codim}(Z_2),
\end{equation} 
where $Z_1 \cap Z_2$ means the set-theoretic intersection between $Z_1$ and $Z_2$. If two prime cycles $Z_1$ and $Z_2$ intersect with each other properly, the intersection product $Z_1 \cdot Z_2$ is defined as
\begin{equation}
Z_1 \cdot Z_2= \sum_T m(T;\,Z_1 \cdot Z_2)\, T,
\end{equation}
where the sum is over all irreducible components of $Z_1 \cap Z_2$ and $m(T;\,Z_1 \cdot Z_2)$ is Serre's intersection multiplicity formula \cite{FultonI}. Now extend the definition by linearity, the intersection product is defined for algebraic cycles $Z=\sum_j\,m_j\,Z_j$ and $W= \sum_l\, n_l\,W_l$ when $Z_j$ and $W_l$ intersect properly for all $j$ and $l$. Therefore there is a partially defined intersection product on algebraic cycles
\begin{equation}
\begin{tikzcd}
C^r(X) \times C^s(X) \arrow[r,dotted] &C^{r+s}(X).
\end{tikzcd}
\end{equation}
If $f:X \rightarrow Y$ is a morphism between two non-singular projective varieties $X$ and $Y$, the pushforward homomorphism $f_*$ on algebraic cycles is defined by
\begin{equation}
f_*(Z):=
\begin{cases*}
0 & if $\text{dim}\, f(Z) < \text{dim}\, Z$, \\
[k(Z):k(f(Z))] \cdot f(Z)        & if $\text{dim}\, f(Z) = \text{dim}\, Z$,
\end{cases*}
\end{equation}
where $Z$ is a prime cycle and $k(Z)$ (resp. $k(f(Z))$) is the function field of $Z$ (resp. $f(Z)$) \cite{Hartshorne}. Here $ [k(Z):k(f(Z))] $ is the degree of field extension. Now we want to define the pullback homomorphism $f^*$. Given a prime cycle $W$ of $Y$, the first attempt is to naively try 
\begin{equation} \label{eq:cyclepullback}
f^*(W):= \sum_{T \subset f^{-1}(Z)} \ell_{\mathcal{O}_{X,T}}(\mathcal{O}_{f^{-1}(Z),T}) \cdot T,
\end{equation}
where the sum is over the irreducible components of $f^{-1}(Z)$ and $\ell_{\mathcal{O}_{X,T}}(\mathcal{O}_{f^{-1}(Z),T})$ is the length of $\mathcal{O}_{f^{-1}(Z),T}$ in $\mathcal{O}_{X,T}$ \cite{FultonI}. However this definition is only partially defined and in general $f^*(W)$ does not make sense. The solution to the above problems is to find an equivalence relation $\sim$ on the algebraic cycles such that the quotient group $C^*(X)/\sim$ behaves very well.
\begin{definition}
An equivalence relation $\sim$ on the algebraic cycles is called an adequate equivalence relation if given two arbitrary cycles $Z_1$ and $Z_2$, there exists a cycle $Z'_1$ in the equivalence class of $Z_1$ such that $Z'_1$ intersects with $Z_2$ properly, while the equivalence class of the intersection $Z'_1 \cdot Z_2$ is independent of the choice of $Z'_1$.
\end{definition}

Hence for an adequate equivalence relation $\sim$, there is a well defined intersection product on the quotient group $C_{\sim}^*(X):=C^*(X)/\sim$
\begin{equation}
C^r(X)_{\sim} \times C^s(X)_{\sim}  \rightarrow  C^{r+s}(X)_{\sim},
\end{equation}
moreover, the pushforward and pullback homomorphisms are also well defined \cite{FultonI}
\begin{equation}
f_*:C_{r,\sim}(X) \rightarrow C_{r,\sim}(Y), ~~f^*:C^r_{\sim}(Y) \rightarrow C^r_{\sim}(X).
\end{equation}
The set of adequate equivalence relations is ordered in the way such that $\sim_1$ is said to be finer than $\sim_2$ if for every cycle $Z$, $Z \sim_1 0$ implies $Z \sim_2 0$. The most important adequate equivalence relations are the rational equivalence and the numerical equivalence. In fact, rational equivalence is the finest adequate equivalence relation and numerical equivalence is the coarsest adequate equivalence relation \cite{Andre, Scholl, FultonI}.

\subsection{Weil cohomology theory}

Now we will briefly discuss the Weil cohomology theory. Let $\text{Gr}^{\geq 0}\, \text{Vec}_K$ be the rigid tensor abelian category of finite dimensional graded vector spaces over a field $K$ with $\text{char}\,K=0$. An object $V$ of $\text{Gr}^{\geq 0}\, \text{Vec}_K$ has a decomposition of the form
\begin{equation}
V=\oplus_{r \geq 0} \, V_r,
\end{equation} 
where $V_r$ consists of homogeneous elements with degree $r$. Tensor product in $\text{Gr}^{\geq 0}\, \text{Vec}_K$ will be denoted by $\otimes_K$. The category $\text{Gr}^{\geq 0}\, \text{Vec}_K$ admits a graded symmetry defined by
\begin{equation}
v \otimes_K w \rightarrow (-1)^{\text{deg}\,v\, \text{deg}\, w}\, w\otimes_K v,
\end{equation}
where both $v$ and $w$ are homogeneous elements. On the other hand, the category $\textbf{SmProj}/k$ also admits product and symmetry operation, and a Weil cohomology theory is a symmetric monoidal functor 
\begin{equation}
H^*:\textbf{SmProj}/k^{\text{op}} \rightarrow \text{Gr}^{\geq 0}\, \text{Vec}_K 
\end{equation}
that satisfies a list of axioms \cite{MilneEC}. Among these axioms is the existence of a cycle map
\begin{equation}
\text{cl}:C^*_{\text{rat}}(X)_{\mathbb{Q}} \rightarrow H^*(X),
\end{equation}
which doubles the degree and sends the intersection product of cycles to the cup product of cohomology classes. We now introduce three classical examples of Weil cohomology theories. Let $X$ be a non-singular projective variety of dimension $n$ over a field $k$.
\begin{enumerate}
	\item If $\sigma:k \rightarrow \mathbb{C}$ is an embedding of $k$ into $\mathbb{C}$, the $\mathbb{C}$-valued points of $X$, denoted by $X_{\sigma}(\mathbb{C})$, form a projective complex manifold. The Betti cohomology $H_{B,\sigma}^*(X)$ is defined as the singular cohomology of $X_{\sigma}(\mathbb{C})$ with coefficient ring $\mathbb{Q}$
	\begin{equation}
	H_{B,\sigma}^*(X):=H^*(X_{\sigma}(\mathbb{C}),\mathbb{Q}).
	\end{equation}
	Since $X_{\sigma}(\mathbb{C})$ is projective, there exists a Hodge decomposition
	\begin{equation}
H_{B,\sigma}^m(X) \otimes_{\mathbb{Q}} \mathbb{C}=\oplus_{p+q=m}\,H^{p,q}(X_{\sigma}(\mathbb{C})),
	\end{equation}
	which induces a decreasing filtration of $H_{B,\sigma}^m(X) \otimes_{\mathbb{Q}} \mathbb{C}$
	\begin{equation}
	F^l(H_{B,\sigma}^m(X) \otimes_{\mathbb{Q}} \mathbb{C}):=\oplus_{p \geq l} H^{p,m-p}(X_{\sigma}(\mathbb{C})).
	\end{equation}
	
	\item If $\text{char}\,k=0$, take $K$ to be $k$. Let $\Omega_{X/k}^*$ be the complex of sheaves of algebraic forms on $X$
	\begin{equation}
	\Omega_{X/k}^*:0 \rightarrow \mathcal{O}_{X/k} \xrightarrow{d}\Omega_{X/k}^1 \xrightarrow{d} \Omega^2_{X/k} \xrightarrow{d} \cdots \xrightarrow{d} \Omega^n_{X/k} \rightarrow 0.
	\end{equation}
	The algebraic de Rham cohomology of $X$, denoted by $H^*_{\text{dR}}(X)$, is the hypercohomology of the complex $\Omega_{X/k}^*$
	\begin{equation}
	H^*_{\text{dR}}(X):=\mathbb{H}^*(X, \Omega_{X/k}^*),
	\end{equation}
	which is a vector space over $k$. The complex $\Omega^*_{X/k}$ admits a naive filtration
	\begin{equation}
	F^p \,\Omega^*_{X/k}:0 \xrightarrow{d} 0 \cdots \xrightarrow{d} 0 \xrightarrow{d} \Omega^p_{X/k} \xrightarrow{d} \cdots \xrightarrow{d} \Omega^n_{X/k} \rightarrow 0,
	\end{equation}
	which induces a decreasing filtration of $H^m_{\text{dR}}(X)$ given by
	\begin{equation}
		F^pH^m_{\text{dR}}(X):=\text{Im}\,\big(\mathbb{H}^m(X,F^p \,\Omega^*_{X/k}) \rightarrow \mathbb{H}^m(X,\Omega^*_{X/k})\big).
	\end{equation}

	\item Suppose $\ell$ is a prime number and $\ell \neq \text{char}\,k$. Let $K$ be $\mathbb{Q}_{\ell}$ and the \'etale cohomology of $X$ is defined as
	\begin{equation}
	H^*_{\text{\'et}}(X)_{\ell}:=\lim_{\xleftarrow[n]{}} H^*_{\text{\'et}}(X \times_k k^{\text{sep}},\mathbb{Z}/{\ell}^n) \otimes_{\mathbb{Z}_{\ell} } \mathbb{Q}_{\ell},
	\end{equation}
	which is a finite dimensional continuous representation of the absolute Galois group $\text{Gal}(k^{\text{sep}}/k)$  \cite{MilneEC}.
\end{enumerate}

These three classical examples are not totally independent from each other, and there exist canonical comparison isomorphisms between them:
\begin{enumerate}
	\item The standard comparison isomorphism between the Betti cohomology and the algebraic de Rham cohomology
	\begin{equation} \label{eq:BettiDeRham}
	I_{\sigma}:H^m_{B,\sigma}(X) \otimes_{\mathbb{Q}} \mathbb{C} \simeq H^m_{\text{dR}}(X) \otimes_{k,\sigma} \mathbb{C},
	\end{equation}
	under which $F^p(H^m_{B,\sigma}(X) \otimes_{\mathbb{Q}} \mathbb{C})$ is sent to $F^p(H^m_{\text{dR}}(X))\otimes_{k,\sigma} \mathbb{C}$.
	\item The standard comparison isomorphism between the Betti cohomology and the  \'etale cohomology
	\begin{equation}
	I_{\ell,\bar{\sigma}}:H^m_{B,\sigma}(X) \otimes_{\mathbb{Q}} \mathbb{Q}_{\ell} \simeq H^m_{\text{\'et}}(X)_{\ell},
	\end{equation}
	which depends on the choice of an extension of $\sigma$ to $\bar{\sigma}:k^{\text{sep}} \rightarrow \mathbb{C}$. 
\end{enumerate}

Given an adequate equivalence relation $\sim$, $C_{\sim}^*(X)_{\mathbb{Q}}$ is defined as
\begin{equation}
C_{\sim}^*(X)_{\mathbb{Q}}:=C_{\sim}^*(X) \otimes_{\mathbb{Z}} \mathbb{Q}.
\end{equation}
A Weil cohomology theory $H^*$ yields an adequate equivalence relation $\sim_{H^*}$ defined by \cite{Andre, Scholl}
\begin{equation}
Z \sim_{H^*}0  \Leftrightarrow \text{cl}(Z)=0.
\end{equation}
Equivalences like $\sim_{H^*}$ are often expected to be independent of the specific cohomology theory.
\subsection{Pure Motives}

The three examples of classical Weil cohomology theories we give above behave as if they all arise from an algebraically defined cohomology theory over $\mathbb{Q}$, however this is known to be false \cite{Andre, Scholl}. Grothendieck's idea to explain this phenomenon is that there exists a universal cohomology theory in the sense that all Weil cohomology theories are the realisations of it. More precisely, Grothendieck conjectures that there exists a rigid tensor abelian category $\textbf{M}_{\text{hom}}$ over $\mathbb{Q}$ and a functor $M_{gm}$
\begin{equation}
M_{gm}: \textbf{SmProj}/k^{\text{op}} \rightarrow \textbf{M}_{\text{hom}}
\end{equation} 
such that for every Weil cohomology theory $H^*$, there exists a functor $H^*_m$ that factors through $M_{gm}$
\begin{equation}
\begin{tikzcd}
\textbf{SmProj}/k^{\text{op}} \arrow[rd,"H^*"'] \arrow[r, "M_{gm}"] & \textbf{M}_{\text{hom}} \arrow[d,dotted,"H^*_m"] \\
& \text{Gr}^{\geq 0}\, \text{Vec}_K 
\end{tikzcd}.
\end{equation}

Now we will introduce the construction of the category of motives $\textbf{M}_{\sim}$ where $\sim$ is the rational equivalence or the numerical equivalence \cite{Andre, Scholl}. Given two non-singular projective varieties $X$ and $Y$, the group of correspondences from $X$ to $Y$ with degree $r$ is defined as
\begin{equation}
\text{Corr}^r(X,Y):=C^{\text{dim}\,X+r}(X \times Y).
\end{equation}
The composition of correspondences 
\begin{equation}
\text{Corr}^r(X,Y) \times \text{Corr}^s(Y,Z) \rightarrow \text{Corr}^{r+s}(X,Z)
\end{equation}
is defined by  
\begin{equation}
g \times h \rightarrow h \circ g:=(p_{13})_*\big((p_{12})^*g\cdot (p_{23})^*h\big),
\end{equation}
where $p_{12}$ is the natural projection morphism from $X \times Y \times Z$ to $X \times Y$, etc \cite{Scholl}. For a morphism $f:Y \rightarrow X$, its graph $\Gamma_f$ in $X \times Y$ is an algebraic variety that is isomorphic to $Y$, therefore $\Gamma_f$ is an element of $\text{Corr}^0(X,Y)$ \cite{Hartshorne}. A correspondence of $\text{Corr}^0(X,Y)$ can be seen as a multi-valued morphism from $Y$ to $X$. A correspondence $\gamma$ defines a homomorphism from $H^*(X)$ to $H^*(Y)$ by
\begin{equation} \label{eq:cyclemap}
\gamma_*: x \mapsto p_{2,*}\,(\,p_1^*\, x \, \cup \, \text{cl}(\gamma)),
\end{equation}
where $p_1$ (resp. $p_2$) is the projection morphism from $X \times Y$ to $X$ (resp. $Y$). While the homomorphism $(\Gamma_f)_*$ induced by $\Gamma_f$ is just the pullback homomorphism $f^*$. The category $\textbf{M}_{\sim}$ is constructed in three steps \cite{Andre, Scholl}.

\begin{enumerate}
	\item Construct a category whose objects are formal symbols
	\begin{equation}
	\{M_{gm}(X):X \in \textbf{SmProj}/k \}.
	\end{equation}
	The morphisms between two objects are given by
	\begin{equation}
	\text{Hom}(M_{gm}(X),M_{gm}(Y)):=\text{Corr}^0_{\sim}(X,Y)_{\mathbb{Q}},
	\end{equation}  
	where we have
	\begin{equation}
	\text{Corr}^r_{\sim}(X,Y)=\text{Corr}^r(X,Y)/\sim,\,\text{Corr}^r_{\sim}(X,Y)_{\mathbb{Q}}=\text{Corr}^r_{\sim}(X,Y) \otimes_{\mathbb{Z}} \mathbb{Q}.
	\end{equation}
	This category can be seen as the linearisation of $\textbf{SmProj}/k^{\text{op}}$.
	
	\item Take the pseudo-abelianisation of the category constructed in Step 1 and denote this new category by $\textbf{M}^{\text{eff}}_{\sim}$. More explicitly, the objects of $\textbf{M}^{\text{eff}}_{\sim}$ are formally
	\begin{equation}
	\{(M_{gm}(X),e):X \in \textbf{SmProj}/k\,\text{and}\,\, e \in \text{Corr}^0_{\sim}(X,X)_{\mathbb{Q}} , \,e^2=e\},
	\end{equation}
	 and the morphisms between two objects are given by
	\begin{equation}
	\text{Hom}((M_{gm}(X),e),(M_{gm}(Y),f)):=f \circ \text{Corr}^0_{\sim}(X,Y)_{\mathbb{Q}} \circ e .
	\end{equation}
    Let the graph of the identity morphism of $\mathbb{P}^1$ be $\Delta_{\mathbb{P}^1}$, and in this category the object $(M_{gm}(\mathbb{P}^1),\Delta_{\mathbb{P}^1})$ has a decomposition given by \cite{Andre, Scholl}
	\begin{equation}
	(M_{gm}(\mathbb{P}^1),\Delta_{\mathbb{P}^1})=M_{gm}^0(\mathbb{P}^1) \oplus M_{gm}^2(\mathbb{P}^1).
	\end{equation}
	The component $M_{gm}^0(\mathbb{P}^1)$ is also denoted by $\mathbb{Q}(0)$, while the component $M_{gm}^2(\mathbb{P}^1)$ is also denoted by $\mathbb{Q}(-1)$.
	
	\item  The category $\textbf{M}_{\sim}$ is constructed from $\textbf{M}^{\text{eff}}_{\sim}$ by inverting the object $\mathbb{Q}(-1)$. The objects of $\textbf{M}_{\sim}$ are formally
	\begin{equation}
	\{(M_{gm}(X),e,m):X \in \textbf{SmProj}/k,\,\, e \in \text{Corr}^0_{\sim}(X,X)_{\mathbb{Q}} , \,e^2=e,\,\,\text{and}\,\,m \in \mathbb{Z}\},
	\end{equation}
	and the morphisms between two objects are given by
	\begin{equation}
	\text{Hom}((M_{gm}(X),e,m),(M_{gm}(Y),f,n)):=f \circ \text{Corr}^{n-m}_{\sim}(X,Y)_{\mathbb{Q}} \circ e.
	\end{equation}
	The category $\textbf{M}^{\text{eff}}_{\sim}$ is isomorphic to the full subcategory of $\textbf{M}_{\sim}$ generated by objects of the form $(M_{gm}(X),e,0)$.
\end{enumerate}
Given two objects of $\textbf{M}_{\sim}$, the morphisms between them form a rational vector space, which is finite dimensional if $\sim$ is the numerical equivalence. Direct sum in $\textbf{M}_{\sim}$ is essentially defined by \cite{Scholl}
\begin{equation}
(M_{gm}(X),e,m) \oplus (M_{gm}(Y),f,m):=(M_{gm}(X \amalg Y),e \oplus f,m),
\end{equation}
while tensor product in $\textbf{M}_{\sim}$ is essentially defined by
\begin{equation}
(M_{gm}(X),e,m) \otimes (M_{gm}(Y),f,n):=(M_{gm}(X \times Y),e \times f,m+n),
\end{equation}
Dual operation in $\textbf{M}_{\sim}$ is defined by
\begin{equation}
(M_{gm}(X),e,m)^{\vee}:=(M_{gm}(X),e^t,\text{dim} \, X-m)
\end{equation}
where $e^t$ means the transpose of $e$. In fact, the object $\mathbb{Q}(0)$ is a unit of $\textbf{M}_{\sim}$ \cite{Andre, Scholl}. From the construction of $\textbf{M}_{\sim}$, there is a functor
\begin{equation}
M_{gm}:\textbf{SmProj}/k^{\text{op}} \rightarrow \textbf{M}_{\sim},
\end{equation}
which sends $X$ to $(M_{gm}(X),\Delta_X,0)$ and $f: Y \rightarrow X$ to $\Gamma_f$. Hence we deduce that every Weil cohomology theory $H^*$ automatically factors through $\textbf{M}_{\text{rat}}$
\begin{equation}
\begin{tikzcd}
\textbf{SmProj}/k^{\text{op}} \arrow[rd,"H^*"'] \arrow[r, "M_{gm}"] & \textbf{M}_{\text{rat}} \arrow[d,dotted,"H^*_{\text{rat}}"] \\
& \text{Gr}^{\geq 0}\, \text{Vec}_K 
\end{tikzcd}.
\end{equation}
However the category $\textbf{M}_{\text{rat}}$ is not abelian \cite{Andre, Scholl}. On the other hand, the category $\textbf{M}_{\text{num}}$ has been proved to be abelian and semi-simple \cite{Jannsen,Andre, Scholl}, but it is not known whether an arbitrary Weil cohomology theory $H^*$ will factor through it. The problem is that, if an algebraic cycle $\gamma$ is numerically equivalent to 0, then it will define a zero morphism in $\textbf{M}_{\text{num}}$. So in order for $H^*$ to factor through $\textbf{M}_{\text{num}}$, we will need the induced homomorphism $\gamma_*$ in the formula \ref{eq:cyclemap} to be zero. However, this is not known currently, but it is conjectured to be true by Grothendieck.

\textbf{Conjecture D} \textit{If an algebraic cycle $\gamma$ is numerically equivalent to 0, then $\text{cl}(\gamma)$ is zero for every Weil cohomology theory.}

This conjecture also implies that the homological equivalence relation $\sim_{H^*}$ defined by a Weil cohomology theory $H^*$ is the same as the numerical equivalence.

\subsection{The conjectured abelian category of the mixed motives}
The theory of the pure motives can be seen as the universal Weil cohomology theory for non-singular projective varieties over $k$, so one might wonder what is the universal (Bloch-Ogus) cohomology theory for the arbitrary varieties over $k$. Beilinson conjectured that there exists a rigid tensor abelian category of mixed motives, denoted by $\textbf{MM}_k$, that has similar properties as $\textbf{MHS}_{\mathbb{Q}}$ which forms the universal Bloch-Ogus cohomology theory for arbitrary varieties over $k$ \cite{LevineMM}. Here we list several expected properties of the conjectured abelian category $\textbf{MM}_k$.

\begin{enumerate}
	\item $\textbf{MM}_k$ is a rigid tensor abelian category such that the morphisms between two objects form a vector space over $\mathbb{Q}$. It contains the Tate objects $\mathbb{Q}(n),n \in \mathbb{Z}$ which satisfy
	\begin{equation}
	\mathbb{Q}(m) \otimes \mathbb{Q}(n)=\mathbb{Q}(m+n),
	\end{equation}
	while $\mathbb{Q}(0)$ is a unit object of $\textbf{MM}_k$. 
	
	\item There exists a contravariant functor $M_{gm}$ from the category of varieties over $k$ to the derived category of $\textbf{MM}_k$
	\begin{equation}
	M_{gm}:\textbf{Var}/k^{\text{op}} \rightarrow D^b( \textbf{MM}_k).
	\end{equation} 	
	
	\item The full subcategory of $\textbf{MM}_k$ generated by the semi-simple objects is equivalent to the category of pure motives.
	
	\item For every object $\mathcal{M}$ of $\textbf{MM}_k$, there exists a finite weight filtration $W_*(\mathcal{M})$ such that all the graded pieces $\text{Gr}_{l}^W(\mathcal{M})$ are pure motives. 
	
	\item If $\sigma:k \rightarrow \mathbb{C}$ is an embedding, there exists a Hodge realisation functor
	\begin{equation}
	\mathfrak{R}_{\sigma}: \textbf{MM}_k \rightarrow \textbf{MHS}_{\mathbb{Q}},
	\end{equation}
	which is compatible with all the structures of $\textbf{MM}_k$ and $\textbf{MHS}_{\mathbb{Q}}$. For every variety $X$ over $k$, $\mathfrak{R}_{\sigma}(H^q(M_{gm}(X)))$ is the $q$-th Betti cohomology $H_{B,\sigma}^q(X)$ together with the (natural) rational MHS on it \cite{DeligneIII}.	
	
	\item The abelian subcategory of the mixed Tate motives $\textbf{TM}_{k}$ is the smallest full abelian subcategory of $\textbf{MM}_k$ that contains the Tate objects $\mathbb{Q}(n),n\in \mathbb{Z}$ while also being closed under extension.

\end{enumerate}

The construction of an abelian category $\textbf{MM}_k$ that possesses all the expected properties is still beyond reach. However now there are several constructions of a triangulated tensor category that satisfy nearly all the expected properties of the derived category of $\textbf{MM}_k$, except those properties that need the triangulated category to be realised as the derived category of an abelian category, like a motivic $t$-structure. One notable example is Voevodsky's construction of $\textbf{DM}(k,\mathbb{Q})$ \cite{VoevodskyMM,VoevodskyCycles}.

\section{Mixed Hodge complexes}

In this section, we will give a routine proof to a well-known result, Proposition \ref{MTHequivalence}, as we can not find a precise reference. Let $D_{\textbf{MHS}_{\mathbb{Q}}}^b$ be the bounded derived category of rational mixed Hodge complexes, the definition of which is left to the paper \cite{BeilinsonMHC}.
\begin{definition}
Let $D_{\textbf{MHT}_{\mathbb{Q}}}^b$ be the full subcategory of $D_{\textbf{MHS}_{\mathbb{Q}}}^b$ that consists of $\mathbb{Q}$-mixed Hodge complexes whose cohomologies are mixed Hodge-Tate objects
\begin{equation}
D_{\textbf{MHT}_{\mathbb{Q}}}^b:=\{F^{\bullet} \in D_{\textbf{MHS}_{\mathbb{Q}}}^b : \underline{H}^q(F^{\bullet}) \in  \textbf{MHT}_{\mathbb{Q}},~\forall~q\in\mathbb{Z} \}.
\end{equation}
\end{definition}
The restriction of the cohomology functor $\underline{H}^*$ to $D_{\textbf{MHT}_{\mathbb{Q}}}^b$ is of the form
\begin{equation} \label{eq:HbarDTH}
\underline{H}^*:D_{\textbf{MHT}_{\mathbb{Q}}}^b \rightarrow \textbf{MHT}_{\mathbb{Q}}.
\end{equation}
Now we will prove $D_{\textbf{MHT}_{\mathbb{Q}}}^b$ is actually a full triangulated subcategory of $D_{\textbf{MHS}_{\mathbb{Q}}}^b$, but first we need the following lemma.
\begin{lemma} \label{THsubquotient}
The category $\textbf{MHT}_{\mathbb{Q}}$ of mixed Hodge-Tate objects is closed under taking sub-quotient object.
\end{lemma}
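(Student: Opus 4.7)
The plan is to prove the lemma via the following characterisation of mixed Hodge-Tate objects: an object $H \in \textbf{MHS}_{\mathbb{Q}}$ lies in $\textbf{MHT}_{\mathbb{Q}}$ if and only if $\text{Gr}^W_{2n+1}(H) = 0$ for all $n \in \mathbb{Z}$ and each even graded piece $\text{Gr}^W_{2n}(H)$ is isomorphic to a finite direct sum of copies of $\mathbb{Q}(-n)$. Once this characterisation is established, the lemma follows easily from the strictness of morphisms of mixed Hodge structures with respect to the weight filtration.

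First I would verify the characterisation. Let $\mathcal{C} \subset \textbf{MHS}_{\mathbb{Q}}$ denote the full subcategory of MHS whose weight-graded pieces satisfy the condition above. Clearly $\mathcal{C}$ contains every Tate object $\mathbb{Q}(n)$. A short exact sequence $0 \to A \to B \to C \to 0$ in $\textbf{MHS}_{\mathbb{Q}}$ induces, by the exactness of $\text{Gr}^W_\ell$, short exact sequences on each weight-graded piece; hence if $A, C \in \mathcal{C}$ then $B \in \mathcal{C}$, since the extension of a direct sum of copies of $\mathbb{Q}(-n)$ by another such direct sum in the semisimple category of pure Hodge structures of weight $2n$ is again of the same form. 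Thus $\mathcal{C}$ is closed under extensions and contains the Tate objects, so by Definition \ref{DefnTateHodge} we have $\textbf{MHT}_{\mathbb{Q}} \subseteq \mathcal{C}$. Conversely, any $H \in \mathcal{C}$ admits a finite filtration by $W_*$ whose graded pieces are direct sums of Tate objects, so an induction on the length of the weight filtration, using that $\textbf{MHT}_{\mathbb{Q}}$ is closed under extensions, shows $H \in \textbf{MHT}_{\mathbb{Q}}$.

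Next I would deduce closure under sub-quotient. Let $H \in \textbf{MHT}_{\mathbb{Q}}$ and let $H' \hookrightarrow H$ be a subobject in $\textbf{MHS}_{\mathbb{Q}}$. By the strictness of morphisms of MHS with respect to the weight filtration (a foundational property established by Deligne, see \cite{PetersSteenbrink}), the induced map $\text{Gr}^W_\ell(H') \to \text{Gr}^W_\ell(H)$ is injective for every $\ell$. Since the category of pure Hodge structures of weight $\ell$ is semisimple, any sub-pure-Hodge-structure of a direct sum of copies of $\mathbb{Q}(-n)$ (with $\ell = 2n$) is itself a direct sum of copies of $\mathbb{Q}(-n)$, and any sub-pure-Hodge-structure of the zero object (when $\ell$ is odd) is zero. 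Therefore $\text{Gr}^W_\ell(H')$ satisfies the criterion of the characterisation, so $H' \in \textbf{MHT}_{\mathbb{Q}}$. The argument for the quotient $H/H'$ is identical, using surjectivity of $\text{Gr}^W_\ell(H) \to \text{Gr}^W_\ell(H/H')$ in place of injectivity.

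There is essentially no obstacle; the only non-trivial inputs are the strictness of MHS morphisms with respect to $W_*$ and the semisimplicity of pure Hodge structures of fixed weight, both of which are standard and already implicitly used elsewhere in the paper. The proof is short enough that a two-line argument invoking these facts would suffice in the final write-up.
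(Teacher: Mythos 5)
Your proof is correct, but it takes a genuinely different route from the paper's. The paper argues by induction on $\dim A$: given a subobject $A$ of a mixed Hodge--Tate object $B$, it filters $B$ with one-dimensional Tate quotients, observes that the first graded quotient that $A$ hits yields a nonzero --- hence surjective, since $\mathbb{Q}(n_j)$ is simple --- morphism $A \to \mathbb{Q}(n_j)$, and applies the inductive hypothesis to the kernel; the quotient case is then dispatched using that $\textbf{MHT}_{\mathbb{Q}}$ is abelian. You instead first establish the graded-pieces characterisation of $\textbf{MHT}_{\mathbb{Q}}$ (odd weight-graded pieces vanish, $\text{Gr}^W_{2n}$ is a finite sum of copies of $\mathbb{Q}(-n)$) and then reduce the whole lemma to strictness of MHS morphisms for $W_*$. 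Your route is slightly longer but buys a reusable criterion that the paper only states informally (``semi-simplification is a direct sum of Tate objects''), and it concentrates the Hodge-theoretic input in a single appeal to strictness rather than a dimension induction. Two small points to tidy up in the write-up: (i) the blanket claim that the category of pure Hodge structures of a fixed weight is semisimple is false without polarizability, but you do not actually need it --- a rational sub- or quotient space of a pure Hodge structure concentrated in type $(n,n)$ is automatically of type $(n,n)$, which is all the argument uses; (ii) since Definition \ref{DefnTateHodge} takes the smallest full \emph{abelian} subcategory closed under extensions, you should record that your category $\mathcal{C}$ is closed under subquotients (which your second paragraph in effect proves) \emph{before} invoking minimality to conclude $\textbf{MHT}_{\mathbb{Q}} \subseteq \mathcal{C}$; as written the order is mildly circular, though trivially repaired by reordering.
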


\begin{proof}
Suppose $B$ is an object of $\textbf{MHT}_{\mathbb{Q}}$ and $A$ is a sub-object of $B$ in the bigger category $\textbf{MHS}_{\mathbb{Q}}$, i.e.
\begin{equation}
A \subset B.
\end{equation}
First we will prove that $A$ is also an object of $\textbf{MHT}_{\mathbb{Q}}$. As $B$ is a mixed Hodge-Tate object, it has a finite filtration given by objects $\{B_i\}_{i=0}^N$ of $\textbf{MHT}_{\mathbb{Q}}$
\begin{equation}
0=B_0 \subsetneqq B_1 \subsetneqq \cdots \subsetneqq B_N=B
\end{equation}
such that the quotients are all pure Hodge-Tate objects
\begin{equation}
B_i/B_{i+1} \simeq \mathbb{Q}(n_i),~n_i \in \mathbb{Z}.
\end{equation}
Let $j$ be the integer such that $A \subset B_{j}$, but $A \not\subset B_{j-1}$, then there exists a nonzero surjective morphism $f_j$ from $A$ to $B_j/B_{j-1} \simeq \mathbb{Q}(n_j)$ which induces a short exact sequence in $\textbf{MHS}_{\mathbb{Q}}$
\begin{equation}
\begin{tikzcd}
0 \arrow[r] & \text{ker}\,f_{j} \arrow[r] & A \arrow[r,"f_j"] & \mathbb{Q}(n_j) \arrow[r]& 0
\end{tikzcd}
\end{equation}
To show $A$ is an object of $\textbf{MHT}_{\mathbb{Q}}$, we only need to show $\text{ker}\,f_{j}$ is an object of $\textbf{MHT}_{\mathbb{Q}}$, which is done by an easy induction on the dimension of $A$. The subquotient case follows easily from the above proof and the fact that $\textbf{MHT}_{\mathbb{Q}}$ is an abelian category.
\end{proof}

\begin{proposition}
$D_{\textbf{MHT}_{\mathbb{Q}}}^b$ is a triangulated subcategory of $D_{\textbf{MHS}_{\mathbb{Q}}}^b$.
\end{proposition}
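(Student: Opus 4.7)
The plan is to verify the three standard closure conditions for a full triangulated subcategory: closure under the shift functor, containment of a zero object, and the two-out-of-three property for distinguished triangles. The first two are essentially tautological. If $F^{\bullet} \in D_{\textbf{MHT}_{\mathbb{Q}}}^b$, then $\underline{H}^q(F^{\bullet}[n]) = \underline{H}^{q+n}(F^{\bullet})$ is a mixed Hodge-Tate object for every $q$, so $F^{\bullet}[n] \in D_{\textbf{MHT}_{\mathbb{Q}}}^b$ for all $n$; and the zero complex lies in $D_{\textbf{MHT}_{\mathbb{Q}}}^b$ since its cohomology vanishes (the zero object of $\textbf{MHS}_{\mathbb{Q}}$ trivially belongs to $\textbf{MHT}_{\mathbb{Q}}$).

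The main content lies in the two-out-of-three property. Suppose
\begin{equation*}
A^{\bullet} \to B^{\bullet} \to C^{\bullet} \to A^{\bullet}[1]
\end{equation*}
is a distinguished triangle in $D_{\textbf{MHS}_{\mathbb{Q}}}^b$ with two of the three vertices in $D_{\textbf{MHT}_{\mathbb{Q}}}^b$. Applying the cohomological functor $\underline{H}^*$ yields the long exact sequence
\begin{equation*}
\cdots \to \underline{H}^q(A^{\bullet}) \to \underline{H}^q(B^{\bullet}) \to \underline{H}^q(C^{\bullet}) \to \underline{H}^{q+1}(A^{\bullet}) \to \cdots
\end{equation*}
in $\textbf{MHS}_{\mathbb{Q}}$. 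To conclude it suffices to show that the remaining cohomology is a mixed Hodge-Tate object for every $q$.

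Consider first the case where $A^{\bullet}, B^{\bullet} \in D_{\textbf{MHT}_{\mathbb{Q}}}^b$. Then for each $q$, the object $\underline{H}^q(C^{\bullet})$ fits into a short exact sequence
\begin{equation*}
0 \to \mathrm{coker}\bigl(\underline{H}^q(A^{\bullet}) \to \underline{H}^q(B^{\bullet})\bigr) \to \underline{H}^q(C^{\bullet}) \to \ker\bigl(\underline{H}^{q+1}(A^{\bullet}) \to \underline{H}^{q+1}(B^{\bullet})\bigr) \to 0.
\end{equation*}
The outer terms are subquotients of $\underline{H}^q(B^{\bullet})$ and $\underline{H}^{q+1}(A^{\bullet})$, both of which lie in $\textbf{MHT}_{\mathbb{Q}}$, so by Lemma \ref{THsubquotient} they lie in $\textbf{MHT}_{\mathbb{Q}}$ as well. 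Since $\textbf{MHT}_{\mathbb{Q}}$ is closed under extensions by Definition \ref{DefnTateHodge}, we conclude $\underline{H}^q(C^{\bullet}) \in \textbf{MHT}_{\mathbb{Q}}$, hence $C^{\bullet} \in D_{\textbf{MHT}_{\mathbb{Q}}}^b$. The remaining two cases (where $B^{\bullet}, C^{\bullet}$ or $A^{\bullet}, C^{\bullet}$ are given to lie in $D_{\textbf{MHT}_{\mathbb{Q}}}^b$) are handled by the identical argument after rotating the triangle, which preserves distinguished triangles. The only non-formal input is the combination of Lemma \ref{THsubquotient} with the extension-closure of $\textbf{MHT}_{\mathbb{Q}}$, so no genuine obstacle arises; the proposition is essentially an unwinding of definitions.
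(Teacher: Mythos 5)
Your proof is correct and follows essentially the same route as the paper's: apply the cohomological functor $\underline{H}^*$ to the distinguished triangle and deduce from Lemma \ref{THsubquotient} that the cohomology of the third vertex is mixed Hodge--Tate. You are in fact slightly more careful than the paper, which omits the explicit short exact sequence and the appeal to extension-closure of $\textbf{MHT}_{\mathbb{Q}}$ that your argument correctly makes visible.
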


\begin{proof}
Given a morphism $f: A^{\bullet} \rightarrow B^{\bullet} $ in $D_{\textbf{MHS}_{\mathbb{Q}}}^b$, its distinguished triangle is of the form \cite{BeilinsonMHC}
\begin{equation}
A^{\bullet} \rightarrow B^{\bullet} \rightarrow \text{Cone}\,f \rightarrow A^{\bullet}[1],
\end{equation}
which induces a long exact sequence in $\textbf{MHS}_{\mathbb{Q}}$
\begin{equation} \label{eq:longexactsequenceMHC}
\cdots \rightarrow \underline{H}^q (A^{\bullet}) \rightarrow \underline{H}^q (B^{\bullet}) \rightarrow \underline{H}^q (\text{Cone}\,f) \rightarrow \underline{H}^{q+1} (A^{\bullet}) \rightarrow \cdots.
\end{equation}
If both $A^{\bullet}$ and $B^{\bullet}$ are objects of $D_{\textbf{MHT}_{\mathbb{Q}}}^b$, then for every $q \in \mathbb{Z}$ we have
\begin{equation}
\underline{H}^q (B^{\bullet}) \in \textbf{MHT}_{\mathbb{Q}},~\underline{H}^{q+1} (A^{\bullet}) \in \textbf{MHT}_{\mathbb{Q}}.
\end{equation}
Lemma \ref{THsubquotient} immediately implies that $\underline{H}^q (\text{Cone}\,f)$ is an object of $\textbf{MHT}_{\mathbb{Q}}$, hence $\text{Cone}\,f$ is an object of $D_{\textbf{MHT}_{\mathbb{Q}}}^b$, thus this proposition follows immediately.

\end{proof}

For an object $A^{\bullet}$ of $D_{\textbf{MHT}_{\mathbb{Q}}}^b$, the truncated complex $\tau_{\leq i}(A^{\bullet})$ is also an object of $D_{\textbf{MHT}_{\mathbb{Q}}}^b$. From Lemma 3.5 of \cite{BeilinsonMHC}, the functor $\underline{H}^*$ in \ref{eq:HbarDTH} is the cohomological functor of a non-degenerated $t$-structure on $D_{\textbf{MHT}_{\mathbb{Q}}}^b$, which is essentially the restriction of the $t$-structure of $D_{\textbf{MHS}_{\mathbb{Q}}}^b$. The inclusion $\textbf{MHT}_{\mathbb{Q}} \hookrightarrow D_{\textbf{MHT}_{\mathbb{Q}}}^b$ is an equivalence between $\textbf{MHT}_{\mathbb{Q}}$ and the heart of this $t$-structure on $D_{\textbf{MHT}_{\mathbb{Q}}}^b$. In Section 3 of \cite{BeilinsonMHC}, Beilinson proves that for $A^{\bullet},B^{\bullet} \in \textbf{MHS}_{\mathbb{Q}}$
\begin{equation}
\text{Hom}^i_{D^b(\textbf{MHS}_{\mathbb{Q}})}(A^{\bullet},B^{\bullet})= \text{Hom}^i_{D_{\textbf{MHS}_{\mathbb{Q}}}^b}(A^{\bullet},B^{\bullet}),
\end{equation}
from which the natural functor $D^b(\textbf{MHS}_{\mathbb{Q}}) \rightarrow D_{\textbf{MHS}_{\mathbb{Q}}}^b $ is an equivalence of categories. Since $\textbf{MHT}_{\mathbb{Q}}$ is a full subcategory of $\textbf{MHS}_{\mathbb{Q}}$, for $A^{\bullet},B^{\bullet} \in \textbf{MHT}_{\mathbb{Q}}$ we immediately have
\begin{equation}
\text{Hom}^i_{D^b(\textbf{MHT}_{\mathbb{Q}})}(A^{\bullet},B^{\bullet})= \text{Hom}^i_{D_{\textbf{MHT}_{\mathbb{Q}}}^b}(A^{\bullet},B^{\bullet}).
\end{equation} 
Therefore, the proof in \cite{BeilinsonMHC} shows the following proposition.
\begin{proposition} \label{MTHequivalence}
The functor  $D^b(\textbf{MHT}_{\mathbb{Q}}) \rightarrow D_{\textbf{MHT}_{\mathbb{Q}}}^b $ is an equivalence of categories.
\end{proposition}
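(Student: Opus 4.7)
The plan is to leverage Beilinson's equivalence $D^b(\textbf{MHS}_{\mathbb{Q}}) \xrightarrow{\sim} D_{\textbf{MHS}_{\mathbb{Q}}}^b$ recalled just before the statement. Since that equivalence is $t$-exact (it intertwines the standard $t$-structure on $D^b(\textbf{MHS}_{\mathbb{Q}})$ with the $t$-structure built via $\underline{H}^*$ on $D_{\textbf{MHS}_{\mathbb{Q}}}^b$), it restricts to an equivalence between the full subcategory $D^b_{\textbf{MHT}}(\textbf{MHS}_{\mathbb{Q}}) \subset D^b(\textbf{MHS}_{\mathbb{Q}})$ of complexes whose cohomologies lie in $\textbf{MHT}_{\mathbb{Q}}$ and $D_{\textbf{MHT}_{\mathbb{Q}}}^b$. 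Therefore the proposition reduces to showing that the canonical derived functor
\[
\iota\colon D^b(\textbf{MHT}_{\mathbb{Q}}) \longrightarrow D^b_{\textbf{MHT}}(\textbf{MHS}_{\mathbb{Q}})
\]
induced by the exact inclusion $\textbf{MHT}_{\mathbb{Q}} \hookrightarrow \textbf{MHS}_{\mathbb{Q}}$ is an equivalence of triangulated categories.

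The key structural observation is that $\textbf{MHT}_{\mathbb{Q}}$ is a Serre subcategory of $\textbf{MHS}_{\mathbb{Q}}$: it is closed under extensions by Definition \ref{DefnTateHodge}, and closed under subquotients by Lemma \ref{THsubquotient}. One may then invoke the standard homological-algebra fact that for any full abelian subcategory $\mathcal{A} \subset \mathcal{B}$ closed under both extensions and subquotients, the natural functor $D^b(\mathcal{A}) \to D^b_{\mathcal{A}}(\mathcal{B})$ is an equivalence. Applying this with $\mathcal{A} = \textbf{MHT}_{\mathbb{Q}}$ and $\mathcal{B} = \textbf{MHS}_{\mathbb{Q}}$ concludes the argument.

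For completeness I would include a brief sketch of this general fact. Essential surjectivity follows by induction on the length of the cohomological support: given $X^{\bullet} \in D^b_{\textbf{MHT}}(\textbf{MHS}_{\mathbb{Q}})$ with nonzero cohomology in degrees $[a,b]$, the truncation triangle $\tau_{\leq b-1}X^{\bullet} \to X^{\bullet} \to \underline{H}^b(X^{\bullet})[-b] \xrightarrow{+1}$ exhibits $X^{\bullet}$ as the cone of a morphism between objects of strictly smaller cohomological support, and the connecting class lives in an $\operatorname{Ext}$-group which, by the next step, is already visible in $D^b(\textbf{MHT}_{\mathbb{Q}})$. Full faithfulness reduces to the comparison
\[
\operatorname{Ext}^i_{\textbf{MHT}_{\mathbb{Q}}}(A,B) \;\xrightarrow{\sim}\; \operatorname{Ext}^i_{\textbf{MHS}_{\mathbb{Q}}}(A,B)
\]
for $A, B \in \textbf{MHT}_{\mathbb{Q}}$: the cases $i=0,1$ are immediate from full embedding and extension-closure respectively, while higher cases follow by Yoneda dimension-shifting, using subquotient-closure to replace every intermediate term of a long Yoneda extension by its kernel or cokernel image inside $\textbf{MHT}_{\mathbb{Q}}$.

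The main obstacle is precisely this Yoneda dimension-shifting step. Although each individual replacement requires only one of extension- or subquotient-closure, arranging them into a coherent chain of Yoneda equivalences that simultaneously witnesses injectivity and surjectivity of the $\operatorname{Ext}^i$ comparison for all $i \geq 2$ requires some care, and is the reason both closure properties (Definition \ref{DefnTateHodge} and Lemma \ref{THsubquotient}) are genuinely needed rather than just one. Once the Ext-comparison is secured, combining it with the truncation argument yields essential surjectivity and full faithfulness of $\iota$, and the proposition follows from the Beilinson-style reduction outlined in the first paragraph.
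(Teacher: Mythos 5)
Your opening reduction (restrict Beilinson's $t$-exact equivalence $D^b(\textbf{MHS}_{\mathbb{Q}})\simeq D^b_{\textbf{MHS}_{\mathbb{Q}}}$ to the subcategories of complexes with Hodge--Tate cohomology) is fine, but the step you use to finish --- that for \emph{any} Serre subcategory $\mathcal{A}\subset\mathcal{B}$ the functor $D^b(\mathcal{A})\to D^b_{\mathcal{A}}(\mathcal{B})$ is an equivalence --- is not a theorem. That statement is equivalent to $\text{Ext}^i_{\mathcal{A}}(A,B)\to\text{Ext}^i_{\mathcal{B}}(A,B)$ being bijective for all $i$, and for $i\geq 2$ surjectivity requires representing an arbitrary Yoneda $i$-extension in $\mathcal{B}$ by one all of whose \emph{intermediate} terms lie in $\mathcal{A}$; those terms are arbitrary objects of $\mathcal{B}$, not subquotients of anything in $\mathcal{A}$, so Lemma \ref{THsubquotient} gives no purchase on them and the ``Yoneda dimension-shifting'' you invoke does not go through. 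The general claim is in fact false: take $\mathcal{B}$ to be finite-dimensional modules over $kQ/(ba)$, where $Q$ has two vertices with arrows $a\colon 1\to 2$ and $b\colon 2\to 1$, and let $\mathcal{A}$ be the (Serre, extension-closed) subcategory of modules all of whose composition factors are the simple $S_1$ at vertex $1$; then $\mathcal{A}\simeq\text{Vec}_k$ is semisimple while $\text{Ext}^2_{\mathcal{B}}(S_1,S_1)\neq 0$. The sufficient conditions in the literature require an additional cofinality hypothesis beyond the Serre property.

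What actually closes the gap --- and what the paper's proof imports by citing Section 3 of \cite{BeilinsonMHC} --- is the non-formal fact that $\textbf{MHS}_{\mathbb{Q}}$ has cohomological dimension one, i.e.\ $\text{Hom}^i_{D^b(\textbf{MHS}_{\mathbb{Q}})}(A,B)=\text{Hom}^i_{D^b_{\textbf{MHS}_{\mathbb{Q}}}}(A,B)=0$ for $i\geq 2$ and $A,B$ in the heart. Granting this, the Ext-comparison is easy: degrees $0$ and $1$ agree by fullness and by closure under extensions and subobjects; the map $\text{Ext}^2_{\textbf{MHT}_{\mathbb{Q}}}(A,B)\to\text{Ext}^2_{\textbf{MHS}_{\mathbb{Q}}}(A,B)=0$ \emph{is} formally injective for a Serre subcategory (use the Yoneda long exact sequence together with $\text{Ext}^1_{\textbf{MHT}_{\mathbb{Q}}}=\text{Ext}^1_{\textbf{MHS}_{\mathbb{Q}}}$), so $\text{Ext}^2_{\textbf{MHT}_{\mathbb{Q}}}$ vanishes, and splicing an $i$-extension ($i\geq 3$) at the image of its second arrow shows inductively that all higher Ext vanish on both sides. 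Your write-up never uses the degree-$\geq 2$ vanishing in $\textbf{MHS}_{\mathbb{Q}}$, and without that input the proposition cannot be obtained by formal manipulations of Serre subcategories alone.
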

This result is certainly well known, but we can not find a precise reference, therefore we provide a routine proof here. This equivalence is essentially induced by the equivalence $D^b(\textbf{MHS}_{\mathbb{Q}}) \hookrightarrow D^b_{\textbf{MHS}_{\mathbb{Q}}}$.

\newpage

\end{document}